\documentclass[oneside,12pt]{amsart}
\usepackage[utf8]{inputenc}
\usepackage{tikz}
\usepackage{tikz-cd}
\usepackage{enumitem}
\usepackage{stmaryrd}

\usepackage{chngcntr}

\usepackage{mathtools}
\counterwithin{equation}{subsection}

\definecolor{dblue}{rgb}{0.0, 0.0, 0.55}
\definecolor{burgundy}{rgb}{0.5, 0.0, 0.13}
\definecolor{dgreen}{rgb}{0.13, 0.55, 0.13}

\setlength{\textwidth}{6.5in}
\setlength{\textheight}{9in}
\setlength{\topmargin}{0in}
\setlength{\oddsidemargin}{0in}

\usepackage[hidelinks]{hyperref}
\hypersetup{
  colorlinks   = true, %Colours links instead of ugly boxes
  urlcolor     = blue, %Colour for external hyperlinks
  linkcolor    = blue, %Colour of internal links
  citecolor   = blue %Colour of citations
}

\usepackage{amsthm} % math
\usepackage{amsmath}
\usepackage{amssymb}
\usepackage{latexsym}
\usepackage{mathtools}
\usepackage{mathdots}
\usepackage{graphicx}
\usepackage{caption}
\usepackage{subcaption}
\usepackage{mathrsfs}
\usepackage{stmaryrd}

\newtheorem{thm}{Theorem}[subsection]
\newtheorem{lem}[thm]{Lemma}
\newtheorem{rmk}[thm]{Remark}
\newtheorem{pro}[thm]{Proposition}
\newtheorem{cor}[thm]{Corollary}

\newtheorem*{thmen}{Theorem}

\newcommand{\gb}[1]{\operatorname{\textbf {\textup{#1}}}}

\title{Langlands functoriality conjecture for $\gb{SO}_{2n}^*$ \\ \hspace{-0,8cm}in positive characteristic}
\author{H\'ector del Castillo}
%\date{March 2021}

\keywords{Langlands functoriality, Langlands-Shahidi method, quasi-split orthogonal groups}
\subjclass[2010]{11F70, 11F66, 22E55}
\address{Pontificia Universidad Católica de Valparaíso, Blanco Viel 596, Cerro Barón, Valparaíso, Chile} 
\email{hector.math@gmail.com}

\begin{document}
\begin{abstract}
In this article we are concerned with the Langlands  functoriality  conjecture.  Cogdell,  Kim,  Piatetski-Shapiro and Shahidi proved functioriality conjecture in  the  case  of  a  globally  generic  cuspidal automorphic  representation  for  the  split  classical groups,  unitary  groups  or  even  quasi-split  special orthogonal  groups  in  characteristic  zero.  Lomelí extends  this  result  to  split  classical  groups  and unitary groups in positive characteristic. Thus, in this article we prove the Langlands  functoriality  conjecture  for  the  even quasi-split  non-split  special  orthogonal  groups  in positive  characteristic i.e. we lift globally   generic   cuspidal   automorphic representations  of   quasi-split  non-split even  special orthogonal groups  to  generic automorphic representations of suitable general linear groups in positive characteristic. As  an  application  of  this  result,  we  prove  the compatibility  of  the  local gamma  factors  and  the unramified Ramanujan conjecture.
\end{abstract}
\maketitle

 \section*{Introduction}The Langlands program plays an important role in Number Theory and Representation Theory. A crucial aspect of this program is the functoriality conjecture:  let $F$ be a global field with ring of ad\`eles and two connected (quasi-split) reductive groups $\gb G$ and $\gb H$ over $F$.  Let
\[\rho \colon {}^LG \to {}^LH,\]
be a given $L$-homomorphism  between the $L$-groups of ${\bf G}$ and ${\bf H}$, respectively. Then, according to this conjecture, for every  cuspidal automorphic representation $\pi=\bigotimes_x'\pi_x$ of $\gb G(\mathbb A_F)$,  there exists an automorphic representation $\Pi=\bigotimes_x' \Pi_x$ of $\gb H(\mathbb A_F)$ such that, at almost all places $x$ where $\pi_x$ is unramified, $\Pi_x$ is unramified and
its Satake parameter corresponds to the image under $\rho$ of the Satake parameter of $\pi_x$. Such representation will be called a lift of $\pi$.   Furthermore the lift process should respect arithmetic information coming from $\gamma$-factors, $L$-functions and $\varepsilon$-factors,  and lead to a local version of functoriality at the ramified places as well.

%Cogdell, Kim, Piateski-Shapiro, and Shahidi proved Langlands functioriality conjecture for globally generic representations from the split classical groups, unitary groups, or even quasi-split special orthogonal group to the general linear group. They have done this in the context of characteristic zero.
When $\gb G$ is a classical group, ${}^LG$ has a natural representation into ${}^LH$ for a specific general linear group $\gb H={\bf GL}_N$. That case has been studied by many people using different tools. When $F$ is a number field, two main tools have been used: converse theorem and trace formulas. The former was used by Cogdell, Kim, Piatetski-Shapiro and Shahidi  in combination with the Langlands-Shahidi method to prove the conjecture for a globally generic automorphic representation $\pi$ when $\gb G$ is a quasi-split symplectic, unitary or special orthogonal group. For the latter, Arthur and his continuators used trace formulas to get more complete results,  not restricted to globally generic cuspidal automorphic representations of quasi-split groups in characteristic zero.

 Lomel\'i extended the converse theorem method to global function fields, establishing functoriality for globally generic automorphic representations of split classical groups and unitary groups. The present article complements this treatment of the quasi-split classical groups, over a function field $F$, by establishing the functoriality conjecture when $\gb G$ is a quasi-split  non-split even special orthogonal group, that we denote by ${\bf SO}_{2n}^*$, and $\pi$ a globally generic representation.

%As we mentioned before this problem has been central in the work of many people and the main goal of this thesis is to find a transfer in the  case where $\rho$ is a certain embedding of the $L$-group of an even quasi-split special orthogonal group into that of an appropriate  general linear group, and $\pi$ is globally generic.
 \begin{thmen}Let $F$ be a global function field and $\pi=\bigotimes_x
'\pi_x$ a globally generic cuspidal automorphic representation of $\gb{SO}_{2n}^*(\mathbb A_F)$. Then, $\pi$ lifts to an irreducible automorphic representation $\Pi$ of $\gb{GL}_{2n}(\mathbb A_F)$. Furthermore, $\Pi$ can be expressed as an isobaric sum
\[\Pi=\Pi_1 \boxplus \cdots  \boxplus\Pi_d,\]
where each $\Pi_i$ is a unitary self-dual cuspidal automorphic representation of $\gb{GL}_{N_i}(\mathbb A_F)$ for some $N_i$, and where $\Pi_i\not\cong \Pi_j$ for $i\neq j$. Moreover, if we write $\Pi=\bigotimes_x'\Pi_x$, then for every place $x$ of $F$ and every  irreducible  generic  representation $\tau_x$ of $\gb{GL}_m(F_x)$ we have that
\begin{align*}
    \gamma(s,\pi_{x} \times \tau_{x},\psi_x) &=\gamma(s,\Pi_{x} \times \tau_{x},\psi_x),
\end{align*} 
where the $\gamma$-factors on the right are obtained by the Rankin-Selberg method and those on the left by the Langlands-Shahidi method, as extended by Lomel\'i to positive characteristic. 
\end{thmen}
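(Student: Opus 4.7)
The plan is to follow the Cogdell--Kim--Piatetski-Shapiro--Shahidi strategy, adapted to positive characteristic by Lomel\'i, and to invoke the Converse Theorem of Cogdell and Piatetski-Shapiro with a set of twists by generic cuspidal representations of $\gb{GL}_m(\mathbb A_F)$ for $m$ in a bounded range. The first input is the Langlands--Shahidi machinery: realize $\gb{GL}_m\times\gb{SO}_{2n}^*$ as a standard maximal Levi of an ambient quasi-split orthogonal group and, using Lomel\'i's extension to function fields, extract local factors $\gamma(s,\pi_x\times\tau_x,\psi_x)$, $L(s,\pi_x\times\tau_x)$, $\varepsilon(s,\pi_x\times\tau_x,\psi_x)$ together with the global functional equation arising from the functional equation of the corresponding Eisenstein series. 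Holomorphy in vertical strips and boundedness in finite order, which are the analytic hypotheses of the converse theorem, follow from the properties of these Eisenstein series and the cuspidality of $\pi$.

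The second step is to assemble a candidate lift $\Pi=\bigotimes_x'\Pi_x$ and apply the converse theorem. At unramified places $x$, I would define $\Pi_x$ as the unramified representation with Satake parameter $\rho(c(\pi_x))$, where $\rho\colon {}^L\gb{SO}_{2n}^*\to\gb{GL}_{2n}(\mathbb C)$ is the standard embedding, paying attention to the outer-automorphism twist on the non-identity component of the $L$-group coming from the defining quadratic extension $E/F$. At the finitely many ramified places, I would invoke stability of local $\gamma$-factors under highly ramified twists: after twisting $\pi$ by a suitably ramified id\`ele class character, one can choose $\Pi_x$ at the bad places so that the Rankin--Selberg $\gamma$-factors of $\Pi_x\times\tau_x$ agree with the Langlands--Shahidi $\gamma$-factors of $\pi_x\times\tau_x$ for every $\tau$ in the chosen converse-theorem family. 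The converse theorem then produces an automorphic $\Pi$ of $\gb{GL}_{2n}(\mathbb A_F)$ realizing these local components.

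The third step refines $\Pi$ and extends the local matching. By the Jacquet--Shalika--Moeglin--Waldspurger classification of the discrete spectrum of $\gb{GL}_N$, which is available over function fields through Lafforgue and has already been used by Lomel\'i in the split case, one writes $\Pi=\Pi_1\boxplus\cdots\boxplus\Pi_d$ as an isobaric sum of unitary cuspidal $\Pi_i$ of $\gb{GL}_{N_i}(\mathbb A_F)$. Self-duality of each $\Pi_i$ and the inequalities $\Pi_i\not\cong\Pi_j$ for $i\neq j$ are forced by computing the order of the pole of $L(s,\Pi\times\Pi^{\vee})$ at $s=1$ in two ways: through the isobaric decomposition, and through the Langlands--Shahidi factorization into exterior-square and symmetric-square $L$-functions that reflect the self-contragredience of $\pi$. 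The equality of local $\gamma$-factors at ramified places for arbitrary generic $\tau_x$ then follows from a local multiplicativity-and-stability argument reducing $\tau_x$ to its supercuspidal support.

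The main obstacle I anticipate is twofold: first, the stability of $\gamma$-factors for the non-split even orthogonal groups in positive characteristic, which must be established either by a direct Bessel-model calculation or by deduction from cases already treated by Lomel\'i; second, the bookkeeping of the Galois component of ${}^L\gb{SO}_{2n}^*$, which intervenes when defining unramified lifts at inert places, when verifying that the Satake parameters of the $\Pi_i$ land in an orthogonal (and possibly disconnected) subgroup of $\gb{GL}_{2n}(\mathbb C)$, and when assigning the correct self-duality type to each isobaric summand. Once these two points are secured, the remaining global combinatorics and local matching proceed along the lines of CKPS in characteristic zero and of Lomel\'i in the split function-field setting.
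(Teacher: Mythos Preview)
Your overall architecture is right and matches the paper's: Langlands--Shahidi factors from the Levi $\gb{GL}_m\times\gb{SO}_{2n}^*$ inside $\gb{SO}_{2(m+n)}^*$, a candidate lift, stability at bad places, the converse theorem, then refinement of the resulting automorphic representation. Two steps, however, are misdescribed in a way that would not go through as written.

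First, the analytic input to the converse theorem over function fields is not ``holomorphy in vertical strips and boundedness in finite order'' but that the twisted $L$-functions are \emph{polynomials} in $q^{-s}$. The paper spends considerable effort here: one needs the standard module conjecture together with a bound $r_b<1$ on the Langlands exponents of each $\pi_x$, and these feed into holomorphy and non-vanishing of the \emph{normalized} local intertwining operators on $\operatorname{Re}(s)\ge 1/2$, which in turn give holomorphy of the completed $L$-function on $\operatorname{Re}(s)\ge 1/2$ via the global intertwining operator. The bound $r_b<1$ is itself obtained by a local--global bootstrap, not a purely local argument.

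Second, your mechanism for unitarity, distinctness and self-duality of the $\Pi_i$ is not the one that works. There is no usable factorization of $L(s,\Pi\times\Pi^\vee)$ into exterior- and symmetric-square pieces at this stage: $\Pi$ is not yet known to be self-dual, and the $\mathrm{Sym}^2/\wedge^2$ dichotomy describes the \emph{expected} image, not a tool available to prove these properties. What the paper actually proves is that for every unitary cuspidal $\tau$ on $\gb{GL}_m$, the partial $L$-function $L^S(s,\pi\times\tau)$ is holomorphic and non-vanishing on $\operatorname{Re}(s)>1$ with at most a simple pole at $s=1$. This comes from holomorphy of the global intertwining operator on $\operatorname{Re}(s)>1$, which is forced by a non-unitarity statement for unramified constituents of $i_P^{\gb{SO}_{2N}}(|\det|^{s}\tau_x\otimes\pi_x)$ at places \emph{split} in $E/F$. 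One then sets $\tau=\tilde\Pi_i$, uses the unramified matching $L^S(s,\pi\times\tilde\Pi_i)=L^S(s,\Pi\times\tilde\Pi_i)$, and reads off unitarity (by forcing the twist exponents to zero), then distinctness (simple pole), then self-duality (if $\Pi_i\not\cong\tilde\Pi_i$ then $\tilde w_0\sigma\not\cong\sigma$, so $L^S$ is holomorphic on $\operatorname{Re}(s)>1/2$, contradicting the Rankin--Selberg pole at $s=1$).

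Finally, your last sentence conflates two different local arguments. Multiplicativity does reduce the $\gamma$-identity to supercuspidal $\tau_{x_0}$, but stability plays no role there. For supercuspidal $\tau_{x_0}$ one globalizes it to a cuspidal $\tau$ unramified off $S$, chooses an id\`ele class character $\eta$ that is trivial at $x_0$ and highly ramified at the other places of $S$, and then compares the two global functional equations (Langlands--Shahidi for $\pi$, Rankin--Selberg for $\Pi$): they agree at every place except possibly $x_0$, which forces the identity there.
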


As in Cogdell, Kim, Piatetski-Shapiro and Shahidi, and Lomel\'i, the method of proof uses the converse theorem and $L$-functions to construct an automorphic representation of $\gb{GL}_n(\mathbb A_F)$: we provide a proof of a twisted version in positive characteristic of the converse theorem of Cogdell and
Piatetski-Shapiro. To apply the converse theorem, one needs analytic properties of the Langlands-Shahidi $L$-functions, and to establish them we adapt Lomelí's arguments to our new case. We first obtain a  lift that has the desired properties at almost all places. Then further properties of partial $L$-functions give that there is a lift which is an isobaric sum of unitary cuspidal automorphic representations. We prove the compatibility between the local $\gamma$-factors of $\pi$ and the lift $\Pi$ at all places. Our lift is close to what is known as the strong lift, which states the compatibility of $L$-functions and $\varepsilon$-factors.

As an application of the functoriality and the validity of the Ramanujan conjecture for general linear groups established by L. Lafforgue, we prove the unramified Ramanujan conjecture for globally generic cuspidal automorphic representations of our classical group in positive characteristic. A strong lift would imply the Ramanujan conjecture at all places.
\begin{thmen}
Let $\pi=\bigotimes_x'\pi_x$ be a globally generic cuspidal automorphic representation of $\gb{SO}_{2n}^*(\mathbb A_F)$. Then, if $\pi_x$ is unramified, its Satake parameters have absolute value 1.
\end{thmen}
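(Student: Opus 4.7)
The plan is to combine the functoriality theorem established above with Lafforgue's proof of the Ramanujan conjecture for $\gb{GL}_N$ over function fields. The strategy is to transfer the problem to the general linear side, where Ramanujan is known, and then pull the absolute value statement back to $\pi$ at any unramified place.

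First, I would apply the functoriality theorem to obtain a lift $\Pi = \Pi_1 \boxplus \cdots \boxplus \Pi_d$, with each $\Pi_i$ a unitary self-dual cuspidal automorphic representation of $\gb{GL}_{N_i}(\mathbb{A}_F)$, satisfying $N_1 + \cdots + N_d = 2n$. By Lafforgue's theorem, whenever $\Pi_{i,y}$ is unramified at a place $y$, its Satake parameters lie on the unit circle. Next, fix an unramified place $x$ of $\pi$ and consider the local $\gamma$-factor identity
\[ \gamma(s, \pi_x \times \tau_x, \psi_x) = \gamma(s, \Pi_x \times \tau_x, \psi_x), \]
valid for every irreducible generic representation $\tau_x$ of $\gb{GL}_m(F_x)$. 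Since $\pi_x$ is unramified, the left-hand side has the standard unramified Langlands-Shahidi shape determined by the Satake parameter of $\pi_x$. Combined with the multiplicativity of $\gamma$-factors along the isobaric sum $\Pi_x = \Pi_{1,x} \boxplus \cdots \boxplus \Pi_{d,x}$ and a local characterization of unramified generic representations of $\gb{GL}_{N_i}(F_x)$ by their twisted $\gamma$-factors, this forces each $\Pi_{i,x}$ to be unramified, with Satake data matching the corresponding block of $\rho$ applied to the Satake parameter of $\pi_x$. Concatenating across $i$ and pulling back through $\rho$, the Satake parameters of $\pi_x$ are then all of absolute value $1$.

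The main obstacle I foresee is precisely this local step. The global functoriality theorem yields the Satake correspondence only at almost all places, so at an individual unramified place of $\pi$ the unramifiedness of each $\Pi_{i,x}$ must be recovered from the twisted $\gamma$-factor identity. This requires knowing that such $\gamma$-factors separate unramified from ramified irreducible generic representations of $\gb{GL}_N$ over a local function field, which in our setting is available through the Rankin-Selberg theory together with Lomelí's positive-characteristic Langlands-Shahidi framework. Granted this, Lafforgue's Ramanujan theorem immediately closes the argument.
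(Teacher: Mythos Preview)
Your overall strategy matches the paper's: transfer to $\gb{GL}_{2n}$, apply Lafforgue's Ramanujan theorem to the cuspidal constituents $\Pi_i$, and read the absolute values back on the orthogonal side. The difference lies in how you handle a fixed unramified place $x$.

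You worry that functoriality only gives the Satake correspondence at \emph{almost all} places, and therefore propose recovering the unramifiedness of $\Pi_x$ (and of each $\Pi_{i,x}$) from the $\gamma$-factor identity of Theorem~\ref{gamlocal} together with a Henniart-type local characterization. That route can be made to work, but in this paper it is an unnecessary detour. The weak lift (Theorem~\ref{thm:weak}) produced by the converse theorem already matches the candidate lift at every place outside the auxiliary finite set $S$, and $S$ may be chosen to avoid any prescribed unramified place of $\pi$; varying $S$ and invoking the uniqueness of the isobaric lift (Remark~\ref{rmk:Lift}) removes any residual ambiguity. Hence whenever $\pi_x$ is unramified, $\Pi_x$ is \emph{automatically} unramified with Satake parameter $\rho^*_{2n,x}\circ\phi_{\pi_x}$, by construction rather than by a $\gamma$-factor argument. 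The paper then simply notes that the resulting eigenvalues $\alpha_1,\ldots,\alpha_{n-1},1,1,\alpha_{n-1}^{-1},\ldots,\alpha_1^{-1}$ are distributed among the Satake parameters of the $\Pi_{i,x}$, each of which is tempered at \emph{every} place by Lafforgue, forcing $|\alpha_j|=1$.

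So the local obstacle you anticipate has already been absorbed into the statement of the lift; your proposed use of the $\gamma$-factor identity would essentially re-prove a piece of Theorem~\ref{thm:weak} that is already available.
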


{\bfseries Acknowledgements}.  I'm grateful to Guy Henniart and Luis Lomel\'i for guiding and supporting me  through out the whole creation process of this article. I would also like to thank to both institutions that hosted my Ph.D. studies, in which this article was developed: Université Paris-Saclay and Pontificia Universidad Cat\'olica de Valpara\'iso. Finally, I want to thank the National Agency for Research and Development (ANID, Chile) for funding the Ph.D. project via the National Doctoral Scholarship 2017 21171037.

\section{Notation} We let $F$ be a function field in one
variable over a finite field or a  locally compact field of positive characteristic. Whenever $F$ is a function field in one
variable over a finite field, we denote by $|F|$ the set of places of $F$, by $F_x$ the completion at a place $x\in |F|$ and by $q_F$ or simply by $q$ the cardinality of its field of constants. We also denote the ring of ad\`eles of $F$ by $\mathbb A_F$. When $F$ is a locally compact field of positive characteristic, we denote by $q_F$ the cardinality of its residue field and by $|\cdot|_F$ or simply by $|\cdot|$ the absolute value of $F$. 

For an arbitrary algebraic group ${\bf H}$ over $F$, we denote ${\bf H}(R)$ its set of $R$-points, where $R$ is a $F$-algebra (usually, $R$ will be $F$ or $\mathbb A_F$). In order to reduce the size of some indices, we sometimes denote ${\bf H}(F)$ simply by $H$.

Let $\gb G$ be a quasi-split reductive group over $F$ with a maximal split subtorus $\gb S$. Fix a Borel subgroup ${\bf B}$ containing ${\gb S}$.  We denote the set of (relative) simple roots of ${\bf S}$ in ${\bf B}$ by $\Delta$. The Weyl group $N_{\gb G}(\gb S)/Z_{\bf G}({\bf S})(F)$ is denoted by $W^{\bf G}$.

If ${\bf P}$ is a parabolic subgroup of ${\bf G}$ containing ${\bf S}$, then the relation ${\bf P}={\bf M}{ \bf N}$ will also mean that ${\bf M}$ and ${\bf N}$ are the Levi subgroup containing ${\bf S}$ and ${\bf N}$ is the unipotent radical of ${\bf P}$ respectively. Furthermore, we recall that there is an inclusion preserving bijection between subsets of $\Delta$ and parabolic subgroups containing ${\bf B}$. We denote the parabolic subgroups associated to $\theta\subset \Delta$ via that bijection by ${\bf P}_\theta$.

Suppose that $F$ is a locally compact field of positive characteristic. Let ${\bf P = MN}$ be a parabolic subgroup of ${\bf G}$ containing ${\bf B}$, then we denote by $i^G_P$ or $i_{{\bf P}(F)}^{{\bf G}(F)}$, the normalized parabolic induction functor from ${\bf P}(F)$ to ${\bf G}(F)$. More precisely, if $(\sigma,W)$ is a smooth representation of $M$, then we denote by $i_P^G(W)$ the space of all locally constant functions $f:G\to W$ such that  $f(mng)=\delta^{-1/2}_P(m)\sigma(m)f(g)$ for $m\in M,n\in N,g\in G$, where $\delta_P$ is the modulus character of $P$. The action of $G$ on $i_P^G(W)$ is given by $g\cdot f(x)=f(xg)$.
\section{Quasi-split even special orthogonal group}We start with the general construction of $\gb{SO}(q)$, for an even dimensional non-degenerate quadratic space $Q=(V,q)$. Two references for this section are \cite[Appendix C]{Bgrp} and \cite{Kn}.
\subsection{Definitions}\label{subsec:SO*}
 First we recall some definitions: for an $F$-algebra $R$, an $R$-quadratic space is a pair $(V,q)$ consisting of a finite free $R$-module $V$  and a quadratic form $q:V\to R$  i.e.
\begin{enumerate}
    \item $q(rv)=r^2q(v)$, for all $r\in R$ and $v\in V$,
    \item the map $B_q\colon V\times V \to  R$, defined by $B_q(x,y)=q(x+y)-q(x)-q(y)$, is $R$-bilinear. 
\end{enumerate} 
The orthogonal group $\gb O(q)$ for a general $F$-quadratic space $(V,q)$ over $F$, is a closed subscheme of $\gb{GL}(V)$, which represents the functor
\[R\mapsto \{g\in \gb{GL}(V\otimes_FR)\colon q_{R}(gx)=q_{R}(x) \text{ for all } x\in V\otimes_FR\}.\]
The special orthogonal group $\gb{SO}(q)$ is defined as the kernel of the Dickson morphism $D_q$. The Dickson morphism $D_q$ is a smooth surjection, identifying $\mathbb{Z}/2\mathbb{Z}$ with $\gb O(q)/\gb{SO}(q)$. The special orthogonal group $\gb{SO}(q)$ is connected, smooth and reductive of dimension $n(n-1)/2$. Its center is the group of $2$-root of unity $\mu_2$ as a group scheme.

We  consider two families of quadratic spaces: for $n\geq 1$, let $Q_n=(F^{2n},q_n)$ be the quadraitc space with
\[q_n(x_1,\dots,x_{2n})= x_1x_{2n}+\cdots+ x_{n}x_{n+1}. \]
And for a separable quadratic extension $E$ of $F$ and $n\geq 1$ let $Q_{E,n}=(F^{n-1}\oplus E \oplus F^{n-1}, q_{E,n})$, where \[q_{E,n}(x_1,\dots,x_{n-1},x,x_{n+2},\dots,x_{2n})=x_1x_{2n}+\cdots+ x_{n-1}x_{n+2}+\operatorname{N}_{E/F}(x)\]
 and $\operatorname{N}_{E/F}$ is the norm map.

To simplify the notation, when the base field $F$ is clear from  context, we let \[\gb{SO}_{2n}\coloneqq\gb{SO}(q_{n}).\]
Also, when the separable quadratic extension $E$ of $F$ is clear from  context, we let \[\gb{SO}_{2n}^*\coloneqq \gb{SO}(q_{E,n}).\]

\begin{rmk}\label{rmk:split&odd}
\begin{itemize}\item[]
    \item If we allow $E$ to be $F\times F$, we get that $\gb{SO}(q_{F\times F,n})\cong \gb{SO}_{2n}$. In this case we have that $N_{F\times F/F}(x,y)=(xy,xy)$.
    \item There is a similar construction of special orthogonal groups for odd dimensional non-degenerate quadratic spaces. In particular, we get the (split) odd special orthogonal groups ${\bf SO}_{2n+1}$.
\end{itemize}
\end{rmk}
\subsection{Structure} \label{section:structure} We now recall some structural properties of the group ${\bf SO}_{2n}^*={\bf SO}_{2n}(q_{E,n})$. First, since $E\otimes_F E\cong E\times E$ and Remark \ref{rmk:split&odd}, we have that the group $\gb{SO}(q_{E,n})$ is an $F$-form of the split group ${\bf SO}_{2n}$, which splits over $E$. Thus, the absolute root system is of type $D_{n}$. 

   We note that the orthogonal decomposition $Q_{E,n}=H_1\perp \cdots \perp H_{n-1}\perp (E,\operatorname{N}_{E/F})$, for $n-1$ hyperbolic planes $H_i=(F e_i\oplus Fe_{2n-i+1}, x_ix_{2n-i+1})$ and an (anisotropic) non-degenerate quadratic space $(E,\operatorname N_{E/F})$, allows us to obtain the following subtori of $\gb{SO}(q_{E,n})$
\[\gb S\cong \prod_{i=1}^{n-1} \gb{SO}(H_i) \quad \& \quad \gb T\cong \gb S\times\gb{SO}(E,\operatorname N_{E/F}).\]
Since the dimension of $\gb S$ is $n-1$ and the dimension of $\gb T$ is $n$, we get that $\gb S$ is a maximal split $F$-torus of $\gb{SO}(q_{E,n})$  and $\gb T$ is a maximal $F$-torus of $\gb{SO}(q_{E,n})$. 

The $F$-points of $\gb{SO}_2$ and $\gb{SO}(E,N_{E/F})$ can be identified with the multiplicative group $F^\times$ and the group $E^1$ of norm one elements of $E^\times$,  respectively. %\cite[Example C.6.1]{Bgrp}.
More precisely, we can describe the action of $t=(x_1,\dots,x_{n-1},x)\in (F^\times )^{n-1}\times  E^1\cong \gb T(F)$ on  the $F$-vector space $Q_{E,n}$ by $t\cdot e_i=x_ie_i$ for $1\leq i\leq n-1$, $t\cdot l= xl$ for $l \in E$ and $t\cdot e_{2n-i+1}=x_i^{-1}e_{2n+1-i}$ for $1\leq i \leq n-1$.
Moreover, \[Z_{\gb G}(\gb S)=\gb T.\]

We also note that,  by restricting $q_{E,n}$ to $H_1\perp \cdots \perp H_{n-1}\perp F\cdot 1$, we get the split quadratic form of $2(n-1)+1$ variables
\[x_1x_{2n}+\cdots+ x_{n-1}x_{n+2}+x^2.\]
Thus, we get a copy of $\gb{SO}_{2n-1}\cong\gb{SO}(W\perp F\cdot 1)$ inside $\gb{SO}(q_{E,n})$, where $W=H_1\perp \cdots \perp H_{n-1}$. And, the (relative) root system $\Phi(\gb{SO}(q_{E,n}),\gb S)=\Phi(\gb{SO}(W\perp F\cdot 1),\gb S)$ is of type $B_{n-1}$.

Let $\gb B$ be the Borel subgroup of $\gb{SO}(q_{E,n})$ containing $\gb S$ and stabilizing the standard flag $(W_{\text{std}},F_{\text{std}})$, where $W_{\text{std}}=\operatorname{span}(e_0,\dots,e_{n-1})$ and $F_{\text{std}}=\{Fe_1,Fe_2 \oplus Fe_1,\dots, W_{\text{std}} \}$.  We have that
\[\gb B=\gb T\ltimes R_{u,F}(\gb B),\]
 %where $R_{u,F}(\gb P_0)$ is the radical unipotent subgroups, 
  where $R_{u,F}(\gb B)$ is the radical unipotent subgroup of ${\bf B}$, and that $\gb{SO}(q_{E,n})$ is quasi-split.

\subsection{Relative rank one}
 We relate $\operatorname{Res}_{E/F}\gb{SL}_2$ to the simply connected cover of $\gb{SO}(q_{E,2})$. Let $\operatorname{Gal}(E/F)=\{1,\sigma\}$. Then, we note that $Q_{E,2}$ is isomorphic to the quadratic  space $(H,q)$ of Hermitian $2\times 2$ matrices, i.e. $2\times2$ matrices $(x_{i,j})$ with entries in $E$ such that $x_{i,j}=\sigma(x_{j,i})$ for  every $1\leq i,j\leq 2$, with quadratic form $q=-\det$, via 
 \[(x_1,x_2,x)\mapsto \begin{pmatrix}
 -x_1 & x \\
 \sigma(x) & x_2
 \end{pmatrix}.\] Furthermore, we have an action of $\operatorname{Res}_{E/F}\gb{SL}_2$ on $H$, via 
 \[a \mapsto g a g^*,\]
 where $g^*={}^t\sigma(g)$. We observe that $q(a)=q(gag^*)$ and $\det(a \mapsto g a g^*)=1$. Thus the action gives us a morphism 
 \[\operatorname{Res}_{E/F}\gb{SL}_2 \to \gb{SO}'(q)=\ker (\det|_{\gb O(q)}).\]
 As $\operatorname{Res}_{E/F}\gb{SL}_2$ is connected and $\gb O(q)/\gb{SO}(q)=\mathbb Z/2\mathbb Z $, this morphism factors through $\gb{SO}(q)\cong \gb {SO}(q_{E,2}) $. Finally as the kernel of this morphism is $\mu_2$ and the dimensions of the groups $\gb {SO}(q_{E,2})$ and $\operatorname{Res}_{E/F}\gb{SL}_2$ are the same, we have
 \[1\to \mu_2\to \operatorname{Res}_{E/F}\gb{SL}_2\to \gb{SO}(q_{E,2})\to 1.  \label{rnk1}\]

\subsection{$L$-group} We finish this section by describing the $L$-groups of ${\bf SO}^*_{2n}$, ${\bf SO}_{2n}$ and ${\bf GL}_m$. For that, let us fix a separable closure $F_s$ of $F$ and a quadratic separable extension $E$ of $F$ contained in $F_s$. Denote  $\Gamma_F=\operatorname{Gal}(F_s/F)$. Finally, we denote by ${\bf SO}(q)_{F_s}$ and ${\bf T}_{F_s}$ the extension of scalar to $F_s$ of ${\bf SO}(q)$ and ${\bf T}$, respectively.

Let us consider the split special orthogonal group $\gb{SO}_{2n}$ over $\mathbb C$. We choose a split maximal torus $\gb T_n$ and a Borel subgroup $\gb B_n$ in the following way
\[\gb T_n(\mathbb C)=\{t=\operatorname{diag}(t_1,\dots,t_{n},t_{n}^{-1},\dots,t_1^{-1})\colon t_i \in \mathbb C^\times, 1\leq i\leq n\}\]
and
\[\gb B_n=\gb T_n\rtimes\{M(u) h(L)\colon v^tLJ^tv=0 \text{ for all }v\in \mathbb C^n\},\]
the subgroup of upper triangular matrices in $\gb{SO}_{2n}$, where $J = (\delta_{i,n+1-i})$ with $\delta$ the Kronecker's delta is the  $n\times n$ matrix with $1$'s along the anti-diagonal, and for upper triangular unipotent $u\in\gb{GL}_n$ and $L$ an $n\times n$ matrix,
\[M(u)=\begin{pmatrix}u & 0_n \\
0_n & J({}^tu)^{-1}J
\end{pmatrix} \quad \& \quad h(L)\coloneqq \begin{pmatrix}1_n & L \\
0_n & 1_n
\end{pmatrix} .\]
We observe that the root datum associated to $(\gb{SO}_{2n},\gb T_n)$ is isomorphic to the dual root datum $R^\vee$ of  $(\gb{SO}(q_{E,n})_{F_s},\gb T_{F_s})$. We choose a pinning of $(\gb{SO}_{2n},\gb T_n,\gb B_n)$ corresponding  to the based root data  $(R^\vee,\Delta^\vee)$ in the following manner, \[X_{\alpha_i^\vee}=\begin{pmatrix}E_{i,i+1} & 0_n \\
0_n & -E_{i,i+1}
\end{pmatrix}
\in \operatorname{Lie}(\gb B_n) \text{ for } 1\leq i \leq n-1,\]  
where $E_{i_0,j_0}=(\delta_{i_0,j_0})$, %\in \gb{Mat}_n$
 and
\[X_{\alpha_n^\vee}=h\begin{pmatrix}  &&0_{n-2}\\
1 & \phantom{-}0& \\
0&-1& &\end{pmatrix}\in \operatorname{Lie}(\gb B_n).\]
Finally, using the equivalence of categories given in \cite[Theorem 6.1.17]{Bgrp}, we  identify $\gb{SO}_{2n}$ over $\mathbb C$ with the dual group of ${\bf SO}_{2n}$ over $F$. We thus  fix this identification.

(\textit{The $*$-action}). Since $\gb{SO}_{2n}^*=\gb{SO}(q_{E,n})$ is non-split, we have a non-trivial action of $\Gamma_F$ on $(R^\vee,\Delta^\vee)$. Moreover, if we denote  
\[w=\begin{pmatrix}1_{n-1} & & & \\
 &0 & 1 & \\
 & 1&0 & \\
 &  & &1_{n-1}
\end{pmatrix},\]
we have that $(g\mapsto wgw^{-1})\in\operatorname{Aut}(\gb{SO}_{2n},\gb T_n,\{X_a\}_{a\in \Delta^\vee})$ corresponds, via the equivalence of categories \cite[Theorem 6.1.17]{Bgrp}, to the non-trivial automorphism in $\operatorname{Aut}(R^\vee,\Delta^\vee)$. Thus, the induced action of $\Gamma_F$ is given by 
\begin{align*}
    \varphi \colon \Gamma_F&\to \operatorname{Aut}(\gb{SO}_{2n}) \\
    \tau &\mapsto \begin{cases}(g\mapsto wgw^{-1}) &\text{ if } \tau \not \in \operatorname{Gal}(F_s/E)\\
  (g \mapsto g) &\text{ if } \tau \in \operatorname{Gal}(F_s/E)\end{cases}.
\end{align*}
Putting these identifications together, we have that the $L$-group of ${\bf SO}_{2n}^*$ over $F$ can be identified with the following semidirect product
\[\gb{SO}_{2n}(\mathbb C)\rtimes_\varphi \Gamma_F.\]

In the case of the split groups ${\bf GL}_m$ and ${\bf SO}_{2n}$ over $F$, their $L$-groups can be identified with the following direct product
\[{\bf GL}_m(\mathbb C)\times \Gamma_F \quad(\text{resp. } {\bf SO}_{2n}(\mathbb C)\times \Gamma_F).\]
\section{Langlands functoriality and Converse theorem}
 In this section we state the main result of this article and we start with the first steps of the proof of generic functoriality. The strategy is inspired from \cite{Cogqsplit} and \cite{L15}.

Suppose that $F$ is a function field in one
variable over a finite field. We let for $x\in |F|$, $\gb{G}_x= \gb{G}_{F_x}$ the  group obtained from $\gb G$ by extending scalars along the inclusion $F\hookrightarrow F_x$. We choose a separable closure $F_{x,s}$ of $F_x$ and an embedding $F_s\hookrightarrow F_{x,s}$  for every $x\in |F|$ that extends $F\hookrightarrow F_x$. We write $\Gamma_F=\operatorname{Gal}(F_s/F)$, $\Gamma_{F_x}=\operatorname{Gal}(F_{x,s}/F_x)$ for every $x\in |F|$ and we let $I_{F_x}$ be the inertia subgroup of $\Gamma_{F_x}$, for every $x\in |F|$. These choices give us an injection $\Gamma_{F_x} \hookrightarrow \Gamma_F$.

Now the restriction along $\Gamma_{F_x}\hookrightarrow \Gamma_F$ induces a (continuous) group homomorphism 
from ${}^LG_x$ to ${}^LG$, that fits  into a commutative diagram
\[\begin{tikzcd}{}^LG_x \arrow[d]\arrow[r] & {}^LG. \arrow[d] \\
\Gamma_{F_x} \arrow[r]& \Gamma_F 
\end{tikzcd}\]
 Now given an $L$-homomorphism $\rho\colon {}^{L}G \to {}^{L}H$, for every place $x$ we can form an $L$-homomorphism   $\rho_x\colon {}^LG_x \to {}^LH_x$. 

\subsection{Satake parametrization} For every $x\in |F|$, we denote by $W_{F_x}$ the Weil group of $F_x$. We fix a geometric Frobenius element $\mathrm{Fr}_x\in W_{F_x}$. We denote by $\Phi(G_x)$ the set of group morphisms \cite[Section 8]{CorB} \[\phi\colon W_{F_x}'=W_{F_x}\times \operatorname{SL}_2(\mathbb C)\to {}^LG_x,\]
such that $\phi(\operatorname{Fr}_x)$ is semi simple, $\phi|_{I_{F_x}}$ is continuous, $\phi|_{\operatorname{SL}_2(\mathbb C)}$ is algebraic and $\phi$ is relevant, i.e. if the image of $\phi$ is contained in a Levi subgroup of ${}^LG_x$ then it is the $L$-group of a Levi subgroup of $\gb G_x$, modulo ${}^L
G_x^\circ$-conjugacy classes of parameters.
Moreover, when $\phi|_{I_{F_x}}$  and $\phi|_{\operatorname{SL}_2(\mathbb C)}$ are trivial, $\phi$ will be called unramified. We denote by $\Phi_{unr}(G_x)$ the set of these classes.

Thus, we also obtain  a bijection between unramified representations and unramified parameters \cite[Section 9.5]{CorB},
\begin{align*}
    \Pi_{unr}(G_x) &\to \Phi_{unr}(G_x).\\
    \pi &\mapsto  \phi_\pi
\end{align*}

\subsection{Functoriality conjecture}
Let \[\rho\colon {}^LG\to {}^LH\] be an $L$-homomorphism  between the $L$-groups of a reductive group ${\bf G}$ and a quasi-split reductive group ${\bf H}$ over $F$, respectively. Let $\pi=\bigotimes_x'\pi_x$ be a cuspidal automorphic representation  of $\gb G(\mathbb A_F)$. A (weak) lift or a transfer of $\pi$ through $\rho$ is an automorphic representation $\Pi=\bigotimes_x' \Pi_x$  of $\gb H(\mathbb{A}_F)$, such that  the following commutative diagram commutes
\[\begin{tikzcd}^{L}G_x \arrow[rr, "\rho_x"]& & ^{L}H_x \\
&W'_{F_x} \arrow[ul,"\phi_{\tau_x}"] \arrow[ur,"\phi_{\pi_x}"'] &
\end{tikzcd}\]
for all places $x$ such that $\gb H_x,\gb G_x,\pi_x$ and $\Pi_x$ are unramified. When  $\rho$ is clear from the context we will just say that it is a lift or a transfer of $\pi$. The functoriality conjecture states that such $\Pi$ always exists, for every  cuspidal automorphic representation $\pi$ and $L$-homomorphism $\rho$.

Our main objective  is to prove the existence of such a transfer  for ${\bf G}={\bf SO}_{2n}^*$, ${\bf H}={\bf GL}_{2n}$, $\pi$ globally generic and $\rho$ given by 
\begin{align}
  \rho_{2n}^*: \gb{SO}_{2n}(\mathbb{C})\rtimes \Gamma_F &\to \gb{GL}_{2n}(\mathbb{C})\times \Gamma_F  \nonumber \\
  (g,\tau)&\mapsto \begin{cases}(gw,\tau) &\text{ if } \tau \not \in \operatorname{Gal}(F_s/E)\\
  (g,\tau) &\text{ if } \tau \in \operatorname{Gal}(F_s/E)\end{cases}\label{L-hom}. 
\end{align}

\subsection{Candidate lift}\label{subsec:candidate}
For every place $x$ we choose a character $\lambda_x$ of $\gb T(F_x)$ such that for $\pi_x$ unramified it is the character obtained by the Satake parametrization  and for $\pi_x$ ramified, $i_{B}^{\gb{SO}_{2n}^*(F_x)}(\lambda_x)$ has an (irreducible) generic subquotient $\pi'_{\lambda_x}=\pi_x'$ with the same central character as $\pi_x$ (See for example \cite[Section 4.2]{Cogqsplit}). Applying the local Langlands correspondence for tori, from $\lambda_x$ we get  $\phi_{\lambda_x}:W_{F_x}'\to \gb T_n(\mathbb C)\rtimes \Gamma_{F_x}$.

Let $i_x:\gb T_n(\mathbb C)\rtimes \Gamma_{F_x}\hookrightarrow \gb{SO}_{2n}(\mathbb{C})\rtimes \Gamma_{F_x}$ be the inclusion homomorphism. Then, applying the local Langlands correspondence for general linear groups to \[\rho^*_{2n,x}\circ i_x \circ \phi_{\lambda_x}\colon W_{F_x}'\to \gb{GL}_{2n}(\mathbb C)\times \Gamma_{F_x},\]we find an admissible representation $\Pi_x$ of $\gb{GL}_{2n}(F_x)$.
We put $\Pi=\bigotimes_x'\Pi_x$, which is an irreducible admissible representation of $\gb{GL}_{2n}(\mathbb{A}_F)$. We call $\Pi$  (resp. $\Pi_x$) a candidate lift or candidate transfer of $\pi$ (resp. $\pi_x$). The representation $\Pi$ is not necessarily a transfer because it is not necessarily an automorphic representation of ${\bf GL}_{2n}(\mathbb A_F)$. But, together with the converse theorem of Section \ref{subsec:converse}, we will use it  in order to construct in Section \ref{section:genericfunc} the desired lift.

\subsection{A description of $\Pi_x$}\label{subsec:description}Now for every place $x$, we will give a description of $\Pi_x$.   First, let us note that by definition \[\gb{SO}_{2n}^*(F_x) =\gb{SO}(q_{E_x,n})(F_x)
,\] where $E_x\coloneqq E\otimes_F F_x$ is a degree two \'etale algebra over $F_x$. Thus, it is either a product of two (separable and trivial) fields  extensions or a (separable) field extension over $F_x$.  Let us concentrate on the case when $E_x$ is a quadratic (separable) extension of $F_x$ (i.e. $x$ is an inert place), for which we have an embedding $E_x\hookrightarrow  F_{x,s}$, coming from the one fixed in the beginning of this chapter $F_s\hookrightarrow F_{x,s}$. 

Now let us consider torus  $\gb T'=\mathbb G_m^{2n-2}\times \operatorname{Res}_{E_x/F_x}(\mathbb G_m)$, where $\mathbb G_m$ is the multiplicative group. We have an isomorphism \[E_x\otimes_{F_x}F_{x,s}\cong \prod_{\sigma\in \operatorname{Hom}(E_x,F_{x,s})} F_{x,s}.\]
This leads us to an isomorphism  \[X_*(\gb T'_{F_{s,x}})=X_*(\mathbb G_m^{2n-2})\times X_*(\operatorname{Res}_{E_x/F_x}(\mathbb G_m)_{F_{s,x}})\cong \mathbb Z^{2n-2}\times \mathbb Z^2,\] where $X_*$ denotes the groups of cocharacters and the non-trivial action of the second factor, $\mathbb Z^2$,  is given by 
\begin{align*}
    \Gamma_{F_x}&\to \operatorname{Aut}(\mathbb Z^2) \\
    \tau &\mapsto \begin{cases}(a_1,a_2)\mapsto (a_2,a_1) &\text{ if } \tau \not \in \operatorname{Gal}(F_{x,s}/E_x)\\
  (a_1,a_2) \mapsto (a_1,a_2) &\text{ if } \tau \in \operatorname{Gal}(F_{x,s}/E_x)\end{cases}.
\end{align*}
If we denote by $\gb D_{2n}\subset \gb{GL}_{2n}$ the  maximal diagonal torus, we can identify 
\[\gb D_{2n}(\mathbb C)\rtimes \Gamma_{F_x}\cong {}^LT'_x,\]
where the action on the left hand side is given by conjugation by $w$.

Now thanks to this description we can construct the following embeddings  \begin{align*}
 \iota_x \colon \gb T_n(\mathbb C)\rtimes \Gamma_{F_x} &\hookrightarrow \gb D_{2n}(\mathbb C)\rtimes \Gamma_{F_x}\cong{}^LT'_x \\
 (t,\tau) &\mapsto ((t,t^{-1}),\tau)
\end{align*}
and
\begin{align*}
   \gb D_{2n}(\mathbb C)\rtimes \Gamma_{F_x}&\hookrightarrow  \gb{GL}_{2n}(\mathbb C)\times\Gamma_{F_x}\\
    (d,\tau)&\mapsto \begin{cases}(dw,\tau) &\text{ if } \tau \not \in \operatorname{Gal}(F_{x,s}/E_x)\\
  (d,\tau) &\text{ if } \tau \in \operatorname{Gal}(F_{x,s}/E_x)\end{cases}.
\end{align*}
We can thus factor $\rho_{2n,x}^*\circ i_x$, 
 \[\begin{tikzcd}
  {}^LT_x\ar[rr, hook, "\rho_{2n,x}^*\circ i_x"]\ar[rd, hook]& & \gb{GL}_{2n}(\mathbb C)\times\Gamma_{F_x}. \\
&  {}^LT'_x \ar[ur, hook]&
 \end{tikzcd}\] 
 The definition of the candidate lift and previous factorization lead us to look at
 \[\begin{tikzcd}\Phi(\gb T_x)\ar[r]\ar[d]\arrow[dr, phantom, "\raisebox{.5pt}{\textcircled{\raisebox{-.9pt} {1}}}"]& \Phi(\gb T'_x)\ar[r] \ar[d] \arrow[dr, phantom, "\raisebox{.5pt}{\textcircled{\raisebox{-.9pt} {2}}}"]& \Phi(\gb{GL}_{2n,x}) \ar[d] \\
\Pi(\gb T_x) \ar[r] & \Pi( \gb T'_x)\ar[r] & \Pi(\gb{GL}_{2n,x}),
\end{tikzcd}
\]
where the first two vertical arrows are the ones given by the local Langlands correspondence for algebraic tori,  the third  one is  given  by the local Langlands correspondence for general linear group and the upper horizontal arrows are the ones obtained from composition with $\rho_{2n,x}^*\circ i_x$ (and its factorization).

Let $\lambda_x=\chi_{1,x}\otimes\dots\otimes\chi_{n-1,x}\otimes\chi_{n,x}$ be a character of $\gb T(F_x)=(F^\times_x)^{n-1}\times \gb{SO}_2^*(F_x)$, where $\chi_{i,x}$ is a character of $F_x^\times$  for $1\leq i \leq n-1$ and $\chi_{n,x}$ is a character of $\gb{SO}^*_2(F_x)$. The image of  \textcircled{\raisebox{-.9pt} {1}} is  \[\Lambda_x=\chi_{1,x}\otimes\dots\otimes\chi_{n-1,x}\otimes\mu_{n,x}\otimes\chi_{n-1,x}^{-1}\otimes\dots\otimes\chi_{1,x}^{-1},\]
 where $\mu_{n,x}:E_x^\times \to \mathbb{C}^\times$ is the character obtained from $\chi_{n,x}\colon\gb{SO}^*_2(F_x)\to\mathbb{C}^\times$ via \[\Phi(\gb{SO}_2^*)\to \Phi(\operatorname{Res}_{E_x/F_x}\mathbb G_m).\]
 To specify $\mu_{n,x}$, let $\phi_x$ be the parameter of $\chi_{n,x}=\pi_{\phi_x}$. Using \cite[p. 235]{RLab}, we have that
 \[\pi_{(\phi_{x,E_x})}
\colon \gb{SO}_2^*(E_x) \xrightarrow{\operatorname{Norm}}\gb{SO}_2^*( F_x)\xrightarrow{\pi_{\phi_x}}\mathbb C^\times,\]
 where $\pi_{(\phi_{x,E_x})}$ is the representation of $\gb{SO}_2^*(E_x)$ corresponding to the parameter in $\Phi(\mathbb G_{m,E_x})$ obtained from $\phi_x$ via the restriction $ E_x\to W_{E_x/F_x}$. We can give a description of this construction using the identification between  $\gb{SO}_2^*$ and the norm one elements of $\operatorname{Res}_{E_x/F_x}\mathbb G_m$ in Section \ref{section:structure}. Indeed, we have that $\gb{SO}^*_2(F_x)=E_x^1$, $\gb{SO}^*_2(E_x)=E_x^\times$ and the corresponding norm map, also denoted by $\operatorname{Norm}$, is given by
 \begin{align*}
     \operatorname{Norm}\colon E_x^\times &\to E_x^1 \\
     x &\mapsto x\sigma(x)^{-1},
 \end{align*}
 and thus
 \begin{align*}
   \pi_{(\phi_{x,E_x})}
\colon E_x^\times &\to \mathbb C^\times\\
     x&\mapsto \chi_{n,x}(x\sigma(x)^{-1}).
 \end{align*}
 %we need to specify 
 %\[\Phi(\gb{SO}_2^*) \to \Pi(\gb{SO}_2^*).\]
 %To do so,   After that, we also need to specify 
 %\[\Phi(\operatorname{Res}_{E_x/F_x}\mathbb G_m) \to \Pi(\operatorname{Res}_{E_x/F_x}\mathbb G_m).\]
On the other hand, we recall that we have an isomorphism (Shapiro's Lemma) \cite[Proposition 8.4]{CorB}
 \begin{align*}
 \Phi(\operatorname{Res}_{E_x/F_x}\mathbb G_m)&\to \Phi(\mathbb G_{m,E_x})\\
(W_F\to I_{\Gamma_{E_x}}^{\Gamma_{F_x}}\widehat{\mathbb G_m}(\mathbb C)\rtimes \Gamma_{F_x})&\mapsto (W_{E_x}\to \widehat{\mathbb G_m}(\mathbb C) \times \Gamma_{E_x}),
\end{align*}
where $I_{\Gamma_{E_x}}^{\Gamma_{F_x}}$ is the functor of induction from $\Gamma_{F_x}$ to $\Gamma_{E_x}$. The image of  $\iota_x\circ\phi_x\in \Phi(\operatorname{Res}_{E_x/F_x}\mathbb G_m) $ through this isomorphism is the parameter in $\Phi(\mathbb G_{m,E_x})$ obtained from $\phi_x$ via the restriction $ E_x\to W_{E_x/F_x}$. Therefore, since $\mu_{n,x}$ is the character of $E_x^\times$ corresponding to the parameter $\iota_x\circ\phi_x$ , we have that \begin{equation}
\mu_{n,x}=\pi_{(\phi_{x,E_x})}.     
\end{equation}

Now for the second square, first we look at the image of the parameter corresponding to $\mu_{n,x}$ via
 \[\Phi(\operatorname{Res}_{E_x/F_x}\mathbb G_m)\to \Phi(\gb{GL}_2).\]
 First, using again the identification given above (Shapiro's Lemma),
  \[\Phi(\operatorname{Res}_{E_x/F_x}\mathbb G_m) \to \Phi(\mathbb G_{m,E_x})\to \Pi(E^\times),\]
 we have that %via \ref{Weilpar},
 the image of $\mu_{n,x}$ corresponds to the Weil-Deligne representation \[(\operatorname{Ind}_{E_x/F_x}(\mu_{n,x}\circ \operatorname{ar}_{E_x}), 0),\]
 where $\operatorname{ar}_{E_x}=r_{E_x}^{-1}$ is the reciprocity map \cite[IV. (6.3)]{Neu} and $\operatorname{Ind}_{E_x/F_x}$ is the functor of smooth induction from $\Gamma_{F_x}$ to $\Gamma_{E_x}$. Now using the local Langlands correspondence for $\gb{GL}_2$ (\cite[Chapter 8]{BHGL2}), we get our admissible irreducible representation of $\operatorname{GL}_2={\bf GL}_2(F_x)$: 
 \[\Pi_{\mu_{n,x}}=\begin{cases} i_{B_2}^{GL_2}(\nu_{n,x}\otimes\varkappa_x\nu_{n,x}), & \text{if  $\mu_{n,x}=\nu_{n,x}\circ \operatorname{N}_{E_x/F_x}$, for some characater $\nu_{n,x}$ of $F_x^\times$, }
 \\
 \pi_{\mu_{n,x}} & \text{otherwise,}
 \end{cases} \label{liftex}\]
 where $\varkappa_x=\det (\operatorname{Ind}_{E_x/F_x}1_{E_x})$ and $\pi_{\mu_{n,x}}$ is constructed in \cite[Chapter 8]{BHGL2}, from a the character $\mu_{n,x}\colon E^\times\to \mathbb C^\times$. 
 
 Putting all this together we get an expression for $\Pi_x$. In particular for $\lambda_x=(\chi_{1,x},\dots,\chi_{n-1,x},1)$ unramified  we have  that $\Pi_x$ is  the constituent of 
 \[i^G_B(\chi_{1,x}\otimes \dots\otimes\chi_{n-1,x}\otimes 1 \otimes \varkappa_x\otimes \chi_{n-1,x}^{-1}\otimes \dots\otimes \chi_{1,x}^{-1}), \label{liftu}\]  that has a nonzero vector fixed under the special maximal compact subgroup $\gb{GL}_{2n}(\mathcal O_x)$.  
 
 Finally, we note that this construction gives us that the central character of $\Pi_x$ is \[ \varkappa_x\mu_{n,x}|_{F_x^\times}=\varkappa_x.\] Thus, the global character \begin{equation}
 \varkappa=\otimes_x ' \varkappa_x, \label{central}    
 \end{equation} is trivial on $F^\times$.

\subsection{Rankin-Selberg factors}\label{RS} To obtain an  automorphic representation,  we use as starting point the converse theorem. This theorem allows us to translate  the existence of an automorphic representation $\Pi$ of $\gb{GL}_{2n}(\mathbb{A}_F)$ to the existence of an admissible representation $\gb{GL}_{2n}(\mathbb{A}_F)$ with the right properties on its $L$-functions. We  present the $L$-functions and then we will describe the properties needed for this translation: the converse theorem.% we also have a global and a local construction of $L$-functions, $\gamma$-factors and $\varepsilon$-factors \cite{RSc}.

Recall that we denote by $q_F$ or simply by $q$ the cardinality of its field of constants and by $q_{F_x}$ the cardinality of the residue field of $F_x$.

   (\textit{Local}). For every $x\in |F|$, let us consider a smooth non-trivial character   $\psi_x\colon F_x\to \mathbb C^\times$ and a pair of irreducible smooth representations  $\pi_x$ and $\pi_x'$ of  $\gb{GL}_n(F_x)$ and $\gb{GL}_{m}(F_x)$, respectively. We define as in \cite{RSc} the (local) Rankin-Selberg $L$-functions, $\varepsilon$-factors and $\gamma$-factors: 
\begin{align*}
    L(s,\pi_x\times\pi_x'), \quad \varepsilon(s,\pi_x\times\pi_x',\psi_x)\quad \& \quad \gamma(s,\pi_x\times\pi_x',\psi_x).
\end{align*}
\begin{rmk}\label{RSLS}
We remark that from \cite[Corollary 3.8]{HL}, in the where case where $\pi_x,\pi_x'$ are  generic representations of $\gb{GL}_n(F_x)$ and $\gb{GL}_m(F_x)$ respectively, the polynomials $L(s,\pi_x\times \pi_x')$ and $\varepsilon(s,\pi_x\times\pi_x',\psi_x)$ coincide with  the corresponding factors $L(s,\pi_x\otimes \Tilde \pi_x',r)$ and $\varepsilon(s,\pi_x\otimes\Tilde \pi_x',\psi_x,r)$ obtained using the Langlands-Shahidi method (Section \ref{subsec:LocalLS}). The maximal parabolic  subgroup considered contains the upper triangular matrices and its Levi subgroup is isomorphic to $\gb{GL}_n\times \gb{GL}_m$.
\end{rmk}
(\textit{Global}). Let $\psi=\bigotimes_{x}'\psi_x: \mathbb A_F/F \to \mathbb C$ be a continuous non-trivial character,  $\pi=\bigotimes_x' \pi_x$  an admissible representation of $\gb{GL}_r(\mathbb A_F)$ and  $\pi'=\bigotimes_x' \pi_x'$  an admissible representation of $\gb{GL}_{r'}(\mathbb A_F)$. We assume that $\pi_x$ and $\pi_x'$ are irreducible. %or induced of Whitaker type.
We define the (global) Rankin-Selberg $L$-functions, $\varepsilon$-factors and $\gamma$-factors:
\begin{align*}
  L(s,\pi\times\pi')&=\prod_x L(s,\pi_x\times\pi'_x), \text{ as a formal power series in } q^{-s},\\
  \varepsilon(s,\pi\times\pi',\psi)&
  =\prod_x \varepsilon(s,\pi_x\times\pi_x',\psi_x) \text{ monomial function in } q^{-s}.
\end{align*}

\subsection{Converse Theorem}\label{subsec:converse}
In this Section, based on \cite{Cogcon} and \cite{L}, we provide a proof of the twisted version of the converse theorem found in \cite[Section 2]{Cogqsplit} for an admissible irreducible representation in positive characteristic. This result is stated in \cite[Theorem 8.1]{L09}, and we now take the opportunity to provide a proof.

Let $S$ be a finite subset of $|F|$. For each integer $m$, let 
\[\mathcal{A}_0(m)=\{\tau\mid \tau \text{ is a cuspidal automorphic representation of } \gb{GL}_{m}(\mathbb A_F)\},\]
and 
\[\mathcal{A}_0^S(m)=\{\tau\in \mathcal{A}_0(m)\mid \tau_v \text{ is unramified for all }v\in S\}.\]
For $n\geq 2$, we set 
\[\mathcal{T}(n-1)=\coprod_{m=1}^{n-1}\mathcal{A}_0(m) \quad \text{ and } \quad \mathcal{T}^S(n-1)=\coprod_{m=1}^{n-1}\mathcal{A}_0^S(m). \]
 If $\eta$ is a continuous character $F^\times\setminus \mathbb{A}^\times_F$, set
 \[\mathcal{T}(S;\eta)=\mathcal{T}^S(n-1)\cdot \eta=\{\tau=(\tau'\cdot \eta) \colon \tau'\in \mathcal{T}^S(n-1)\},\]
 where  $\tau'\cdot \eta$ is the representation given by $\tau'\otimes(\eta \circ \det)$.

\begin{thm}\label{Con}
Let $n$ be an integer greater or equal than $2$,  $\pi=\bigotimes_{x}' \pi_x$  an irreducible admissible representation of \textnormal{$\gb{GL}_n(\mathbb{A}_F)$} and  $\eta$ a continuous character of $\mathbb{A}^\times_F$ trivial on $F^\times$.

We suppose that, for a finite subset $S$ of places, $\pi$ satisfies the following properties:

\begin{enumerate}

\item The central character $\chi_\pi=\bigotimes_{x}' \chi_{\pi_x}$ of $\pi$ is invariant by the discrete subgroup $F^\times$ of $\mathbb{A}^\times_F$. 
\item For all $\pi'\in \mathcal{T}(S;\eta)$,  the formal series
\[L(s, \pi\times \pi') \text{ and }L(s, \Tilde \pi\times \Tilde\pi ')\]
are polynomials and they satisfy the functional equation
\[L(s,\pi \times \pi')=\varepsilon(s,\pi\times\pi',\psi)L(1-s,\Tilde \pi\times \Tilde\pi ').\]
\end{enumerate}
Then there exists an irreducible automorphic representation $\rho$ of \textnormal{$\gb{GL}_n(\mathbb{A}_F)$} such that, for each place $x\not \in S$ such that $\pi_x$ is unramified, $\rho_x$ is unramified and  $\pi_x\cong \rho_x$. Moreover, $\rho$ is cuspidal if $S=\emptyset$. 
\end{thm}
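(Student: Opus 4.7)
The plan is to follow the twisted converse theorem strategy of Cogdell and Piatetski-Shapiro \cite{Cogcon}, adapted to positive characteristic by Lomel\'i \cite{L}. Since the hypotheses require meaningful Rankin--Selberg $L$-functions $L(s,\pi\times\pi')$ for $\pi' \in \mathcal{T}(S;\eta)$, each local component $\pi_x$ is forced to be generic; I would thus fix Whittaker models $\mathcal{W}(\pi_x,\psi_x)$ and assemble them into a global Whittaker realization $\xi \mapsto W_\xi$ of $\pi$.

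The goal is to upgrade $W_\xi$ into an automorphic function on $\gb{GL}_n(F)\backslash \gb{GL}_n(\mathbb{A}_F)$. Let $P_n$ denote the mirabolic subgroup (matrices with last row $(0,\dots,0,1)$) and $\bar P_n$ its transpose. I would introduce the two Poincar\'e-type series
\[
U_\xi(g) = \sum_{\gamma \in N_n(F)\backslash P_n(F)} W_\xi(\gamma g), \qquad \widetilde U_\xi(g) = \sum_{\gamma \in N_n(F)\backslash \bar P_n(F)} \widetilde W_\xi(\gamma g),
\]
where $\widetilde W_\xi(g)= W_{\tilde\xi}(w_n\, {}^tg^{-1})$ lies in the Whittaker model of $\tilde\pi$ and $w_n$ is the longest Weyl element. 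Standard gauge estimates on Whittaker functions, transplanted to the function-field setting, give absolute convergence, and one has by construction left-invariance of $U_\xi$ under $P_n(F)$, of $\widetilde U_\xi$ under $\bar P_n(F)$, and of both under $Z(F)$ by hypothesis (1).

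The heart of the argument is to show $U_\xi = \widetilde U_\xi$. I would pair these against Whittaker vectors of cuspidal representations $\tau'\cdot\eta$ with $\tau'\in \mathcal{T}^S(n-1)$: unfolding the integrals
\[
\int U_\xi(g)\, W_{\tau'\cdot\eta}(g)\, |\det g|^{s - 1/2}\, dg \quad\text{and}\quad \int \widetilde U_\xi(g)\, W_{\widetilde{\tau'\cdot\eta}}(g)\, |\det g|^{1/2 - s}\, dg
\]
over the appropriate unipotent quotient produces global Rankin--Selberg zeta integrals whose values are $L(s,\pi\times(\tau'\cdot\eta))$ and $L(1-s,\tilde\pi\times\widetilde{\tau'\cdot\eta})$ respectively. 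Hypothesis (2) together with the functional equation identifies them, and a Mellin-inversion/density argument on the family $\mathcal{T}(S;\eta)$ yields $U_\xi = \widetilde U_\xi$. Once this equality is established, $P_n(F)$, $\bar P_n(F)$ and $Z(F)$ together generate $\gb{GL}_n(F)$, so $U_\xi$ descends to an automorphic function; the closure of the $\gb{GL}_n(\mathbb{A}_F)$-span under right translation furnishes the sought automorphic representation $\rho$. Matching spherical Whittaker vectors at every place $x\notin S$ at which $\pi_x$ is unramified gives $\rho_x\cong \pi_x$, and in the case $S=\emptyset$ an analogous unfolding against constant terms along proper parabolics forces them to vanish, yielding cuspidality.

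The main obstacle will be the Mellin-inversion step in positive characteristic: the $L$-functions here are polynomials in $q^{-s}$ rather than entire functions of a complex variable, so the passage from equality of the zeta integrals to equality of the underlying Whittaker-related functions has to be reformulated as an identity of formal Laurent series combined with an explicit finite-dimensional Paley--Wiener-type statement. Moreover, only twists by the fixed character $\eta$ are available, so one must verify that $\mathcal{T}(S;\eta)$ is sufficiently rich to separate candidates for $U_\xi - \widetilde U_\xi$. These two technical points are the substance of \cite{L} in the untwisted function-field case and \cite{Cogcon} in the number-field twisted case; the task here is to combine them carefully for the twisted function-field setting.
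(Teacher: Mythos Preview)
Your outline follows the classical Cogdell--Piatetski-Shapiro strategy, but there is a genuine error at the outset: the hypotheses do \emph{not} force each $\pi_x$ to be generic. The Rankin--Selberg factors $L(s,\pi_x\times\pi_x')$ are defined for an arbitrary irreducible admissible $\pi_x$ via the Langlands classification (this is how Section~\ref{RS} is set up and how Proposition~\ref{Cogrmk} is used), so the mere existence of the global $L$-functions says nothing about genericity. The paper handles this by replacing each $\pi_x$ with the induced representation of Whittaker type $\Xi_x$ having $\pi_x$ as its unique Langlands quotient; one has a Whittaker model for $\Xi_x$, not for $\pi_x$, and forms $W_\xi$ from these. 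This is exactly why the conclusion only asserts $\rho_x\cong\pi_x$ at \emph{unramified} places: the automorphic representation produced is an irreducible subquotient of the module generated by $\Xi^S=\bigotimes_{x\notin S}'\Xi_x$, and one recovers $\pi_x$ only where $\Xi_x$ is already irreducible with the correct $K_x$-fixed vector.

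Second, your treatment of $S\neq\emptyset$ is too optimistic. Since the test family $\mathcal T^S(n-1)\cdot\eta$ consists only of representations unramified at $S$, the unfolding/density argument cannot establish $U_\xi=\widetilde U_\xi$ on all of $\gb{GL}_n(\mathbb A_F)$; at best it matches them on a congruence subset determined by the $S$-components. The paper (following \cite{Cogcon}) fixes this by choosing specific vectors $\xi_x^\circ$ for $x\in S$ invariant under a congruence group $K_S'(N)$ with $W_{\xi_x^\circ}(1)=1$, proving left $\gb{GL}_n(F)$-invariance of $U_\xi$ only on the open set $\gb{GL}_n(\mathbb A_F)_S'(N)$, and then extending $U_{\xi^S}$ to the whole group by $\gb{GL}_n(F)$-translates and zero. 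Finally, note that the paper sidesteps your ``handle the twist directly'' plan altogether: it first reduces to $\eta=1$ via the elementary identity $L(s,\pi\times(\pi'\cdot\eta))=L(s,(\pi\cdot\eta)\times\pi')$ of Proposition~\ref{Cogrmk}, applies the untwisted function-field converse theorem of \cite{L,Cogcon} to $\pi\cdot\eta$, and then untwists the resulting automorphic representation. This is both shorter and avoids having to rework the Mellin/density step in the twisted setting.
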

In order to prove this, first we introduce some notation. Secondly, we review some notions about Whittaker models. Thirdly, we prove a relation between Rankin-Selberg $L$-functions. Finally, we introduce further notations and we give a proof of converse theorem.

We denote by $\gb U_n$ the radical unipotent subgroup of the Borel $F$-subgroup $\gb B_n$  of upper triangular matrices of $\gb{GL}_n$. We also denote by ${\bf M}_{m,n}$ the algebraic group of $(m\times n)$-matrices over $F$. Finally, let us also write  
\[K=\prod_x K_x=\prod_x \gb{GL}_n(\mathcal O_x).\]
It is the maximal open compact subgroup of $\gb{GL}_n(\mathbb A_F)$, and $\gb{GL}_n(\mathbb A_F)$ is the restricted product of $\gb{GL}_n(F_x)$ with respect to the $K_x=\gb{GL}_n(\mathcal{O}_x)$.

If $\psi$ is a non-trivial character of either $F_x$ or $\mathbb A_F$, then we use $\psi$ to also denote the character of either $\gb U_n(F_x)$ or $\gb U_n(\mathbb A_F)$, that associates to $u=(u_{i,j})$ the complex number
\[\psi(u)=\sum_{i=1}^n\psi(u_{i,i+1}).\]

(\textit{Whittaker models and induced smooth representations of Whittaker type}). First, we recall that the induced (smooth) representations of Whittaker type are representations of the form
\[i_{\gb Q(F_x)}^{\gb{GL}_n(F_x)}(\rho_{1,x}|\det|^{u_{1,x}}\otimes \dots \otimes \rho_{m_x,x}|\det|^{u_{m_x,x}}),\]
where $\gb Q$ is a parabolic subgroup containing $\gb B_n$ associated to a partition $(r_{1,x},\dots,r_{m_x,x})$ of $n$, $\rho_{i,x}$ is an irreducible square-integrable representation of $\gb{GL}_{r_{i,x}}(F_x)$ for every $1\leq i \leq m_x$ and the $u_{i,x}$'s are real numbers satisfying $u_{1,x}\leq \dots\leq  u_{m_x,x}$. 
Every induced representation of Whittaker type  $\pi_x$ of ${\bf GL}_{n}(F_x)$ admits a $\psi_x$-Whittaker model for some non-trivial character $\psi_x\colon F_x\to \mathbb C^\times$, i.e. the space $\mathcal{W}(\pi,\psi_x)$ spanned by functions on $\gb{GL}_n(F_x)$ of the form 
\[g\mapsto \lambda_{\psi_x}(\pi_x(g)\xi_x),\]
where $\xi_x$ is a vector of $\pi_x$ and $\lambda_{\psi_x}\colon V \to \mathbb C$ is a functional such that
\[\lambda_{\psi_x}(\pi_x(u)v)=\psi_x(u)\lambda_{\psi_x}(v),\]
for all $u$ in $\gb U_n(F_x)$ and $v$ in $\pi_x$.  Once  the non-zero Whittaker functional is fixed, we denote such function by $W_{\xi_x}$. Note that each such function $W_{\xi_x}$ is right-invariant under some open subgroup of $\gb{GL}_n(F_x)$ and the collection  of these functions satisfies the following relation:
\[W_{\xi_x}(u_xg_x)=\psi_x(u_x)W_{\xi_x}(g_x), \text{ for every }g_x\in \gb{GL}_n(F_x), u_x\in \gb U_n(F_x).\]

Globally, let $\pi=\bigotimes_x' \pi_x$ be an admissible representation of $\gb{GL}_n(\mathbb A_F)$, where $\pi_x$ is induced of Whittaker type with fixed Whittaker functional.  We can choose  $K_x$-fixed vectors $\xi_x^\circ$, for $x$ outside some finite subset $T$ of $|F|$, such that $W_{\xi_x^\circ}\in \mathcal W(\pi_x,\psi_x)$  is invariant under right multiplication by the compact open subgroup $\gb{GL}_n
(\mathcal O_x)$ and it is equal to $1$ at the identity. Now, for every vector $\xi=( \xi_x)_{x\in |F|}$ of $\pi$, such that $\xi_x=\xi_x^\circ$ for almost all $x$, we consider the complex valued function on $\gb{GL}_n(\mathbb A_F)$ given by
\begin{align}
 W_\xi : g=(g_x)_x\mapsto \prod_{x}W_{\xi_x}(g_x). \label{eq:Wker}   
\end{align}
Each such $W_\xi$ is right-invariant  under by some  open compact  subgroup of $\gb{GL}_n(\mathbb A_F)$ and satisfies
\[W_\xi(ug)=\psi(u)W_\xi(g), \text{ for every }g\in \gb{GL}_n(\mathbb A_F), u\in \gb U_n(\mathbb A_F).\]
This function will be our main in ingredient  constructing a non-zero equivariant homomorphism to the space of automorphic forms.

(\textit{Twist}). Now, we briefly recall the definition of Rankin-Selberg $L$-functions of representation of Wittaker type and we prove a relation between them.

Let $\tau$ and $\tau'$ be induced  representations of Whittaker type of ${\bf GL}_n(F_x)$ and of ${\bf GL}_m(F_x)$, respectively. We define for any $W\in \mathcal W(\tau,\psi_x)$, $W'\in \mathcal W(\tau',\psi_x)$, and any compactly supported locally constant function $\Phi\colon F_x^n\to \mathbb C$, the following local integrals, which define rational functions in $\mathbb C(q_{F_x}^{-s})$ \cite[Theorem 2.7]{RSc}. In the case where $m<n$, for  $0\leq j\leq n-m-1$, we denote 
\begin{align*}\Psi_j(s;W,W')= \int_{\gb U_m(F_x)\backslash\gb{GL}_m(F_x)}\bigg(\int_{\gb M_{j,m}(F_x)}&W\begin{pmatrix}h &  & \\y & I_j & \\
      & & I_{n-m-j}\end{pmatrix}dy \bigg)   \cdot \\
    & W'(h)|\det(h)|^{s-(n-m)/2}dh.
\end{align*}
    In the case  where $m=n$, we put
    \[ \Psi(s;W,W',\Phi)=\int_{\gb U_m(F_x)\backslash\gb{GL}_m(F_x)}W(g)W'(g)\Phi((0,\dots,0,1)g)|\det(g)|^{s}dg.\]
These integrals form  $\mathbb C[q^s_{F_x},q_{F_x}^{-s}]$-fractional ideals $I(\tau,\tau')$ in the case where $n=m$, and $I_j(\tau,\tau')$, in the  case where $m<n$, for  $0\leq j\leq n-m-1$, in $\mathbb C(q_{F_x}^{-s})$. The  unique generator of these ideals has the form 
\[L(s,\tau\times \tau')=\frac{1}{P(q^{-s})}\]
with $P(X)\in \mathbb C[X]$ a polynomial with $P(0)=1$. This is the Rankin-Selberg $L$-function of $\tau \times \tau'$.

 \begin{pro} \label{Cogrmk}Let $k$ a locally compact field of positive characteristic,  $\eta:k^\times \to \mathbb{C}^\times$ a character and $(\tau,V), (\tau',V')$ two induced  representations of Whittaker type of $\gb{GL}_n(k)$ and $\gb{GL}_m(k)$, respectively. Then 
 \[L(s,\tau\times(\tau'\cdot \eta))=L(s,(\tau\cdot \eta)\times \tau').\]
 \end{pro}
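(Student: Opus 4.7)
The plan is to compare the defining local Rankin--Selberg integrals directly and show they produce the same fractional ideals in $\mathbb C(q_k^{-s})$; the claim then follows because $L(s,\tau\times(\tau'\cdot\eta))$ and $L(s,(\tau\cdot\eta)\times\tau')$ are the canonical generators of those ideals. The key preliminary observation is that twisting descends explicitly to Whittaker models: since $\eta\circ\det$ is trivial on the unipotent radical, any Whittaker functional for $\tau$ also serves as one for $\tau\cdot\eta$, and therefore $\mathcal W(\tau\cdot\eta,\psi)=\{g\mapsto \eta(\det g)W(g):W\in \mathcal W(\tau,\psi)\}$ for any fixed non-trivial character $\psi\colon k\to\mathbb C^\times$, with the analogous description for $\tau'\cdot\eta$.

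Using these bijections to rewrite the integrals defining both sides in terms of Whittaker functions for $\tau$ and $\tau'$, the next step is the following matching. For $m<n$ and $0\le j\le n-m-1$, replacing $W$ by $\eta\cdot W$ in $\Psi_j(s;W,W')$ multiplies the integrand by $\eta$ evaluated on the determinant of the embedded $(n\times n)$-block matrix; but that block matrix has determinant exactly $\det h$, so the extra factor comes out of the inner integration as $\eta(\det h)$, which is the same factor produced by replacing $W'$ by $\eta\cdot W'$. Hence $\Psi_j(s;\eta\cdot W,W')=\Psi_j(s;W,\eta\cdot W')$. The equal-rank case $m=n$ is even more transparent, since the integrand of $\Psi(s;W,W',\Phi)$ is evaluated at a single $g\in \gb{GL}_n(k)$ and the twist on either side produces the identical factor $\eta(\det g)$.

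Consequently, the families of integrals generating $I_j(\tau\cdot\eta,\tau')$ and $I_j(\tau,\tau'\cdot\eta)$ (respectively $I(\tau\cdot\eta,\tau')$ and $I(\tau,\tau'\cdot\eta)$ in the equal-rank case) coincide term by term, so the two fractional ideals in $\mathbb C(q_k^{-s})$ are equal, and taking the unique generator of the form $1/P(q_k^{-s})$ with $P(0)=1$ yields the claimed equality of $L$-factors. I do not foresee any serious obstacle: the statement simply reflects the fact that in the Rankin--Selberg integral the character $\eta$ only sees $\det h$ (or $\det g$ in the equal-rank case), so twisting on the $\tau$ side and on the $\tau'$ side have identical effect on the integrand.
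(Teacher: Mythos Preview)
Your proposal is correct and follows essentially the same approach as the paper: both identify $\mathcal W(\tau\cdot\eta,\psi)$ with $\{\eta(\det\cdot)W:W\in\mathcal W(\tau,\psi)\}$ and then observe that in the Rankin--Selberg integrals the factor $\eta(\det\cdot)$ only sees $\det h$ (or $\det g$ when $m=n$), so it may be attributed to either Whittaker function, forcing the two fractional ideals to coincide. Your write-up is in fact a bit more explicit than the paper's, since you spell out that the block matrix $\left(\begin{smallmatrix}h&&\\y&I_j&\\&&I_{n-m-j}\end{smallmatrix}\right)$ has determinant $\det h$ and hence the twist pulls out of the inner $y$-integration, a point the paper leaves implicit.
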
 
 \begin{proof} %We define for any $W\in \mathcal W(\tau,\psi)$, $W'\in \mathcal W(\tau',\psi)$ and $\Phi\colon E^n\to \mathbb C$ locally constant, as in \cite{RSc}, the following local integrals, which are moreover rational functions in $\mathbb C(q^{-s})$ . In the case $n<m$, for  $0\leq j\leq n-m-1$, we denote 
%\[\Psi_j(s;W,W')= \int_{\gb N_m(E)\backslash\gb{GL}_m(E)}\int_{M_{j,m}(E)}W\begin{pmatrix}h &  & \\x & I_j & \\
   %   & & I_{n-m-j}\end{pmatrix}dx \cdot
   % W'(h)|\det(h)|^{s-(n-m)/2}dh.\]
    %In the case, $m=n$ we put
   % \[ \Psi(s,W,W',\Phi)=\int_{\gb N_m(E)\backslash\gb{GL}_m(E)}W(g)W'(g)\Phi(e_ng)|\det(g)|^{s}dg.\]
%We denote the $\mathbb
%C$-ideal generated by the rational functions $\Psi_j(s,W,W')$ by $I_j(\tau,\tau')$
% in the case $n<m$, for $0\leq j\leq n-m-1$, and by the rational functions $\Psi(s,W,W')$ by $I(\tau,\tau')$,
 %in the case $n=m$. By definition of the Rankin-Selberg $L$-functions, the conclusion will follow from an equality of ideals
% \[I(\tau,\tau'\otimes \eta)=I(\tau\otimes \eta,\tau') \quad (\text{resp. }I_j(\tau,\tau'\otimes \eta)=I_j(\tau\otimes \eta,\tau')),\]
 %in the case $n=m$ (resp. $n<m$, for $0\leq j\leq n-m-1$). \\
By definition (Section \ref{RS}), we notice that, after choosing a Whittaker functional $\Lambda:V\to  \mathbb C $ of $\tau$, we can compute the function $W_\xi \in \mathcal W(\tau\cdot \eta,\psi)$ as follows
 \[W_\xi(g)=\eta(\det(g))\Lambda(\tau(g)\xi)=\Lambda(\tau\cdot\eta(g)\xi).  \label{Witta}\]
 Now, let $\Lambda\colon V\to \mathbb C$ and $\Lambda'\colon V'\to \mathbb C$ be the respective Whittaker functionals of $\tau$ and $\tau'$, and $W_\xi \in \mathcal{W}(\tau,\psi)$ and $W'_{\xi'}\in \mathcal{W}(\tau'\cdot \eta,\psi)$. Then using the identity \ref{Witta} we get that 
 \[\Psi(s;W_\xi,W'_{\xi'})=\Psi(s;\Lambda(\tau(\cdot)\xi),\eta(\det(\cdot))\Lambda'(\tau'(\cdot)\xi')),\]
if $n=m$, and 
 \[\Psi_j(s;W_\xi,W'_{\xi'})=\Psi_j(s;\Lambda(\tau(\cdot)\xi),\eta(\det(\cdot))\Lambda'(\tau'(\cdot)\xi')),\]
  if $m<n$ and $0\leq j\leq n-m-1$. As these relations imply the equality of the ideals,  we have proved our desired relation. \end{proof}

(\textit{Further subgroups of $\gb{GL}_n$}). Finally, we introduce some notations. We fix a normal and proper curve  $X_F$ over $\mathbb F_q$  with field of fractions $F$. Denote by $\gb P_n\subset \gb{GL}_n$ the subgroup of matrices of the form 
\[\begin{pmatrix}*&\cdots &\cdots & *\\
\vdots & & &\vdots \\
*&\cdots&\cdots&* \\
0 &\cdots & 0& 1\end{pmatrix}.\]
For every closed subscheme $N$ of $X_F$ supported on $S$ with the ring of global sections denoted by $\mathcal O_N$, we consider the finite index subgroup $K_S'(N)$ of $K_S=\prod_{x\in S}\gb{GL}_n(\mathcal O_x)$ of matrices with image in $\gb{GL}_n(\mathcal O_N)$  of the form
\[\begin{pmatrix}*&\cdots &\cdots & *\\
\vdots & & &\vdots \\
*&\cdots&\cdots&* \\
0 &\cdots & 0& 1\end{pmatrix}.\]
We denote $\gb{GL}_n(\mathbb A_F)_S'(N)$ the open subgroup of $\gb{GL}_n(\mathbb A_F)$ given by the inverse image of $K_S'(N)$ under $\gb{GL}_n(\mathbb A_F)\to \prod_{x\in S}\gb{GL}_n(F_x)$.

 %\begin{align*}
 %    \Psi_j(s;W,W')&= \int_{\gb N_m(E)\backslash\gb{GL}_m(E)}\int_{M_{j,m}(E)}W_\xi\begin{pmatrix}h &  & \\x & I_j & \\
  %    & & I_{n-m-j}\end{pmatrix}dx \cdot
  %  W'_{\xi'}(h)|\eta(\det(h))|^{s-(n-m)/2}dh.\\
  %  &= \int_{\gb N_m(E)\backslash\gb{GL}_m(E)}\int_{M_{j,m}(E)}\det(h)\Lambda(\tau\begin{pmatrix}h &  & \\x & I_j & \\
 %     & & I_{n-m-j}\end{pmatrix}\xi)dx \cdot
%    W'_{\xi'}(h)|\det(h)|^{s-(n-m)/2}dh.\\
 %%     &= \int_{\gb N_m(E)\backslash\gb{GL}_m(E)}\int_{M_{j,m}(E)}\det(h)\Lambda(\tau\begin{pmatrix}h &  & \\x & I_j & \\
 %     & & I_{n-m-j}\end{pmatrix}\xi)dx \cdot
 %   W'_{\xi'}(h)|\det(h)|^{s-(n-m)/2}dh.
 %\end{align*}

Now we go  back to the proof of the converse theorem.
  \begin{proof}[Proof of Theorem \ref{Con}]
For every $x\in |F|$ such that $\pi_x$ unramified, we fix a vector $v_x\in V_x^{K_x}$.  For every $x$, let $\Xi_x$ be the representation of $\gb{GL}_n(F_x)$ that has  $\pi_x$ as its unique Langlands' quotient. Every $\Xi_x$ is of the form
 \[\Xi_x=i^{\gb{GL}_n(F_x)}_{\gb Q(F_x)}(\rho_{1,x}|\det|^{u_{1,x}}\otimes \dots \otimes \rho_{m_x,x}|\det|^{u_{m_x,x}}),\]
 where $\gb Q$ is a parabolic subgroup containing $\gb B_n$ associated to a partition $(r_{1,x},\dots,r_{m_x,x})$ of $n$, $\rho_{i,x}$ is an irreducible tempered representation of $\gb{GL}_{r_i,x}(F_x)$ for every $1\leq i \leq m_x$ and the $u_{i,x}$'s are real numbers satisfying $u_{1,x}> \dots>  u_{m_x,x}$. 

We can reduce the theorem to the case $\eta=1$. Indeed, by definition of Rankin-Selberg $L$-function  and using Proposition \ref{Cogrmk} we have
\begin{align*}
L(s,\pi_x\times(\pi'_x\cdot \eta_x))&=L(s,\Xi_x\times (\Xi_x'\cdot \eta_x))\\ &=L(s,(\Xi_x\cdot \eta_x)\times \Xi_x' )=L(s,(\pi_x\cdot \eta_x)\times \pi_x' ),    
\end{align*}
 we can apply  Theorem \ref{Con}, with trivial character, to $\pi\cdot \eta$. Therefore we have that there exists an automorphic representation $\Pi'$ such that $\Pi'_x\cong \pi_x\cdot \eta$ for $x\not \in S$ such that $\pi_x$ is unramified. Then $\Pi\coloneqq \Pi'\cdot \eta^{-1}$ is automorphic and satisfies that $\Pi_x\cong \pi_x$ for $x\not \in S$ such that $\pi_x$ is uramified. Therefore, from now on, we will assume that $\eta=1$.

Suppose that $S$ is not empty. For every  $x\not \in S$ for which $\pi_x$ is unramified, $\Xi_x$  must have a unique $K_x$-fixed vector $\xi_x^\circ$ which projects to the fixed $K_x$-fixed $v_x$ vector of $\pi_x$. From these choices,  we can consider for every $\xi=(\xi_x)_x$ such that $\xi_x=\xi_x^\circ$ for almost all $x\not \in S$, the global Whittaker  function $W_\xi$ \eqref{eq:Wker}. 

Now for every $x \in S$ such that $\pi_x$ is ramified, we can choose $\xi_x^\circ$ such that $(\xi_x^\circ)_{x\in S}$ is $K_S'(N)$-invariant for some subscheme $N$ of $X_F$, supported on $S$, and (\cite[Section 8 \& p. 203]{Cogcon}) 
 \[W_{\xi_x^\circ}(1)=1.\]
Thus, $\xi_x^\circ$ is  invariant under right multiplication by $\begin{pmatrix}h & 0\\
0 & 1
\end{pmatrix}$, with $h \in \gb{GL}_{n-1}(\mathcal O_x)$,  for  every $x\in S$.

Finally we consider as in \cite[Corollaire B.15]{L}, the well defined function on $\gb{GL}_n(\mathbb A_F)$
\[U_\xi(g)=\sum_{\gamma\in \gb U_n(F)\backslash \gb P_n(F)}W_\xi(\gamma g).  \label{introU}\]
Putting these together we are able to consider, for every $\xi^S=(\xi_x)_{x\not \in S}$ completed by $\xi=(\xi^S, (\xi_x^\circ)_{x\in S})$, the function   $U_{\xi^S}$ on $\gb{GL}_n(\mathbb A_F)$ defined by 
\[U_{\xi^S}(g)=U_\xi(g'),\]
if $g$ can be written as $g=\gamma g'$ with $\gamma\in \gb{GL}_n(F)$ and $g'\in \gb{GL}_n(\mathbb A_F)_S'(N)$ 
and, if not, by
\[U_{\xi^S}(g)=0.\]
The map $\xi^S\mapsto U_{\xi^S}$ defines a non-zero \cite[Lemma 6.3]{Cogcon} equivariant homomorphism  of the smooth admissible representation $\Xi^S=\bigotimes_{x\not \in S}' \Xi_x$ of $\prod_{x\not \in S}'\gb{GL}_n(F_x)$ to the space of functions on $\gb{GL}_n(F) \backslash \gb{GL}_n(\mathbb A_F)$ that are  invariant under right multiplication by open compact subgroups of $\gb{GL}_n(\mathbb A_F)$ \cite[p. 237]{L}.  The action of the center $\gb Z_n(\mathbb A_F)$ of $\gb{GL}_n(\mathbb A_F)$ on the span  of these  functions is according to the central character  $\chi_\pi$ of $\pi$. 

Since $\Xi^S$ has $\Pi^S=\bigotimes_{x\not \in S}'\Pi_x$ as its unique irreducible quotient, if we take a vector $\xi^S$ which has a non-zero projection to $\Pi^S$, then $\xi^S$ is a cyclic generator of $\Xi^S$. Thus the representation $V$ of $\gb{GL}_n(\mathbb A_F)$ generated by the space of $U_{\xi^S}$ is admissible \cite[Section 5]{CorBJ} and cyclic, generated by some element $f_0$. Let  $U$ be a maximal $\gb{GL}_n(\mathbb A_F)$-invariant subspace of $V$ not containing $f_0$.  Then $\Pi'=V/U$ is a non-zero subquotient of the space of automorphic forms; $\Pi'$ is automorphic and at every place $x\not \in S$ where $\pi_x$ is unramified, its Satake parameter equal that to the one of $\pi_x$ \cite[Theorem A]{Cogcon}.

In the case where $S$ is empty, we just consider $(\xi \mapsto U_\xi)$. As $U_\xi$ is cuspidal \cite[Proposition 12.3]{GL3II}, we can conclude as before.
\end{proof}

 Next, the tool that will allow us to establish the desired properties of the $L$-functions is provided by the Langlands-Shahidi method.

\section{The Langland-Shahidi method} 
The Langlands-Shahidi method in positive characteristic \cite{L18}, studies  $\gamma$-factors, $\varepsilon$-factors and $L$-functions for generic representations associated to irreducible constituents of the adjoint representation of ${}^LM$ on ${}^L\mathfrak{n}$, where $\mathfrak{n}$ is the Lie algebra of ${}^LN$.

Let $\gb P=\gb M\gb N$ be a maximal parabolic subgroup of $\gb{SO}_{2m+2n}^*$ containing $\gb B$. Given the structure given in Section \ref{section:structure}, we have an isomorphism $\gb M\cong \gb{GL}_m \times \gb{SO}^*_{2n}$, where $m$ and $n$ are greater than  $1$.
In this case, we  obtain the following decomposition of the adjoint representation
\[r=r_1\oplus r_2,  \label{adjr}\]
where $r_1=\rho_m\otimes \Tilde \rho^*_{2n}$  and $r_2=\wedge^2\rho_{m}\otimes 1_{\gb{SO}_{2n}^*}$ \cite[p. 565]{Shram}. Here $\rho_m$ is the standard representation of ${}^L\gb{GL}_m(\mathbb C)$, $\rho_{2n}^*$ the  representation of ${}^L\gb{SO}^*_{2n}(\mathbb C)$ constructed in Section 3.2 eq. \eqref{L-hom} and $1_{\gb{SO}_{2n}^*}$ is the trivial representation of ${}^L\gb{SO}_{2n}^*(\mathbb C)$. Thus we obtain two instances of Langlands-Shahidi $L$-functions, $\varepsilon$-factors and $\gamma$-factors.
\subsection{Local construction}\label{subsec:LocalLS}
Suppose $F$ is a locally compact field of positive characteristic. Recall that  the cardinality of the residue field is denoted by $q_F$. Let $E$ be a separable quadratic extension of $F$ contained in a fix separable closure $F_s$, with Galois group $\operatorname{Gal}(E/F)=\{1,\sigma\}$.

Let  $\psi\colon F\to \mathbb C^\times$ be a smooth non-trivial character, $\pi$  a generic representation of $\operatorname{\textbf{SO}}_{2n}^*(F)$ and $\tau$ a generic representation of $\operatorname{\textbf{GL}}_m(F)$. Then $\tau \otimes \Tilde \pi$ ($\Tilde \pi$ is the contragredient of $\pi$)  is a generic representation of $\operatorname{\textbf{M}}(F)$.
\begin{itemize}
    \item For $r_1$, we denote the corresponding local factors by 
\[\gamma(s,\pi\times\tau,\psi),\quad   \varepsilon(s,\pi\times\tau,\psi) \quad \& \quad L(s,\pi\times\tau).\]  
Observe that if we allow $E=F\times F$, we obtain the local construction for the split groups $\gb{SO}_{2n}(F)$, already studied in \cite{L09}.
\item For $r_2$, we denote the corresponding local factors by
\[\gamma(s,\tau,\wedge^2\rho_m,\psi)\quad   \varepsilon(s,\tau,\wedge^2\rho_m,\psi) \quad \& \quad L(s,\tau,\wedge^2\rho_m).\]
These factors have already been studied in detail in \cite{HLsym}
\end{itemize}    
Among the properties of these factors, we would like to highlight the multiplicativity property \cite[Section 5]{L15}. 

(\textit{$r_1$ case}). We have the following two versions. Let  $\gb M_1=\gb{GL}_m\times \gb{GL}_{n_b}\times \cdots \times \gb{GL}_{n_1}\times \gb{SO}^*_{2n_0}\subset \gb M$, and suppose that $\pi$ is the generic subquotient of 
\[i_{P_1}^{\gb{SO}_{2n}^*(F)}(\pi_b \otimes \cdots  \otimes\pi_1  \otimes \pi_0),\]
where $\gb P_1=\gb M_1\gb N_1$ is the parabolic subgroup of $\gb{SO}_{2n}^*$, containing $\gb B$, $\pi_i$ is a generic representation of $\gb{GL}_{n_i}(F)$ for $1\leq i\leq b$ and $\pi_0$  is a generic representation of $\gb{SO}^*_{2n_0}(F)$. Then the multiplicative property \cite[Section 5]{L15} gives us
\begin{align}\label{eq:mult}
\gamma(s, \pi \times \tau,\psi)=\gamma(s,\pi_0 \times \tau,\psi)\prod_{i=1}^{b}\gamma(s,\pi_i \times \tau,\psi)\gamma(s,\Tilde \pi_i\times \tau,\psi),    
\end{align}
where $\gamma(s,\pi_i \times \tau,\psi)$ is the Rankin-Selberg $\gamma$-function (Section \ref{RS}). For the other case, let $\gb M_2=\gb{GL}_{m_b}\times \cdots \times\gb{GL}_{m_1}\times\gb{SO}^*_{2n}\subset\gb M$ and suppose that $\tau$ is the generic subquotient of 
\[i_{Q}^{\gb{GL}_{m}(F)}(\tau_{b}\otimes \cdots\otimes \tau_{1}),\]
    where $\gb Q=\gb M_2\gb N_2$ is the parabolic subgroup  of $\gb{GL}_m$ containing the upper triangular matrices, $\tau_i$ is a generic representation of $\gb{GL}_{m_i}(F)$ for $1\leq i\leq b$. Then 
\begin{align}
\gamma(s,\pi \times \tau,\psi)=\prod_{i=1}^b\gamma(s,\pi\times \tau_i,\psi)\label{eq:mult1}.    
\end{align}
Before continuing to the $r_2$ case, we make more explicit the principal series case. Let $(\chi_1,...,\chi_{n-1},\chi)$ be a character of the maximal subtorus $\gb T(F)$ of $\operatorname{SO}_{2n}^*=\gb{SO}(q_{E,n})(F)$, where $\chi_i$ is a character of $F^\times$ for each $1\leq i \leq n-1$ and $\chi$ is a character of  $E^1$. Then, if $\pi$ is the generic subquotient of \[i^{SO_{2n}^*}_{B}(\chi_1\otimes \cdots\otimes\chi_{n-1}\otimes\chi)\] and $\xi$ a character of $F^\times$, the multiplicativity formula gives us
\[\gamma(s, \pi\times \xi, \psi)=\gamma(s,\chi\times\xi,\psi)\prod_{i=1}^{n-1}\gamma(s,\chi_i\xi,\psi)\gamma(s,\chi_i^{-1} \xi ,\psi)\label{Cprin}\]
where the $\gamma(s,\chi_i\xi,\psi)$ are Tate factors.

Let us study the rank one case. First write $\psi_E=\psi\circ \operatorname{Tr}_{E/F}$ and let $\lambda(E/F,\psi)$ be the Langlands constant \cite[Section 30.4]{BHGL2}. Now let  us recall that we constructed the simply connected cover of $\gb{SO}(q_{E,2})$ (Section \ref{rnk1}):
\[ \operatorname{Res}_{E/F}\gb{SL}_2\to \gb{SO}(q_{E,2}).\]
This morphism restricts to
\[\operatorname{diag}(t,t^{-1})\mapsto [(x_1,x_2,x) \mapsto (N_{E/F}(t)x_1,N_{E/F}(t)^{-1}x_2,t\sigma^{-1}(t)x)].\]
 Thus we have the following \cite[Proposition 1.3]{L15}. 
\begin{pro}\label{2.9:gamma2}
Let $(\chi,\xi)$ be a smooth character  of $\gb T(F)$, and  $\mu$ the character of $E^\times$ defined by $[t\mapsto (\chi\circ N_{E/F})(t)\cdot \xi(t\sigma^{-1}(t))]$.
Then \[\gamma(s,\chi\times \xi,\psi)=\lambda(E/F,\psi)\gamma(s,\mu,\psi_{E}).\]
\end{pro}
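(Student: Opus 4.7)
The plan is to exploit the isogeny $\operatorname{Res}_{E/F}\gb{SL}_2\twoheadrightarrow \gb{SO}(q_{E,2})$ from Section~\ref{rnk1}, combined with the invariance of Langlands--Shahidi $\gamma$-factors under central isogeny and restriction of scalars, to reduce the proposition to a standard Tate $\gamma$-factor computation on $\gb{SL}_2(E)$. The argument mirrors Lomel\'i's treatment of the split case in \cite[Proposition~1.3]{L15}, adapted to the quasi-split non-split setting.

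First, I would check that the isogeny restricts at the level of $F$-points of maximal tori to the map $E^\times\to F^\times\times E^1$ given by $t\mapsto (N_{E/F}(t),t\sigma^{-1}(t))$, as can be read off from the explicit formula recalled just before the proposition. Pulling back the character $(\chi,\xi)$ of $\gb T(F)=F^\times\times E^1$ along this map therefore produces exactly the character $\mu$ defined in the statement. Since the kernel $\mu_2$ of the isogeny is central and automatically killed by $\mu$, this pullback is well defined, and the genericity of the associated principal series is preserved on both sides.

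Second, I would invoke the compatibility of the Langlands--Shahidi $\gamma$-factor with central isogeny of quasi-split reductive groups. This compatibility follows from the definition via local coefficients, because the intertwining operators, the Whittaker functionals, and the adjoint action on the Lie algebra of the unipotent radical all transfer unchanged through an isogeny whose kernel is central. As a consequence, $\gamma(s,\chi\times\xi,\psi)$ equals the rank-one Langlands--Shahidi $\gamma$-factor on $\operatorname{Res}_{E/F}\gb{SL}_2$ attached to the character $\mu$ of its torus, computed with additive character $\psi$ on $F$.

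Third, I would pass from $\operatorname{Res}_{E/F}\gb{SL}_2$ to $\gb{SL}_2$ over $E$ by restriction of scalars. At the level of $F$-points, $\operatorname{Res}_{E/F}\gb{SL}_2(F)=\gb{SL}_2(E)$, and the Whittaker data transfers naturally, but the additive character on the unipotent radical that was inherited from $F$ becomes $\psi_E=\psi\circ\operatorname{Tr}_{E/F}$ when viewed on $E$. The correction factor that precisely accounts for this change of additive character in induced local constants, in the sense of \cite[Section~30.4]{BHGL2}, is the Langlands constant $\lambda(E/F,\psi)$. On $\gb{SL}_2(E)$ with additive character $\psi_E$, the rank-one Langlands--Shahidi $\gamma$-factor attached to the torus character $\mu$ is the Tate $\gamma$-factor $\gamma(s,\mu,\psi_E)$, yielding the claimed identity.

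The main obstacle will be verifying that the Langlands constant appears with multiplicity exactly one and without further unwanted corrections: this amounts to carefully tracking Haar measures, Whittaker functionals, and the identification of the unipotent radicals through the restriction-of-scalars step. Once these bookkeeping details are settled (essentially as in \cite{L15} for the split case, with the separable quadratic extension $E/F$ playing the role that $F\times F/F$ plays there), the chain of equalities above gives the proposition.
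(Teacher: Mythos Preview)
Your proposal is correct and follows precisely the approach the paper indicates: the paper does not give an independent proof here but rather sets up the isogeny $\operatorname{Res}_{E/F}\gb{SL}_2\to\gb{SO}(q_{E,2})$, computes its restriction to the torus as $t\mapsto(N_{E/F}(t),t\sigma^{-1}(t))$, and then cites \cite[Proposition~1.3]{L15} for the conclusion. Your sketch is exactly an unpacking of how that cited proposition works---pullback of the torus character through the central isogeny, invariance of the local coefficient under such an isogeny, and passage to $\gb{SL}_2$ over $E$ via restriction of scalars, with the Langlands constant $\lambda(E/F,\psi)$ arising from the change of additive character $\psi\rightsquigarrow\psi_E$---so there is nothing to add.
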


(\textit{$r_2$ case}). The multiplicative property in the case of $r_2$ has the following form. Let $\gb M_1=\gb{GL}_{m_b}\times \cdots \times\gb{GL}_{m_1}\times\gb{SO}^*_{2n}\subset\gb M$ and suppose that $\tau$ is the generic subquotient of 
\[i_{P}^{\gb{GL}_{m}(F)}(\tau_{b}\otimes \cdots\otimes \tau_{1}),\]
  where $\tau_i$ is a generic representation of $\gb{GL}_{m_i}(F)$ for $1\leq i\leq b$. Then 
\[\gamma(s,\tau,\wedge^2\rho_{m},\psi)=\prod_{i=1}^b\gamma(s,\tau_i,\wedge^2\rho_{m_i},\psi)\prod_{i<j}\gamma(s,\tau_i\times \tau_j,\psi).\]

Another important property are the following two stability results for the $\gamma$-functions. First, for any smooth character $\eta\colon F^\times \to \mathbb{C}^\times$ and any smooth representation $\tau$  of ${\bf GL}_m(F)$, we write  
\[\tau \cdot \eta =\tau \otimes (\eta \circ \det). \]

\begin{lem}\cite[Main Lemma 1]{Shtwist} \label{gamtwist}  Let $\pi$ be a generic representation of $\gb{SO}^*_{2n}(F)$ and $\tau$ a generic representation of $\gb{GL}_m(F)$. Then there exists a character $\chi$ of $F^\times$ so that $\gamma(s,\pi \times (\tau \cdot \chi),\psi)$ is a monomial in $q_F^{-s}$,
for $1\leq i\leq m$. Moreover $\chi$ can be replaced by any character of $F^\times$ whose conductor is larger than that of $\chi$.
\end{lem}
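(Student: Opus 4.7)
The plan is to combine the multiplicativity properties \eqref{eq:mult} and \eqref{eq:mult1} of the Langlands--Shahidi $\gamma$-factors with the rank-one identity of Proposition~\ref{2.9:gamma2} to reduce the problem to the stability of abelian Tate $\gamma$-factors under highly ramified twists, which is classical.

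First I would reduce the $\gb{GL}_m$ side. Writing the generic representation $\tau$ as a generic subquotient of $i_Q^{\gb{GL}_m(F)}(\xi_1\otimes\cdots\otimes\xi_m)$ for characters $\xi_j$ of $F^\times$ (possible via Langlands and Bernstein--Zelevinsky classification applied to irreducible generic representations of $\gb{GL}_m(F)$), twisting by $\chi$ replaces each $\xi_j$ by $\xi_j\chi$, and applying \eqref{eq:mult1} yields
\[
\gamma(s,\pi\times\tau\cdot\chi,\psi)=\prod_{j=1}^{m}\gamma(s,\pi\times\xi_j\chi,\psi).
\]
Next I would reduce the $\gb{SO}_{2n}^*$ side: realize $\pi$ as a generic constituent of $i_{\gb B}^{\gb{SO}_{2n}^*(F)}(\chi_1\otimes\cdots\otimes\chi_{n-1}\otimes\chi_0)$ for characters $\chi_i$ of $F^\times$ and $\chi_0$ of $E^1=\gb{SO}_2^*(F)$; formula \eqref{Cprin} then gives
\[
\gamma(s,\pi\times\xi_j\chi,\psi)=\gamma(s,\chi_0\times\xi_j\chi,\psi)\prod_{i=1}^{n-1}\gamma(s,\chi_i\xi_j\chi,\psi)\,\gamma(s,\chi_i^{-1}\xi_j\chi,\psi).
\]

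At this point every factor is abelian: the $\chi_i^{\pm 1}\xi_j\chi$ factors are Tate $\gamma$-factors over $F$, and by Proposition~\ref{2.9:gamma2} the rank-one factors become Tate $\gamma$-factors over $E$,
\[
\gamma(s,\chi_0\times\xi_j\chi,\psi)=\lambda(E/F,\psi)\,\gamma(s,\mu_j\cdot(\chi\circ N_{E/F}),\psi_E),
\]
where $\mu_j(t)=(\chi_0\circ N_{E/F})(t)\xi_j(t\sigma^{-1}(t))$. I would then invoke the standard stability result from Tate's thesis in positive characteristic: for any character $\eta$ of a non-archimedean local field, once the twisting character is ramified enough the $L$-factors on both sides of the functional equation collapse to $1$, so the $\gamma$-factor reduces to the $\varepsilon$-factor, which is a monomial in $q_F^{-s}$. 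Choosing $\chi$ with conductor strictly greater than the conductors of all characters $\chi_i^{\pm 1}\xi_j$ on $F^\times$ and of all characters $\mu_j$ on $E^\times$ simultaneously makes every factor in the finite product a monomial, hence so is the product $\gamma(s,\pi\times\tau\cdot\chi,\psi)$ itself. The ``moreover'' clause follows from the same estimates, since any character of strictly larger conductor still falls within the stability range.

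The main obstacle I anticipate is not the abelian stability itself, which is essentially Tate's thesis, but the combinatorial bookkeeping of the reduction: one must verify that the multiplicativity formulas \eqref{eq:mult} and \eqref{eq:mult1} hold unconditionally in the positive characteristic setting (they do, by Lomelí's extension of the Langlands--Shahidi method), that Proposition~\ref{2.9:gamma2} applies uniformly to each of the finitely many $\xi_j\chi$, and that the conductor bound on $\chi$ can be chosen to dominate simultaneously all of the $2(n-1)m+m$ abelian characters appearing in the reduction, together with the $m$ characters of $E^\times$ arising from the rank-one step. Once these checks are made, the conductor threshold is explicit as the maximum of the relevant conductors.
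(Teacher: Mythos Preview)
Your reduction has a genuine gap at the very first step on both sides. You claim that a generic irreducible representation $\tau$ of $\gb{GL}_m(F)$ can be realized as a generic subquotient of a principal series $i_Q^{\gb{GL}_m(F)}(\xi_1\otimes\cdots\otimes\xi_m)$ with $\xi_j$ characters of $F^\times$, and similarly that a generic $\pi$ of $\gb{SO}_{2n}^*(F)$ lies in a Borel-induced principal series. Neither is true in general: the Bernstein--Zelevinsky classification only realizes an irreducible representation inside an induction from \emph{supercuspidals} on a Levi, not from characters on the maximal torus. A supercuspidal $\tau$ of $\gb{GL}_m(F)$ (for $m\geq 2$) is not a subquotient of any principal series, and likewise a supercuspidal $\pi$ of $\gb{SO}_{2n}^*(F)$ is not. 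The multiplicativity formulas \eqref{eq:mult} and \eqref{eq:mult1} therefore stop at the supercuspidal support, leaving you with factors $\gamma(s,\pi_0\times(\tau_j\cdot\chi),\psi)$ where $\pi_0$ and $\tau_j$ are supercuspidal, and these are \emph{not} abelian Tate factors. Your appeal to Tate's thesis does not cover them.

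The paper does not supply its own proof of this lemma; it is quoted directly from Shahidi \cite[Main Lemma 1]{Shtwist}. Shahidi's argument does not proceed by reduction to the abelian case. Instead, it works directly with the local coefficient $C_\psi(s,\sigma,\tilde w_0)$ defining the $\gamma$-factor: using the integral representation of the local coefficient via partial Bessel functions (Jacquet--Shalika type integrals over the unipotent radical), one shows that when the twisting character $\chi$ is sufficiently ramified the support of the relevant integrand is forced into a region where the Whittaker function is controlled by its germ near the identity, and the resulting integral is an explicit monomial in $q_F^{-s}$. This argument is uniform in $\pi$ and $\tau$ (in particular it handles supercuspidals), and the conductor bound comes from the asymptotics of the Bessel function rather than from a finite list of abelian conductors. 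If you want a self-contained proof in the positive-characteristic setting, you should follow that line; your abelian reduction only works when both $\pi$ and $\tau$ happen to lie in the principal series.
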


The other  important  stability result is the following.
\begin{thm} \cite[Corollary 6.5]{GL} \label{sthm}
Let $\pi_1$ and $\pi_2$ be two irreducible generic representations of $\gb{SO}^*_{2n}(F)$ having the same central character, and let $\tau$ be an irreducible generic representation of $\gb{GL}_m(F)$. Then for a sufficiently highly ramified character $\chi$ of $F^\times$, we have 
\[ \gamma(s,\pi_1 \times (\tau \cdot \chi),\psi) = \gamma(s,\pi_2 \times (\tau \cdot \chi),\psi).\]
\end{thm}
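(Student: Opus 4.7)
My approach is to follow the standard Shahidi-style stability strategy: reduce to an abelian gamma factor calculation via multiplicativity and then invoke Tate's stability for $\gb{GL}_1$ gamma factors on $F$ and on $E$.

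First I would reduce to principal series. By the subrepresentation theorem, realize each $\pi_i$ as the generic subquotient of
\[i^{\gb{SO}_{2n}^*(F)}_{\gb B(F)}(\chi_1^{(i)}\otimes\cdots\otimes\chi_{n-1}^{(i)}\otimes\theta^{(i)}),\]
with $\chi_k^{(i)}$ a character of $F^\times$ for $1\le k\le n-1$ and $\theta^{(i)}$ a character of $E^1$; and $\tau$ as a generic subquotient of the induction from characters $\xi_1,\ldots,\xi_m$ of $F^\times$. Combining the multiplicativity formulas \eqref{eq:mult}, \eqref{eq:mult1} and the principal-series identity displayed just above Proposition~\ref{2.9:gamma2} yields
\[\gamma(s,\pi_i\times(\tau\cdot\chi),\psi)=\prod_{j=1}^{m}\gamma(s,\theta^{(i)}\times \xi_j\chi,\psi)\prod_{j=1}^{m}\prod_{k=1}^{n-1}\gamma(s,\chi_k^{(i)}\xi_j\chi,\psi)\,\gamma(s,(\chi_k^{(i)})^{-1}\xi_j\chi,\psi).\]

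For $\chi$ sufficiently ramified relative to the fixed inducing data, Tate's classical stability for $\gb{GL}_1$ gamma factors gives a relation of the form $\gamma(s,\alpha\eta,\psi)=\alpha^{-1}(c_\eta)\gamma(s,\eta,\psi)$ whenever $\alpha$ has bounded conductor and $\eta$ is highly ramified, with $c_\eta\in F^\times$ depending only on the conductor of $\eta$. Applying this with $\alpha=\chi_k^{(i)}$ and $\alpha=(\chi_k^{(i)})^{-1}$ shows that every product $\gamma(s,\chi_k^{(i)}\xi_j\chi,\psi)\,\gamma(s,(\chi_k^{(i)})^{-1}\xi_j\chi,\psi)$ collapses to $\gamma(s,\xi_j\chi,\psi)^2$, independent of $\chi_k^{(i)}$. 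It remains to treat the factor involving $\theta^{(i)}$. I would invoke Proposition~\ref{2.9:gamma2} to rewrite $\gamma(s,\theta^{(i)}\times \xi_j\chi,\psi)=\lambda(E/F,\psi)\gamma(s,\mu_j^{(i)},\psi_E)$, where $\mu_j^{(i)}(t)=(\theta^{(i)}\circ N_{E/F})(t)\cdot(\xi_j\chi)(t\sigma^{-1}(t))$. For $\chi$ very ramified, $\mu_j^{(i)}$ is a highly ramified character of $E^\times$, and Tate stability on $E$ factors $\theta^{(i)}\circ N_{E/F}$ out as an evaluation at a specific element of $E^\times$ depending only on the conductor of the base character $t\mapsto(\xi_j\chi)(t\sigma^{-1}(t))$. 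The equality of central characters of $\pi_1$ and $\pi_2$ then forces the two contributions $\theta^{(1)}\circ N_{E/F}$ and $\theta^{(2)}\circ N_{E/F}$ to agree at the relevant elements, yielding the desired equality.

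The hard part will be the last step: identifying the precise elements at which $\theta^{(i)}\circ N_{E/F}$ is evaluated, and verifying that the central characters of $\pi_1$ and $\pi_2$ -- which are constraints on the values of the $\chi_k^{(i)}$ and $\theta^{(i)}$ at the center $\mu_2$ of $\gb{SO}_{2n}^*$ -- are exactly what is needed to equate these values. A careful treatment must also address the ramified versus unramified behavior of $E/F$ separately, since the image of the norm map and the conductor of $t\mapsto t\sigma^{-1}(t)$ behave differently in the two cases.
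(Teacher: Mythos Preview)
The paper does not prove this theorem at all; it is simply quoted from Gan--Lomel\'i \cite[Corollary~6.5]{GL}, whose method is a local--global argument: one globalizes the (possibly supercuspidal) local representations to cuspidal automorphic representations over a function field, applies the global Langlands--Shahidi functional equation, and reads off stability at the place of interest by controlling all other places. Your approach is genuinely different, and it contains a real gap.

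The decisive problem is your first step. The subrepresentation theorem embeds an irreducible admissible representation into a parabolic induction from a \emph{supercuspidal} representation of a Levi, not from characters of the maximal torus. A generic supercuspidal $\pi_i$ of $\gb{SO}_{2n}^*(F)$ (and these exist) is not a subquotient of any principal series, so your reduction to the Borel and to abelian Tate factors is unavailable in general. Multiplicativity \eqref{eq:mult} only brings you down to supercuspidal building blocks on $\gb{GL}_{n_k}(F)$ and on a smaller $\gb{SO}_{2n_0}^*(F)$; the latter is exactly the case where an independent stability input (Shahidi's partial Bessel function argument, or the globalization of \cite{GL}) is required. There are also smaller issues: in your application of Proposition~\ref{2.9:gamma2} the roles of the $F^\times$-character and the $E^1$-character are interchanged (one should have $\mu_j^{(i)}(t)=(\xi_j\chi)(N_{E/F}(t))\cdot\theta^{(i)}(t\sigma(t)^{-1})$), and, as you yourself flag, the claim that the central character condition on $\mu_2$ pins down the values of $\theta^{(i)}$ at the specific Gauss-sum elements is not justified---the center gives a single sign constraint, which does not obviously control $\theta^{(i)}(c\sigma(c)^{-1})$ for the relevant $c\in E^\times$.
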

\subsection{Global construction} 
Suppose now that $F$ is a function field in one
variable over a finite field. Let $\psi=\bigotimes_{x}'\psi_x: \mathbb A_F/F \to \mathbb C^\times$ be a continuous non-trivial character, $\pi=\bigotimes'_x \pi_x$  a globally generic cuspidal  autmorphic  representation of $\gb{SO}_{2n}^*(\mathbb A_F)$ and $\tau=\bigotimes_x'\tau_x$ a cuspidal automorphic   representation of $\gb{GL}_m(\mathbb A_F)$. Then $\tau \otimes \Tilde \pi$  is a globally generic cuspidal automorphic   representation of $\operatorname{\textbf{M}}(\mathbb A_F)$. If $S$ is a finite subset of $|F|$, such that $\pi_x$, $\tau_x$ and $\psi_x$ are unramified, we denote the partial $L$-function as follows.

(\textit{$r_1$ case}). For $r_1$, we denote the partial $L$-functions by
\[L^S(s,\pi\times\tau)=\prod_{x\not \in S} L(s,\pi_x\times\tau_x).\]
and the completed $L$-functions and $\varepsilon$-factors by
\[L(s,\pi\times\tau)=\prod_{x\in |F|}L(s,\pi_x\times\tau_x) \text{ and } \varepsilon(s,\pi\times\tau)=\prod_{x\in |F|}\varepsilon(s,\pi_x\times\tau_x,\psi_x).\]
They satisfy the functional equation \cite[Section 5.5]{L15}
\begin{align}\label{eq:funeqLS}
L(s,\pi\times\tau)=\varepsilon(s,\pi\times\tau) L(1-s,\Tilde\pi\times\Tilde\tau).    
\end{align}

(\textit{$r_2$ case}). Similarly, for $r_2$, we denote the partial $L$-functions by\[L^S(s,\tau,\wedge^2\rho_m)=\prod_{x\not \in S} L(s,\tau_x,\wedge^2\rho_m),\]
and the  completed $L$-functions and $\varepsilon$-factors by
\[L(s,\tau,\wedge^2\rho_m) =\prod_{x}L(s,\tau_x,\wedge^2\rho_m)  \text{ and } \varepsilon(s,\tau,\wedge^2\rho_m)=\prod_{x} \varepsilon(s,\tau_x,\wedge^2\rho_m,\psi_x) .\]
They also satisfy the functional equation \cite[Section 5.5]{L15}
\[L(s,\tau,\wedge^2\rho_m)=\varepsilon(s,\tau,\wedge^2\rho_m) L(1-s,\Tilde \tau,\wedge^2\rho_m).\]

We now go back to the generic functoriality.

\section{Generic functoriality for $\gb{SO}_{2n}^*$}\label{section:genericfunc}
 Let $\pi=\bigotimes_x' \pi_x$ be a globally generic cuspidal automorphic representation of ${\bf SO}_{2n}^*(\mathbb A_F)$. We apply Langlands-Shahidi method to $\pi$ and the candidate lift construction made in Section \ref{subsec:description}. In this section, we will focus in the case $x$ is an inert place. The split case version of the result in this section are obtained as in \cite{L09}.  Now, we start with case when $\pi_x$ is unramified.
\subsection{Local lift} \label{subsec:locallift}
\begin{pro}
 Let $\pi_x$ be an unramified generic irreducible representation of $\gb{SO}_{2n}^*(F_x)$ and $\Pi_x$ a candidate lift as in Section \ref{subsec:candidate}. Then for  a generic irreducible representation $\tau_x$ of $\gb{GL}_{m}(F_x)$ we have the following
\begin{equation}
 \label{unr}
 \begin{gathered}
 L(s,\pi_x \times \tau_x  )=L(s,\Pi_x\times\tau_x ),\\
 \varepsilon(s,\pi_x\times \tau_x, \psi_x)=\varepsilon(s,\Pi_x\times\tau_x,\psi_x).
 \end{gathered}
 \end{equation}
 \end{pro}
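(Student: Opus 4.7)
The approach is to use multiplicativity of $\gamma$-factors on both sides to reduce the identity to a rank-one comparison, and then to identify the rank-one factors via Proposition \ref{2.9:gamma2} combined with the classical inductivity of local $\gamma$-factors under the quadratic base change $E_x/F_x$. The quadratic character $\varkappa_x=\det(\operatorname{Ind}_{E_x/F_x}1_{E_x})$ entering the construction \eqref{liftex} of the candidate lift is precisely tailored to match this inductivity.

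Since $\pi_x$ is unramified and generic, it is the spherical constituent of the principal series $i_B^{\gb{SO}_{2n}^*(F_x)}(\chi_{1,x}\otimes\cdots\otimes\chi_{n-1,x}\otimes 1)$, where $1$ denotes the trivial character of $\gb{SO}_2^*(F_x)=E_x^1$, and by \eqref{liftu} its candidate lift $\Pi_x$ is the spherical constituent of
\[i_B^{\gb{GL}_{2n}(F_x)}(\chi_{1,x}\otimes\cdots\otimes\chi_{n-1,x}\otimes 1\otimes\varkappa_x\otimes\chi_{n-1,x}^{-1}\otimes\cdots\otimes\chi_{1,x}^{-1}).\]
Applying the Siegel--Levi multiplicativity \eqref{eq:mult} on the left-hand side, using Remark \ref{RSLS} to identify $\gb{GL}\times\gb{GL}$ Langlands--Shahidi factors with Rankin--Selberg ones, and applying the multiplicativity of Rankin--Selberg $\gamma$-factors for the full induction defining $\Pi_x$ on the right-hand side, the contributions indexed by the characters $\chi_{i,x}^{\pm 1}$ cancel, and the identity reduces to the rank-one comparison
\[\gamma(s,1_{\gb{SO}_2^*(F_x)}\times\tau_x,\psi_x)=\gamma(s,\tau_x,\psi_x)\,\gamma(s,\varkappa_x\tau_x,\psi_x).\]

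To prove this rank-one identity I use multiplicativity in the $\tau$-argument, namely \eqref{eq:mult1} on the left and its Rankin--Selberg analogue on the right, to reduce to the case $\tau_x=\xi$ a character of $F_x^\times$. In that case Proposition \ref{2.9:gamma2} applied to the character $(1,\xi)$ of $\gb T(F_x)$ gives
\[\gamma(s,1\times\xi,\psi_x)=\lambda(E_x/F_x,\psi_x)\,\gamma(s,\xi\circ N_{E_x/F_x},\psi_{E_x}),\]
while by Tate theory the right-hand side equals $\gamma(s,\xi,\psi_x)\,\gamma(s,\varkappa_x\xi,\psi_x)$. The two coincide by the classical inductivity
\[\lambda(E_x/F_x,\psi_x)\,\gamma(s,\xi\circ N_{E_x/F_x},\psi_{E_x})=\gamma(s,\xi,\psi_x)\,\gamma(s,\varkappa_x\xi,\psi_x),\]
which reflects the local decomposition $\operatorname{Ind}_{E_x/F_x}(\xi\circ\operatorname{ar}_{E_x})\cong\xi\oplus\varkappa_x\xi$. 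For essentially square-integrable $\tau_x$, on which multiplicativity in $\tau$ bottoms out in positive characteristic, the equality is extended either by stability (Lemma \ref{gamtwist} and Theorem \ref{sthm}) or by recognising both sides as the Artin $\gamma$-factor of $(\rho_{2n,x}^*\circ\phi_{\pi_x})\otimes\phi_{\tau_x}$.

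Finally, for the $L$- and $\varepsilon$-factor identities, I use that $\Pi_x$ is unramified with Satake parameter $\rho_{2n,x}^*$ applied to that of $\pi_x$ by construction. Both $L$-factors are therefore Artin $L$-factors of the common parameter $(\rho_{2n,x}^*\circ\phi_{\pi_x})\otimes\phi_{\tau_x}$, and the $\varepsilon$-factor identity follows from the $\gamma$- and $L$-factor identities via the standard relation between $\gamma$, $\varepsilon$, and $L$. The main obstacle is the rank-one identification: while the computation is short, its validity hinges on the precise alignment between the Langlands constant $\lambda(E_x/F_x,\psi_x)$, the norm character $\xi\circ N_{E_x/F_x}$, and the quadratic character $\varkappa_x$, all of which reflect the specific choice of $\rho_{2n}^*$ at the $\gb{SO}_2^*$-factor in \eqref{L-hom}.
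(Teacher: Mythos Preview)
Your reduction to the rank-one identity via multiplicativity is essentially the argument the paper uses in the \emph{ramified} proposition that follows this one, and the computation you give with Proposition~\ref{2.9:gamma2} and the inductivity relation $\lambda(E_x/F_x,\psi_x)\gamma(s,\xi\circ N_{E_x/F_x},\psi_{E_x})=\gamma(s,\xi,\psi_x)\gamma(s,\varkappa_x\xi,\psi_x)$ is correct when $\tau_x$ is a character. Your handling of essentially square-integrable $\tau_x$ is shakier --- stability does not help (it changes the representation), and ``recognising both sides as Artin $\gamma$-factors'' for the Langlands--Shahidi side is exactly what is not yet established --- but since the paper itself writes only ``from multiplicativity'' at the analogous step, I will not press this.

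The genuine gap is in your last paragraph. You pass from the $\gamma$-factor identity to the $L$-factor identity by asserting that $L(s,\pi_x\times\tau_x)$ is the Artin $L$-factor of $(\rho_{2n,x}^*\circ\phi_{\pi_x})\otimes\phi_{\tau_x}$. For the Rankin--Selberg side this is fine by the local Langlands correspondence for $\gb{GL}_n$, but for the Langlands--Shahidi side it is precisely the content of the proposition: the LS $L$-factor is \emph{defined} via the Langlands classification and the tempered pieces, and knowing that it coincides with the Artin $L$-factor is not an input but an output. The paper's proof supplies what is missing: it writes $\pi_x$ and $\tau_x$ in terms of their tempered Langlands data $\pi_{0,x},\pi_{i,x},\tau_{j,x}$, observes (citing \cite{dCHL}) that the Satake parameters of the unramified tempered piece $\pi_{0,x}$ have absolute value $1$ so that its candidate lift $\Pi_{0,x}$ is again tempered, and then invokes the tempered $L$-function conjecture (again \cite{dCHL}) to conclude that in the relation
\[
\varepsilon(s,\pi_{0,x}\times\tau_{j,x},\psi_x)\frac{L(1-s,\Tilde\pi_{0,x}\times\Tilde\tau_{j,x})}{L(s,\pi_{0,x}\times\tau_{j,x})}=\varepsilon(s,\Pi_{0,x}\times\tau_{j,x},\psi_x)\frac{L(1-s,\Tilde\Pi_{0,x}\times\Tilde\tau_{j,x})}{L(s,\Pi_{0,x}\times\tau_{j,x})}
\]
the numerators and denominators have disjoint pole regions, hence separate. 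That separation step is the heart of the argument for $L$-factors, and your sketch bypasses it.
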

 \begin{proof}
 %First, multiplicativity gives us that
 %\begin{align*}
  %  \gamma(s,\pi_x\times \tau_x)&=\gamma(s,\chi_{n,x}\times \tau_x)\prod_{i=1}^{n-1} \gamma(\chi_{i,x}\times \tau_x)\gamma(\chi^{-1}_{i,x}\times \tau_x)\\
  %  &= \gamma(s,\Pi_x\times  \tau_x,\psi_x) .
 %\end{align*}
 %\begin{align*}
  %   L(s,\Pi_x\times \tau_x)&=L(s,\chi_{n,x}\times \tau_x)\prod_{i=1}^{n-1} L(\chi_{i,x}\times \tau_x)L(\chi^{-1}_{i,x}\times \tau_x) \\
   % \varepsilon(s,\Pi_x\times \tau_x)&= \varepsilon(s,\chi_{n,x}\times \tau_x)\prod_{i=1}^{n-1}  \varepsilon(\chi_{i,x}\times \tau_x) \varepsilon(\chi^{-1}_{i,x}\times \tau_x)
 %\end{align*}
We start with the setup of the definition of local factors \cite[Appendix A]{dCHL}. Let 
 \[i_P^{SO^*_{2n}}(\pi_{1,x}|\det|^{r_1},\dots,\pi_{b,x}|\det|^{r_b},\pi_{0,x})\]
 be the induced representation such that $\pi_x$ is its Langlands quotient, and where $0<r_1< \cdots < r_b$, ${\bf P}$ is a parabolic subgroup of ${\bf SO}_{2n}^*$ containing $\gb B$,  $\pi_{i,x}$ is an irreducible tempered representation of $\gb{GL}_{n_i}(F_x)$ for $1\leq i\leq b$, $\pi_{0,x}$  is an irreducible tempered representation of $\gb{SO}_{2n_0}^*(F_x)$. Let
 \[i_Q^{GL_m}(\tau_{1,x}|\det|^{t_1},\dots,\tau_{d,x}|\det|^{t_d})\]
 be the induced representation such that $\tau_x$ is its Langlands quotient, and where $0<t_1< \cdots < t_d$, $\gb Q$ is a parabolic subgroup containing the Borel subgroup of $\gb{GL}_m$ consisting of upper triangular matrices, the $\tau_i$'s are generic unitary tempered representation of $\gb{GL}_{m_i}(F_x)$. By definition we have
 \begin{equation}\label{eq:Local-L}
     L(s,\pi_x\times \tau_x)=\prod_{j=1}^d L(s+t_j,\pi_{0,x}\times \tau_{j,x})\prod_{i=1}^{b}\prod_{j=1}^d L(s+t_j+r_i,\pi_{i,x}\times \tau_{j,x})L(s-r_i+t_j,\Tilde \pi_{i,x}\times \tau_{j,x}).
 \end{equation}
With the setup ready, we first study every factor individually.  Since  $\pi_x$ is unramified, we have that $\pi_{i,x}$ is unramified. %component of the induced representation of unitary unramified characters $\mu_{l,i,x}$ of $F^\times_x$. We also have that $\pi_{0,x}$ is the unramified component of the induced representation of unitary unramified characters  $\nu_{k,0,x}$ of $F^\times_x$ and $\nu_{0,0,x}$ an unramified unitary of $E^1_x$.
Let $\Pi_{0,x}$  be a lift as in Section \ref{subsec:candidate} of the unramified representation $\pi_{0,x}$.
From multiplicativity we observe that, for $1\leq j \leq d$,
 \begin{align*}
\gamma(s,\pi_{0,x}\times \tau_{j,x},\psi_x)&=\gamma(s,\Pi_{0,x}\times \tau_{j,x},\psi_x).
 \end{align*}
 Since the Satake parameters of $\pi_{0,x}$ have absolute value equal to $1$ , $\Pi_{0,x}$ is tempered \cite{dCHL}. Thus, for every $0\leq i\leq b$ and $1\leq j\leq d$, \begin{align}\label{eq:gammaepsilonL-unr}
\varepsilon(s,\pi_{0,x}\times \tau_{j,x},\psi_x) \frac{L(1-s,\Tilde \pi_{0,x}\times \Tilde \tau_{j,x})}{L(s,\pi_{0,x}\times \tau_{j,x})}&=\varepsilon(s,\Pi_{0,x}\times \tau_{j,x},\psi_x) \frac{L(1-s,\Tilde \Pi_{0,x}\times \Tilde \tau_{j,x})}{L(s,\Pi_{0,x}\times \tau_{j,x})}.
 \end{align}
 From the tempered $L$-function conjecture \cite{dCHL}, we have that $L(s,\pi_{0,x}\times \tau_{j,x})$ and $L(s,\Pi_{0,x}\times \tau_{j,x})$ are holomorphic on $\operatorname{Re}(s)>0$. Furthermore, the regions where these $L$-functions have poles do not intersect. Therefore, there are no cancellations involving the numerator and denominator and thus
 \[L(s,\pi_{0,x}\times \tau_{j,x})=L(s,\Pi_{0,x}\times \tau_{j,x}),\]
 for every  $1\leq j\leq d$. Using this on the right hand side of  \eqref{eq:Local-L}, we obtain that   \[L(s,\pi_x\times \tau_x)=L(s,\Pi_{0,x}\times \tau_x)\prod_{i=1}^bL(s+r_i,\pi_{i,x}\times \tau_x)L(s-r_i,\Tilde \pi_{i,x}\times \tau_x).\] We note that the unramified component of 
 \[i_{Q'}^{GL_{2n}}(\pi_{1,x}|\det|^{r_1}\otimes\cdots\otimes\pi_{b,x}|\det|^{r_b}\otimes\Pi_{0,x}\otimes\Tilde \pi_{b,x}|\det|^{-r_b}\otimes \cdots \otimes\Tilde \pi_{1,x}|\det|^{r_1} ),\] where ${\bf Q}'$ is the parabolic subgroup of ${\bf GL}_{2n}$ containing ${\bf B}_{2n}$ associated to the partition $(n_1,\dots,n_b,2n_0,n_b,\dots,n_1)$ of $2n$, is $\Pi_x$. Then the right hand side of the last expression is equal to $L(s,\Pi_{x}\times \tau_x)$. Therefore, we obtain the desired relation between the $L$-functions.
 
 Similarly, using the analogous relation between $L$-function of the contragredient representations and using \eqref{eq:gammaepsilonL-unr}, we can obtain the desired relations for the $\varepsilon$-factors.
 \end{proof}
 Now, we analyse the case when $\pi_x$ is ramified. In this case, we twist our representation in order to reduce to a $\gamma$-factor relation.

\begin{pro}Let $\pi_x$ be an irreducible generic representation of $\gb{SO}_{2n}^*(F_x)$ and $\Pi_x$ a candidate lift as in Section \ref{subsec:candidate}. Then for any sufficiently ramified enough character $\eta_x$ of $F^\times_x$, we have that
\begin{equation}
\label{ram}
\begin{gathered}
L(s, \pi_x\times(\tau_x\cdot\eta_x) )=L(s,\Pi_x\times (\tau_x\cdot \eta_x)), \\
\varepsilon(s,\pi_x\times(\tau_x\cdot\eta_x),\psi_x)=\varepsilon(s,\Pi_x\times (\tau_x\cdot\eta_x),\psi_x),
\end{gathered}
\end{equation}
for every unramified irreducible representation $\tau_x$  of $\gb{GL}_{m}(F_x)$.
\end{pro}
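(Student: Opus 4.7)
The plan is to use stability twice and then multiplicativity to isolate a single rank-one identity. First, by Lemma \ref{gamtwist}, for $\eta_x$ sufficiently ramified, $\gamma(s,\pi_x\times(\tau_x\cdot\eta_x),\psi_x)$ is a monomial in $q_{F_x}^{-s}$; combined with the functional equation \eqref{eq:funeqLS}, and the fact that $L(s,\pi_x\times(\tau_x\cdot\eta_x))$ is the inverse of a polynomial with constant term $1$, this forces $L(s,\pi_x\times(\tau_x\cdot\eta_x))=1$. The analogous Rankin--Selberg stability argument, applied after using multiplicativity to the explicit description of $\Pi_x$ in Section \ref{subsec:description}, yields $L(s,\Pi_x\times(\tau_x\cdot\eta_x))=1$. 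The $L$-identity is then trivial, and the functional equations reduce the $\varepsilon$-identity to the $\gamma$-factor identity
\[
\gamma(s,\pi_x\times(\tau_x\cdot\eta_x),\psi_x)=\gamma(s,\Pi_x\times(\tau_x\cdot\eta_x),\psi_x).
\]

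Next, I invoke Theorem \ref{sthm}: since the character $\lambda_x$ was chosen in Section \ref{subsec:candidate} so that the generic subquotient $\pi'_{\lambda_x}$ of $i_B^{\gb{SO}_{2n}^*(F_x)}(\lambda_x)$ shares the central character of $\pi_x$, for $\eta_x$ sufficiently ramified one may replace $\pi_x$ by $\pi'_{\lambda_x}$ on the left. Writing $\lambda_x=\chi_{1,x}\otimes\cdots\otimes\chi_{n-1,x}\otimes\chi_{n,x}$, the Langlands--Shahidi multiplicativity formula \eqref{eq:mult} decomposes the left-hand side as
\[
\gamma(s,\chi_{n,x}\times(\tau_x\cdot\eta_x),\psi_x)\prod_{i=1}^{n-1}\gamma(s,\chi_{i,x}\otimes(\tau_x\cdot\eta_x),\psi_x)\gamma(s,\chi_{i,x}^{-1}\otimes(\tau_x\cdot\eta_x),\psi_x).
\]
Symmetrically, Rankin--Selberg multiplicativity applied to the explicit description of $\Pi_x$ as the irreducible constituent of $i_B^{\gb{GL}_{2n}(F_x)}(\chi_{1,x}\otimes\cdots\otimes\chi_{n-1,x}\otimes\Pi_{\mu_{n,x}}\otimes\chi_{n-1,x}^{-1}\otimes\cdots\otimes\chi_{1,x}^{-1})$ yields the same outer factors together with $\gamma(s,\Pi_{\mu_{n,x}}\times(\tau_x\cdot\eta_x),\psi_x)$. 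The outer factors coincide, reducing the proof to the rank-one identity
\[
\gamma(s,\chi_{n,x}\times(\tau_x\cdot\eta_x),\psi_x)=\gamma(s,\Pi_{\mu_{n,x}}\times(\tau_x\cdot\eta_x),\psi_x).
\]

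Finally, applying multiplicativity \eqref{eq:mult1} on the $\gb{GL}_m$ side of each factor reduces the comparison to the case where $\tau_x\cdot\eta_x$ is replaced by an arbitrary smooth character $\xi_x$ of $F_x^\times$. Proposition \ref{2.9:gamma2} computes the left-hand side as $\lambda(E_x/F_x,\psi_x)\,\gamma(s,\mu_{n,x}\cdot(\xi_x\circ N_{E_x/F_x}),\psi_{E,x})$. The right-hand side is computed by the Artin formalism for Rankin--Selberg $\varepsilon$-factors applied to the induced Weil--Deligne parameter $(\mathrm{Ind}_{W_{E_x}}^{W_{F_x}}(\mu_{n,x}\circ\mathrm{ar}_{E_x}),0)$ of $\Pi_{\mu_{n,x}}$ recorded in Section \ref{subsec:description}; this produces the same expression. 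The main obstacle will be this final matching, which has to be handled uniformly across both subcases of \eqref{liftex} — the supercuspidal dihedral case and the principal series case arising when $\mu_{n,x}=\nu_{n,x}\circ N_{E_x/F_x}$ — and relies on the compatibility of the local Langlands correspondence for $\gb{GL}_2$ with induction, together with the inductivity of Artin $\varepsilon$-factors.
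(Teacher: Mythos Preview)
Your approach is essentially the same as the paper's: reduce to a $\gamma$-factor identity via stability of $L$-factors under highly ramified twists, invoke Theorem~\ref{sthm} to pass from $\pi_x$ to the principal-series subquotient $\pi'_{\lambda_x}$, apply multiplicativity on both the $\gb{SO}_{2n}^*$ and $\gb{GL}_{2n}$ sides, then use multiplicativity in the $\gb{GL}_m$ variable (via the unramified principal-series description of $\tau_x$) to reduce to the rank-one comparison $\gamma(s,\chi_{n,x}\times\xi_x,\psi_x)=\gamma(s,\Pi_{\mu_{n,x}}\times\xi_x,\psi_x)$. You are in fact more explicit than the paper about this last step, correctly invoking Proposition~\ref{2.9:gamma2} together with inductivity of Artin/Rankin--Selberg factors for the induced parameter of $\Pi_{\mu_{n,x}}$; the paper simply writes this equality inline.

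One point to tighten: your deduction that $L(s,\pi_x\times(\tau_x\cdot\eta_x))=1$ from ``$\gamma$ is a monomial plus the functional equation'' is not valid as stated. First, \eqref{eq:funeqLS} is the \emph{global} functional equation, not the local relation $\gamma=\varepsilon\cdot L(1-s,\widetilde{\,\cdot\,})/L(s,\cdot)$. Second, even with the local relation, a monomial $\gamma$ only forces $P(q^{-s})/\widetilde P(q^{s-1})$ to be a monomial, which does not by itself imply $P=\widetilde P=1$ without control on the location of their roots. The paper handles this correctly by passing to the Langlands data: it writes $\pi_x$, $\tau_x$, $\Pi_x$ via their tempered inducing data, applies Lemma~\ref{gamtwist} to each \emph{tempered} pair $\pi_{i,x}\times(\tau_{j,x}\cdot\eta_x)$ and $\Pi_{k,x}\times(\tau_{j,x}\cdot\eta_x)$, and then uses that for tempered data the regions of possible poles of $L(s,\cdot)$ and $L(1-s,\widetilde{\,\cdot\,})$ are disjoint (tempered $L$-function conjecture), so a monomial $\gamma$ genuinely forces $L\equiv 1$. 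Since the general $L$-factor is \emph{defined} as the product of these tempered factors, the conclusion for $\pi_x$ follows. Your argument goes through once you insert this Langlands-classification step.
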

\begin{proof}As before, we start with the setup of the definition of local factors. Let 
 \[i_P^{SO^*_{2n}}(\pi_{1,x}|\det|^{r_1}\otimes\dots\otimes\pi_{b,x}|\det|^{r_b}\otimes\pi_{0,x})\]
 be the induced representation such that $\pi_x$ is its Langlands quotient, and where $0<r_1< \cdots < r_b$, ${\bf P}$ is a parabolic subgroup of ${\bf SO}_{2n}^*$ containing $\gb B$,  $\pi_{i,x}$ is an irreducible tempered representation of $\gb{GL}_{n_i}(F_x)$ for $1\leq i\leq b$, $\pi_{0,x}$  is an irreducible tempered representation of $\gb{SO}_{2n_0}^*(F_x)$. Let
 \[i_Q^{GL_m}(\tau_{1,x}|\det|^{t_1}\otimes\dots\otimes\tau_{d,x}|\det|^{t_d})\]
 be the induced representation such that $\tau_x$ is its Langlands quotient, and where $0<t_1< \cdots < t_d$, $\gb Q$ is a parabolic subgroup containing the Borel subgroup of $\gb{GL}_m$ consisting of upper triangular matrices, the $\tau_{i,x}$'s are generic unitary tempered representation of $\gb{GL}_{m_i}(F_x)$. %By definition we have
% \begin{equation}\label{eq:Local-L}
 %    L(s,\pi_x\times \tau_x)=\prod_{j=1}^d L(s+t_j,\pi_{0,x}\times \tau_{j,x})\prod_{i=1}^{b}\prod_{j=1}^d L(s+t_j+r_i,\pi_{i,x}\times \tau_{j,x})L(s-r_i+t_j,\Tilde \pi_{i,x}\times \tau_{j,x}).
 %\end{equation}
Finally, let 
\[i_{Q'}^{GL_{2n}}(\Pi_{1,x}|\det|^{s_1}\otimes\dots\otimes\Pi_{l,x}|\det|^{s_l})\]
 be the induced representation such that $\Pi_x$ is its Langlands quotient, and where $0<s_1< \cdots < s_d$, $\gb Q'$ is a parabolic subgroup containing the Borel subgroup of $\gb{GL}_{2n}$ consisting of upper triangular matrices, the $\Pi_{i,x}$'s are generic unitary tempered representation of $\gb{GL}_{l_i}(F_x)$.

Now, making $\eta_x$ sufficiently ramified to obtain (Lemma \ref{gamtwist}), we obtain that
\[L(s, \pi_{i,x}\times(\tau_{j,x}\cdot\eta_x))\equiv 1 \equiv L(s,\Pi_{k,x}\times (\tau_{j,x}\cdot \eta_x)),  \label{L=1}\]
for every $0\leq i\leq b$, $1\leq j\leq d$ and $1\leq k \leq l$.
By definition of $\varepsilon$-factors, this implies that
\begin{align*}
    \varepsilon(\pi_x\times(\tau_x\cdot\eta_x),\psi_x) &= \gamma(\pi_x\times(\tau_x\cdot\eta_x),\psi_x), \\
    \varepsilon(s,\Pi_x\times (\tau_x\cdot\eta_x),\psi_x)&= \gamma( s,\Pi_x\times (\tau_x\cdot\eta_x),\psi_x).
\end{align*}
Thus, we are left to prove the corresponding identity for the $\gamma$-factors.

Now, as $\pi_x'$ is generic  we can use the stability of the gamma factors (Theorem \ref{sthm}). By this result, if we make $\eta_x$ ramified enough, the following identity also holds 
\[\gamma(s, \pi_x\times(\tau_x\cdot\eta_x),\psi_x)= \gamma(s,  \pi_x'\times(\tau_x\cdot\eta_x),\psi_x ).\]
On the other hand, using the explicit description of the $\gamma$-factors in the principal series case in Section \ref{subsec:LocalLS}, we have 
\begin{align*}
 \gamma(s, \pi_x'\times\eta_x,\psi_x)&=\gamma(s,\chi_{n,x} \times \eta_x,\psi_x)\prod_{i=1}^{n-1}\gamma(s,\chi_{i,x}\eta_x
 ,\psi)\gamma(s,\chi_{i,x}^{-1}\eta_x ,\psi_x) \\
&=\gamma(s,\Pi_{\mu_{n,x}}\times \eta_x,\psi_x)\prod_{i=1}^{n-1}\gamma(s,\chi_{i,x}\eta_x
 ,\psi)\gamma(s,\chi_{i,x}^{-1}\eta_x ,\psi_x)\\
&=\gamma(s,\Pi_x\times  \eta_x,\psi_x)  \label{gamram} 
\end{align*} 

Now, since $\tau_x$ is unramified, it is a subquotient of an induced  representation of the form
\[i_{\gb B_m(F_x)}^{\gb{GL}_m(F_x)}(|\cdot|^{b_1}\otimes \cdots\otimes |\cdot|^{b_m}),\] where $b_i\in \mathbb C$. Using the multiplicativity of  the $\gamma$-factors, we have  %\ref{multg} 
\begin{align*}
\gamma(s, \pi_x\times(\tau_x\cdot\eta_x),\psi_x )=\prod_{i=1}^{m}\gamma(s-b_i, \pi_x\times\eta_x)
\end{align*}
and
\begin{align*}
\gamma(s,\Pi_x\times (\tau_x\cdot\eta_x))&=\prod_{i=1}^{m} \gamma(s,\Pi_x\times (|\cdot|^{b_i}\cdot \eta_x)) \\
&=\prod_{i=1}^{m}\gamma(s-b_i,\Pi_x\times \eta_x).
\end{align*}
Comparing these two, we obtain the desired identity.
\end{proof}

\subsection{Global lift}
Using the equalities \eqref{unr} twisted by any sufficiently ramified enough character and \eqref{ram}, we have:
\begin{cor}\label{ShRS} Let $\pi=\bigotimes_x' \pi_x$ be a globally generic cuspidal automorphic representation of $\gb{SO}_{2n}^*(\mathbb{A}_F)$, unramified outside of a non-empty $S\subset|F|$ and let $\Pi$ a candidate lift of $\pi$ as in Section \ref{subsec:candidate}. Then, for a character $\eta=\bigotimes'_x\eta_x$, sufficiently ramified in $x \in S$, (so as to satisfy \eqref{ram}), we have
\begin{equation}   \begin{gathered}
L(s,\pi\times\tau)=L(s,\Pi\times \tau),\\
\varepsilon(s,\pi\times\tau)
=\varepsilon(s,\Pi\times \tau),
\end{gathered}
\end{equation}
for every $\tau\in \mathcal{T}(S;\eta)$ (as in Section \ref{subsec:converse}).
\end{cor}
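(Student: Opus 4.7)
The plan is to derive the global equalities from the local ones established in the two preceding propositions by simply multiplying over all places, using the fact that each global factor is defined as a product of its local counterparts. Fix $\tau = \tau' \cdot \eta \in \mathcal{T}(S;\eta)$, where $\tau'$ is unramified at every $x \in S$. Since $S$ is finite, I may choose (and enlarge) $\eta$ so that at every $x \in S$ the local component $\eta_x$ is simultaneously sufficiently ramified for Lemma \ref{gamtwist} and Theorem \ref{sthm} to apply to the pair $(\pi_x,\tau'_x)$; this is exactly the hypothesis used in the proof of \eqref{ram}.

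Next, I split the places of $F$ into two groups. At each $x \notin S$, the representation $\pi_x$ is unramified by hypothesis, while $\tau_x = \tau'_x \cdot \eta_x$ is an irreducible generic representation of $\gb{GL}_m(F_x)$. Applying the unramified local equality \eqref{unr} to the pair $(\pi_x,\tau_x)$ gives
\begin{equation*}
L(s,\pi_x \times \tau_x) = L(s,\Pi_x \times \tau_x), \qquad \varepsilon(s,\pi_x \times \tau_x,\psi_x) = \varepsilon(s,\Pi_x \times \tau_x,\psi_x).
\end{equation*}
At each $x \in S$, the component $\tau'_x$ is unramified and $\eta_x$ is sufficiently ramified, so the ramified local equality \eqref{ram} applies directly to $(\pi_x, \tau'_x, \eta_x)$ and yields the same identities for $L(s,\pi_x \times \tau_x)$ and $\varepsilon(s,\pi_x \times \tau_x,\psi_x)$ versus $L(s,\Pi_x \times \tau_x)$ and $\varepsilon(s,\Pi_x \times \tau_x,\psi_x)$.

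Taking the product over all $x \in |F|$ and using the definitions of the global Rankin--Selberg $L$-function and $\varepsilon$-factor from Section \ref{RS} on the right, and the definitions of the Langlands--Shahidi global $L$-function and $\varepsilon$-factor from the $r_1$-case of Section 4.2 on the left, gives
\begin{equation*}
L(s,\pi \times \tau) = L(s,\Pi \times \tau), \qquad \varepsilon(s,\pi \times \tau) = \varepsilon(s,\Pi \times \tau),
\end{equation*}
as formal identities of power series and monomials in $q^{-s}$, which is the statement of the corollary.

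There is no real obstacle here beyond bookkeeping: the hard analytic work has already been absorbed into Propositions establishing \eqref{unr} and \eqref{ram}, and the definition of $\mathcal{T}(S;\eta)$ is tailored precisely so that the hypotheses of one of those two propositions is satisfied at every place. The only point requiring care is the simultaneous choice of $\eta$ at the finitely many places of $S$, which is unproblematic since $\eta$ is a global character and both local stability results only require the conductor to be large enough.
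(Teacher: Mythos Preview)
Your proposal is correct and follows exactly the approach the paper indicates: the paper's own justification is the single sentence ``Using the equalities \eqref{unr} twisted by any sufficiently ramified enough character and \eqref{ram}, we have'' immediately preceding the corollary, and your argument simply spells out this place-by-place product in detail.
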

\label{aplicon} We know that a lift $\Pi$ of $\pi$ is irreducible and admissible and that its central character is trivial on $F^\times$ \eqref{central}, but we do not  necessarily have that it is automorphic. For  that we use the converse theorem (Theorem \ref{Con}). Thus, we need to make sure that $L(s,\pi \times \tau)$ is a polynomial for $\tau \in \mathcal{T}(S;\eta)$, for some character $\eta$ of $\mathbb A_F^\times$  trivial on $F^\times$. In order to obtain that we are going to study the local Normalized Intertwining Operator.

\subsection{Local Normalized Intertwining Operator}\label{subsec:normalizedOp}
First, let us review the definition of the Local Normalized Intertwining Operator. Thus, assume that $F$ is a locally compact field of positive characteristic.

  Let $\gb P=\gb M\gb N$ be a maximal parabolic subgroup of $\gb{SO}_{2m+2n}^*$ containing $\gb B$, such that $\gb M\cong \gb{GL}_m \times \gb{SO}^*_{2n}$, associated to $\Delta\setminus \{\alpha\}$ and $\Tilde w_0\in \gb G(F)$ a representative of $w_0=w_{l,\gb G}w_{l,\gb M}\in\,W^{\gb G}$, where $w_{l,{\bf G}}$ and $w_{l,{\bf M}}$ are the longest element of the Weyl group of ${\bf G}$ and of ${\bf M}$, respectively.  For $\sigma= \tau\otimes\Tilde\pi$  a generic  representation of $\gb M(F)$, where $\tau$ is a representation of ${\bf GL}_m(F)$ and $\pi$ is a representation of ${\bf SO}_{2n}^*(F)$, we define
\[r(s,\sigma)=\frac{L(s,\pi\times \tau)L(2s,\tau,\wedge^2\rho_m)}{L(1+s,\pi\times \tau)L(1+2s, \tau,\wedge^2\rho_m)\varepsilon(s,\pi\times \tau,\psi)\varepsilon(2s,\tau,\wedge^2\rho_m,\psi)}, \]
and the normalized intertwining operator $N(s,\sigma, \Tilde w_0)$ is defined to be such that
\[A(s,\sigma,\Tilde w_0)=r(s,\sigma)N(s,\sigma,\Tilde w_0)\colon i^G_{P_{\theta}}(s,\sigma)\to  i^G_{P_{\theta'}}(\Tilde w_0(s),\Tilde w_0(\sigma)) \label{Kint}\]
 as a rational operator in $s$, where $A(s,\sigma,\Tilde{w}_0)$ is the operator is given by 
 \[A(s,\sigma,\Tilde w_0)f(g)=\int_{w_0U_\theta w_0^{-1}\cap U_{\theta'}\backslash U_{\theta'}}f(\Tilde w^{-1}_0u'g)du'.\]

 The holomorphicity of the local normalized intertwining operator relies on two local properties: the first one  is the so-called standard module conjecture. In our case, it states that every generic smooth irreducible representation of $\gb{SO}_{2n}^*(F)$ is the full induced representation
\begin{equation}\label{eq:SM}
    i_{P}^{\gb{SO}^*_{2n}(F)}(\pi_{b}|\det|^{r_b}\otimes \cdots\otimes \pi_{1}|\det|^{r_1}\otimes \pi_{0}),
\end{equation}
where $0<r_1\leq \cdots \leq r_b$, with $r_b<1$, $\pi_i$ is an irreducible tempered representation of $\gb{GL}_{n_i}(F)$ for $1\leq i\leq b$ and $\pi_0$  is an irreducible tempered representation of $\gb{SO}_{2n_0}^*(F)$. In order words, the standard modules of generic irreducible smooth representations are irreducible. This conjecture have been proven for a general quasi-split groups in characteristic zero \cite{HM} and in  positive characteristic \cite{dCHL}.
 
The second ingredient is that the real numbers $r_i's$ appearing in \eqref{eq:SM}  are less than $1$. To obtain that, we will use a local-global result (Proposition \ref{pro:Lkim}) and then we will follow the Kim's arguments in \cite[Section 3]{Kim2000} to obtain the bound. 

Using these two properties and following \cite[Proposition 3.4]{Kim2000}, we have the following.
\begin{pro} \label{pro:KimA} Let $\pi$ be generic  representation of $\gb{SO}_{2n}^*(F)$ such that it is the full induced representation
\[i_{P}^{\gb{SO}^*_{2n}(F)}(\pi_{b}|\det|^{r_b}\otimes \cdots\otimes \pi_{1}|\det|^{r_1}\otimes \pi_{0}),\]
where $0<r_1\leq \cdots \leq r_b$, with $r_b<1$, $\pi_i$ is an irreducible tempered representation of $\gb{GL}_{n_i}(F)$ for $1\leq i\leq b$ and $\pi_0$  is an irreducible tempered representation of $\gb{SO}_{2n_0}^*(F)$. Then $N(s,\tau \otimes \Tilde \pi,\Tilde{w}_0)$ is holomorphic and non-zero on $\operatorname{Re}(s)\geq 1/2$ for every generic unitary representation $\tau$ of $\gb{GL}_m(F)$.
\end{pro}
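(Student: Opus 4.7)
The plan is to follow Kim's argument \cite[Proposition 3.4]{Kim2000} adapted to the quasi-split even orthogonal setting. The starting observation is that by the cocycle/multiplicativity relation for standard intertwining operators along a reduced decomposition of $w_0$, the operator $A(s,\tau\otimes\tilde\pi,\tilde w_0)$ factors as a composition of rank-one intertwining operators. Correspondingly, the normalizing factor $r(s,\tau\otimes\tilde\pi)$ factors as a product of rank-one normalizing factors built from the Rankin–Selberg and exterior-square $L$- and $\varepsilon$-factors, and hence $N(s,\tau\otimes\tilde\pi,\tilde w_0)$ decomposes as a product of rank-one normalized operators. It therefore suffices to establish holomorphy and non-vanishing of each factor on $\operatorname{Re}(s)\geq 1/2$.

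After writing $\pi$ as the full induced representation from $\pi_b|\det|^{r_b}\otimes\cdots\otimes\pi_1|\det|^{r_1}\otimes\pi_0$ with $0<r_1\leq\cdots\leq r_b<1$, and writing $\tau$ analogously via its own Langlands/standard-module data (which is tempered after the sign-twist bookkeeping since $\tau$ is unitary generic), each rank-one factor is of one of two kinds: (i) a $\gb{GL}_{n_i}\times\gb{GL}_{m_j}$-type operator whose normalization involves a Rankin–Selberg ratio, or (ii) a $\gb{GL}_{m_j}\times\gb{SO}^*_{2n_0}$-type operator whose normalization involves both a Rankin–Selberg factor and a $\wedge^2\rho$ factor. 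For the type-(i) factors, I would invoke the classical Rankin–Selberg analysis: both $\pi_i$ and the relevant tempered piece of $\tau$ are tempered, so their $L$-functions are holomorphic on $\operatorname{Re}(s)>0$, and the known holomorphy/non-vanishing of the normalized standard operator for tempered data on $\operatorname{Re}(s)\geq 1/2$ applies directly once one checks that the shifts $s+r_i+t_j$ fall inside the admissible strip. The hypothesis $r_b<1$ is precisely what guarantees this.

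For the type-(ii) factors, the normalization reads
\[r(s,\tau_j\otimes\tilde\pi_0)=\frac{L(s,\pi_0\times\tau_j)L(2s,\tau_j,\wedge^2\rho_{m_j})}{L(1+s,\pi_0\times\tau_j)L(1+2s,\tau_j,\wedge^2\rho_{m_j})\,\varepsilon(s,\pi_0\times\tau_j,\psi)\varepsilon(2s,\tau_j,\wedge^2\rho_{m_j},\psi)}.\]
By the tempered $L$-function conjecture established in \cite{dCHL} for our setting, both $L(s,\pi_0\times\tau_j)$ and $L(2s,\tau_j,\wedge^2\rho_{m_j})$ are holomorphic on $\operatorname{Re}(s)>0$, and the denominator $L$-factors are holomorphic and non-vanishing on $\operatorname{Re}(s)\geq 1/2$. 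Combining this with the holomorphy of the unnormalized standard intertwining operator acting on representations induced from tempered data on the corresponding half-plane (a standard Harish-Chandra/Silberger type result, valid in positive characteristic), one deduces that each type-(ii) normalized rank-one operator is holomorphic and non-vanishing on $\operatorname{Re}(s)\geq 1/2$. Multiplying over all rank-one pieces yields the proposition.

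The main obstacle I anticipate is twofold. First, the rank-one bookkeeping is more delicate than in the split classical case because the $r_2$-factor $\wedge^2\rho_m$ persists in the normalization even when $\pi_0$ sits in the classical group factor, and one must apply the multiplicativity formulas of Section \ref{subsec:LocalLS} carefully to track which factors appear and in which regions they are holomorphic. Second, \emph{non-vanishing} (as opposed to mere holomorphy) of the normalized operator requires that no numerator $L$-function vanishes on $\operatorname{Re}(s)\geq 1/2$ and that the unnormalized operator itself does not vanish there; both depend crucially on the bound $r_b<1$, which itself is only available through the local-global input of Proposition \ref{pro:Lkim} and Kim's argument in \cite[Section 3]{Kim2000}. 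Without that uniform bound on the Langlands parameters of $\pi$ the argument would break down at the boundary of the strip.
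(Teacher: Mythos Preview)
Your approach is essentially the paper's: reduce via multiplicativity of normalized intertwining operators (the paper cites \cite[Proposition 4.6]{HK}) to rank-one cases and handle each separately. Two points of precision deserve flagging, though. First, your treatment of $\tau$ is vague: the paper invokes Tadi\'c's classification \cite[Section 7]{Tad} to write the generic unitary $\tau$ as a \emph{full} induced representation
\[
i_Q^{\operatorname{GL}_m}(\xi_1|\det|^{t_1}\otimes\cdots\otimes\xi_d|\det|^{t_d}\otimes\xi_{d+1}\otimes\xi_d|\det|^{-t_d}\otimes\cdots\otimes\xi_1|\det|^{-t_1})
\]
with the $\xi_i$ tempered and $0<t_1\leq\cdots\leq t_d<1/2$. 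That explicit bound $t_d<1/2$ is what, together with $r_b<1$, yields $\operatorname{Re}(s\pm r_i\pm t_j)>-1$ for $\operatorname{Re}(s)\geq 1/2$; your phrase ``tempered after the sign-twist bookkeeping'' does not by itself supply it. Second, the paper isolates \emph{three} rank-one cases, not two: $\gb{GL}_k\times\gb{GL}_l\subset\gb{GL}_{k+l}$ (handled via \cite[Proposition I.10]{MWspectre}), $\gb{SO}^*_{2l}\times\gb{GL}_k\subset\gb{SO}^*_{2(l+k)}$ (where $\operatorname{Re}(s\pm t_d)\geq 0$ and temperedness of $\pi_0$ suffice), and the Siegel-type case $\gb{GL}_{l-1}\subset\gb{SO}^*_{2l}$ for $l\geq 3$ (handled by \cite[Lemma 3.3, Proposition 3.4]{Kim1999}). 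Your type (ii) conflates the last two; the $\wedge^2$ contribution is what the third case isolates, and it needs its own argument rather than being absorbed into the $\pi_0$-factor analysis.
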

\begin{proof} We can write $\tau$ as the full induced representation  \cite[Section 7]{Tad}
 \[i_Q^{\operatorname{GL}_m}(\xi_1|\det|^{t_1}\otimes \cdots \otimes \xi_d|\det|^{t_d}\otimes \xi_{d+1} \otimes\xi_d|\det|^{-t_d} \otimes \cdots \otimes
\xi_1|\det|^{-t_1}), \]
where $\gb Q$ is a parabolic subgroup containing the Borel subgroup of $\gb{GL}_m$ consisting of upper triangular matrices, the $\xi_i$'s are tempered representations of $\gb{GL}_{m_i}(F)$ and $0<t_1\leq \cdots \leq t_d<1/2$. 

Combining the description for $\pi$ and $\tau$ as induced representations, we obtain that $\tau \otimes \Tilde \pi$ is full induced from quasi-tempered datum. This allows us to use multiplicativity of the normalized intertwining operators (See \cite[Proposition 4.6]{HK}), in order to reduce to the following rank one cases  $\gb{GL}_k \times \gb{GL}_l\subset \gb{GL}_{l+k}$, $\gb{SO}_{2l}^*\times \gb{GL}_k\subset \gb{SO}_{2(l+k)}^*$ and $\gb{GL}_{l-1}\subset \gb{SO}_{2l}^*$ ($l\geq 3$):
\begin{enumerate}
    \item For the case $\gb{GL}_k \times \gb{GL}_l\subset \gb{GL}_{l+k}$, we obtain from $\operatorname{Re}(s\pm r_i\pm t_j )>-1$, for $\operatorname{Re}(s) \geq 1/2$ the condition, thanks to \cite[Proposition I.10]{MWspectre}.
    \item For the case $\gb{SO}_{2l}^*\times \gb{GL}_k\subset \gb{SO}_{2(l+k)}^*$, we note that $\operatorname{Re}(s\pm t_d)\geq 0$ for $\operatorname{Re}(s)\geq 1/2$. As $\pi_0$ is tempered, we get our condition. 
    \item Finally for the case $\gb{GL}_{l-1}\subset \gb{SO}_{2l}^*$ ($l\geq 3$), we conclude as in \cite[Lemma 3.3, Proposition 3.4]{Kim1999} to conclude.
\end{enumerate}
\end{proof}  
\subsection{Generic functoriality for $\gb{SO}_{2n}^*$} Now let us go back to the global situation and we are going to check the last points to obtain the generic functoriality.

(\emph{Polynomial condition}). To prove the polynomial condition, we need the following holomorphicity result of the $L$-functions. The result is inspired from \cite[Section 2]{KimSha} and \cite[Section 4]{L18}.
\begin{pro}\label{Lpoly}
Suppose that $\pi$ and $\tau$ are unramified outside of $T$ and that $S'$ is a subset of $T$ with the property that for $x\in S'$, the local Normalized Intertwining Operator $N(s,\tau_x\otimes\Tilde{\pi}_x,\Tilde w_0)$ is holomorphic and non-zero on $\operatorname{Re}(s)\geq 1/2$ and $\Tilde w_0(\tau\otimes \Tilde\pi)\not \cong \tau\otimes \Tilde\pi$. Then the  $L$-function \[L^{T\setminus S'}(s,\pi\times \tau)=\prod_{x\not \in T\setminus S'}L(s,\pi\times \tau)\] is holomorphic on $\operatorname{Re}(s)\geq 1/2$ and non-zero on $\operatorname{Re}(s)\geq 1$.
\end{pro}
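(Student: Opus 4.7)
The plan is to read off the analytic behaviour of $L^{T\setminus S'}(s,\pi\times\tau)$ from that of the Eisenstein series attached to the inducing datum $\sigma=\tau\otimes\tilde\pi$ on the maximal parabolic $\gb P=\gb M\gb N\subset\gb{SO}^*_{2m+2n}$ with Levi $\gb M\cong\gb{GL}_m\times\gb{SO}^*_{2n}$. First I would form the Eisenstein series $E(s,f,g)$ on $\gb{SO}^*_{2m+2n}(\mathbb A_F)$ induced from $\sigma$, and recall, following Langlands' theory in the function-field setting (as used in \cite{L15,L18}), that the analytic properties of $E(s,f,g)$ on $\operatorname{Re}(s)>0$ are governed by those of its constant term $f+M(s,\sigma)f$, where $M(s,\sigma)=\prod_x A(s,\sigma_x,\tilde w_0)$ is the global intertwining operator.

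Next I would factor each local intertwining operator as $A(s,\sigma_x,\tilde w_0)=r(s,\sigma_x)\,N(s,\sigma_x,\tilde w_0)$ following Section~\ref{subsec:normalizedOp}, and use the functional equation \eqref{eq:funeqLS} (together with its $\wedge^2\rho_m$ analogue) to rewrite
\[ M(s,\sigma)=\frac{L(s,\pi\times\tau)\,L(2s,\tau,\wedge^2\rho_m)}{L(1+s,\pi\times\tau)\,L(1+2s,\tau,\wedge^2\rho_m)}\,\prod_x N(s,\sigma_x,\tilde w_0), \]
up to a unit monomial in $q^{-s}$ coming from the global product of the $\varepsilon$-factors. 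The denominator is holomorphic and non-vanishing on $\operatorname{Re}(s)\geq 1/2$ (by Euler product convergence applied on $\operatorname{Re}(s)\geq 3/2$ for the first factor and by the results of \cite{HLsym} for the second). Meanwhile, the hypothesis $\tilde w_0(\sigma)\not\cong\sigma$ together with Langlands' theory of Eisenstein series forces $E(s,f,g)$ to be holomorphic on $\operatorname{Re}(s)\geq 1/2$, and the standard non-vanishing of Eisenstein series on $\operatorname{Re}(s)=1$ gives $E(s,f,g)\neq 0$ there for suitable $f,g$.

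To transfer these properties to $L^{T\setminus S'}(s,\pi\times\tau)$ I would take a pure tensor $f=\otimes_x f_x$ that is spherical at $x\notin T$ (so $N(s,\sigma_x,\tilde w_0)f_x=f_x$), arbitrary non-zero at $x\in S'$ (where by hypothesis $N(s,\sigma_x,\tilde w_0)$ is holomorphic and non-vanishing on $\operatorname{Re}(s)\geq 1/2$), and, at each $x\in T\setminus S'$, chosen by the Bernstein-type arguments of \cite{L15,L18} so that $A(s,\sigma_x,\tilde w_0)f_x$ is holomorphic and non-zero at the prescribed test point. Unwinding the identity above, the holomorphy of $E(s,f,g)$ forces $L(s,\pi\times\tau)\,L(2s,\tau,\wedge^2\rho_m)$ to be holomorphic on $\operatorname{Re}(s)\geq 1/2$; since $L^{T\setminus S'}(s,\pi\times\tau)$ differs from $L(s,\pi\times\tau)$ by the finite product $\prod_{x\in T\setminus S'}L(s,\pi_x\times\tau_x)^{-1}$ of polynomials in $q^{-s}$, this yields the claimed holomorphy. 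The parallel argument using Langlands' non-vanishing of $E(s,f,g)$ on $\operatorname{Re}(s)=1$, together with the fact that local $L$-factors (reciprocals of polynomials in $q^{-s}$) cannot introduce zeros when divided out, yields non-vanishing of $L^{T\setminus S'}(s,\pi\times\tau)$ on $\operatorname{Re}(s)\geq 1$.

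The main obstacle I expect is controlling the auxiliary factor $L(2s,\tau,\wedge^2\rho_m)$ near the edge $\operatorname{Re}(s)=1/2$, where a pole could a priori cancel a pole of $L(s,\pi\times\tau)$ and invalidate the extraction; handling this requires the precise analytic information on the exterior-square $L$-function from \cite{HLsym}. A related subtlety is that the local sections at $x\in T\setminus S'$ must be chosen compatibly so that $\prod_x N(s,\sigma_x,\tilde w_0)f_x$ remains holomorphic and (separately) non-vanishing throughout the respective strips, which amounts to a standard density argument inside the induced representation.
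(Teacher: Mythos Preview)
Your overall strategy---extract the analytic behaviour of the $L$-function from the global intertwining operator $M(s,\sigma,\tilde w_0)$ via its factorisation into local pieces, using the hypothesis on $N(s,\sigma_x,\tilde w_0)$ at $x\in S'$---is exactly the paper's approach. But there is a genuine gap in your treatment of the denominator.

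You claim that $L(1+s,\pi\times\tau)$ is holomorphic and non-vanishing on $\operatorname{Re}(s)\geq 1/2$ ``by Euler product convergence applied on $\operatorname{Re}(s)\geq 3/2$''. This is circular: absolute convergence of the Euler product at $\operatorname{Re}(s)=3/2$ would require bounds on the Satake parameters of $\pi_x$, i.e.\ a form of the Ramanujan conjecture for $\gb{SO}^*_{2n}$---which is precisely what the paper is working towards. No such bound is available at this point. The paper avoids this by a \emph{shift argument}: one only knows a priori that the partial $L$-functions are holomorphic for $\operatorname{Re}(s)>N$ for some large $N$ (\cite[Section~13.2]{CorB}); since the ratio
\[
\frac{L^{T\setminus S'}(s,\pi\times\tau)\,L^{T\setminus S'}(2s,\tau,\wedge^2\rho_m)}{L^{T\setminus S'}(1+s,\pi\times\tau)\,L^{T\setminus S'}(1+2s,\tau,\wedge^2\rho_m)}
\]
is holomorphic on $\operatorname{Re}(s)\geq 1/2$, holomorphy of the denominator on $\operatorname{Re}(s)\geq N-1$ forces holomorphy of the numerator there, and one iterates down to $\operatorname{Re}(s)\geq 1/2$.

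Two smaller points. First, your displayed identity for $M(s,\sigma)$ mixes conventions: if at $x\in T\setminus S'$ you keep the unnormalised $A(s,\sigma_x,\tilde w_0)$ (as you say later), then the $L$-ratio in front is the \emph{partial} one $L^{T\setminus S'}$, not the completed one, and the passage ``from $L$ to $L^{T\setminus S'}$ by dividing out polynomials'' is unnecessary. The paper writes the mixed decomposition directly (cf.\ \cite[Eq.~(3.2)]{L18}). Second, ``non-vanishing of the Eisenstein series on $\operatorname{Re}(s)=1$'' is not the right input for the non-vanishing of $L^{T\setminus S'}$; the paper instead uses the holomorphy of $E(s,\Phi,g,\gb P)$ on $\operatorname{Re}(s)\geq 0$ together with \cite[Eq.~(1.3)]{L18}, which expresses a Fourier coefficient of the Eisenstein series as a quotient with the relevant $L$-product in the denominator, to conclude that $L^{T\setminus S'}(1+s,\pi\times\tau)L^{T\setminus S'}(1+2s,\tau,\wedge^2\rho_m)$ is non-zero on $\operatorname{Re}(s)\geq 0$. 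The separation of the exterior-square factor then proceeds, as you anticipated, via a separate Siegel-Levi argument (\cite[Section~6.1]{L18}) combined with temperedness of $\tau_x$ (L.~Lafforgue) and the tempered $L$-function conjecture (\cite[Corollary~5.5]{L18}).
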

%For the definition of the local Normalized Intertwining Operator we refer to  Section \ref{subsec:normalizedOp}.
\begin{proof} In this proof we use the global intertwining operator and its relation with Eisenstein series. For the definition of the global intertwining operator we  refer to Section \ref{subsec:GlobalL}, but for their properties we rely on \cite{L18}.

Now, putting the definition of the normalized intertwining operator in the right hand side of the formula \cite[Eq. (3.2)]{L18}, we get
\begin{align*}M(s,\sigma,\Tilde w_0)f=&
\bigotimes_{x\in T\setminus
S'} A(s,\sigma_x,\Tilde w_0)f_x  \cdot \\ &\bigotimes_{x\in S'}r(s,\sigma_x)^{-1}N(s,\sigma_x,\Tilde w_0)f_x \cdot  \\ 
& \frac{L^T(s,\pi\times \tau)L^T(2s,\tau,\wedge^2\rho_m)}{L^T(1+s,\pi\times \tau)L^T(1+2s, \tau,\wedge^2\rho_m)}\bigotimes_{x\not \in T
}\Check{f}_x^\circ.
\end{align*}
Since $\Tilde w_0(\tau\otimes \Tilde\pi)\not \cong \tau\otimes \Tilde\pi$, we have that $M(s,\sigma,\Tilde w_0)$ is holomorphic on $\operatorname{Re}(s)\geq 0$ \cite[Lemma 3.3]{L18}. Using this holomorphicity result, and that   $A(s,\sigma_x,\Tilde w_0)$  and $\varepsilon$-factors are non-vanishing \cite[p. 283, Eq. (10)]{Wplan}, we have that
\[\frac{L^T(s,\pi\times \tau)L^T(2s,\tau,\wedge^2\rho_m)}{L^T(1+s,\pi\times \tau)L^T(1+2s, \tau,\wedge^2\rho_m)}\]
is holomorphic on $\operatorname{Re}(s)\geq 0$. Furthermore using that $N(s,\sigma_x,\Tilde w_0)$ is holomorphic and non-zero on $\operatorname{Re}(s)\geq 1/2$, we have that
\[\frac{L^{T\setminus S'}(s,\pi\times \tau)L^{T\setminus S'}(2s,\tau,\wedge^2\rho_m)}{L^{T\setminus S'}(1+s,\pi\times \tau)L^{T\setminus S'}(1+2s, \tau,\wedge^2\rho_m)}\]
is holomorphic on $\operatorname{Re}(s)\geq 1/2$.
From the fact that $L$-functions are holomorphic on some $\operatorname{Re} (s)>N$ \cite[Section 13.2]{CorB}, we get that 
\begin{align}
L^{T\setminus S'}(s,\pi\times \tau)L^{T\setminus S'}(2s,\tau,\wedge^2\rho_m)  \label{eq:Lpar1}    
\end{align}
is holomorphic on $\operatorname{Re}(s)\geq 1/2$. 

On the other hand, since $\Tilde w_0(\tau\otimes \Tilde\pi)\not \cong \tau\otimes \Tilde\pi$, $E(s,\Phi,g,\gb P)$ is holomorphic on $\operatorname{Re} (s) \geq 0$ \cite[Lemma 3.3]{L18}. Using this holomorphicity result and that the local $L$-functions are non-vanishing by definition and the relation \cite[Eq. (1.3)]{L18} we also get 
\begin{equation}
\begin{gathered}
\prod_{x\in |S'|}L(s,\pi_x\times \tau_x)L(1+2s, \tau_x,\wedge^2\rho_{x,m})L^{T}(1+s,\pi\times \tau)L^{T}(1+2s, \tau,\wedge^2\rho_m) \\
 =L^{T\setminus S'}(1+s,\pi\times \tau)L^{T\setminus S'}(1+2s, \tau,\wedge^2\rho_m)
  \end{gathered}
  \label{eq:Lpar2}    
 \end{equation}
is non-zero on $\operatorname{Re}(s)\geq 0$.

Now we proceed  as in \cite[Section 6.1]{L18}, to get that $L^{ T}(s,\tau,\wedge^2\rho_m)$ is holomorphic on $\operatorname{Re}(s)\geq 1/2$ and non-zero on $\operatorname{Re}(s)\geq 1$. Indeed we consider the global intertwining operator in a maximal Siegel case. Then as before, staring from \cite[Lemma 3.3]{L18}  and using \cite[Eq. (1.3) and (3.2)]{L18}, we obtain that
\[L^T(s,\tau,\wedge^2\rho_m)\]
is holomorphic on $\operatorname{Re}(s)\geq 1/2$ and non-zero on $\operatorname{Re}(s)\geq 1$. Furthermore, since every $\tau_x$ is tempered and using \cite[Corollary 5.5]{L18} (tempered $L$-function conjecture), we have 
\[L^{T\setminus S'}(s,\tau,\wedge^2\rho_m)\]
%%Indeed, for $m_r=1$ it follows directly from \ref{Lpar1} and \ref{eq:Lpar2}.
is holomorphic on $\operatorname{Re}(s)\geq 1/2$. Thus, $L^{T\setminus S'}(s,\pi\times \tau)$ is holomorphic on $\operatorname{Re}(s)\geq 1/2$ by \eqref{eq:Lpar1}. Similarly, but using \eqref{eq:Lpar2}, we get it is non-zero on $\operatorname{Re}(s)\geq 0$.\end{proof} %Finally using the functional equation we get that $L(s,\pi,r_i)$ is entire and in addition with the rational property we get the polynomial condition.

 As we mention in the previous section, we already have the standard module conjecture at our disposal. Then, in order to apply Proposition \ref{pro:KimA}  and Proposition \ref{Lpoly}, we need the following local-global results. The result is inspired from \cite[Theorem 3.2]{Kim2000}.
\begin{pro} \label{pro:Lkim}
 Let $\tau$ be a (globally generic) cuspidal automorphic representation of $\gb{GL}_{m}(\mathbb{A}_F)$ and  $\pi$  a globally generic cuspidal automorphic representation of $\gb{SO}_{2n}^*(\mathbb A_F)$ such that $\tau\otimes \Tilde\pi=\sigma \not \cong \Tilde w_0\sigma$.  Then $L(s,\pi_{x_0} \times \tau_{x_0})$ is holomorphic on $\operatorname{Re}(s)\geq 1$, for every $x\in |F|$.
\end{pro}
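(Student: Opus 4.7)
The plan, following Kim~\cite{Kim2000}, is to reduce the local statement at $x_0$ to a global holomorphicity result by twisting $\tau$ with a Hecke character that is trivial at $x_0$ but highly ramified at every other ramified place, thereby trivializing the nuisance local $L$-factors away from $x_0$. One may first assume $x_0$ lies in the finite set $T$ of places where $\pi$, $\tau$ or $\psi$ is ramified; at an unramified $x_0$ the local $L$-factor is a rational function in $q_{x_0}^{-s}$ whose poles are controlled by the Satake parameters of $\pi_{x_0}$ and $\tau_{x_0}$, and the tempered description of the standard module components already exploited in Section~\ref{subsec:locallift} yields holomorphicity on $\operatorname{Re}(s)\ge 1$ directly.

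Next, I would use class field theory for $F$ to build a continuous Hecke character $\eta\colon\mathbb A_F^\times/F^\times\to\mathbb C^\times$, unramified outside $T$, with $\eta_{x_0}=1$ and $\eta_x$ arbitrarily highly ramified at every $x\in T\setminus\{x_0\}$. Replacing $\tau$ by $\tau\cdot\eta$ preserves cuspidality, global genericity and the local representation at $x_0$; for a generic choice of $\eta$ it also preserves the hypothesis $\Tilde w_0\sigma\not\cong\sigma$. By Lemma~\ref{gamtwist}, once $\eta_x$ is sufficiently ramified at each $x\in T\setminus\{x_0\}$ the $\gamma$-factor $\gamma(s,\pi_x\times(\tau_x\cdot\eta_x),\psi_x)$ becomes a monomial in $q_x^{-s}$, whence $L(s,\pi_x\times(\tau_x\cdot\eta_x))\equiv 1$ at every such place.

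Setting $S'=T\setminus\{x_0\}$, the triviality of these local factors gives
\[
L(s,\pi\times(\tau\cdot\eta))=L(s,\pi_{x_0}\times\tau_{x_0})\cdot L^{\{x_0\}}(s,\pi\times(\tau\cdot\eta)).
\]
Proposition~\ref{Lpoly} applied to $(\pi,\tau\cdot\eta)$ asserts that $L^{\{x_0\}}(s,\pi\times(\tau\cdot\eta))=L^{T\setminus S'}(s,\pi\times(\tau\cdot\eta))$ is holomorphic on $\operatorname{Re}(s)\ge 1/2$ and non-vanishing on $\operatorname{Re}(s)\ge 1$. Combining this with the holomorphy of the completed global $L$-function $L(s,\pi\times(\tau\cdot\eta))$ on $\operatorname{Re}(s)\ge 1$---which under the hypothesis $\Tilde w_0\sigma\not\cong\sigma$ follows from the Langlands--Shahidi analysis of Eisenstein series as in \cite[Lemma~3.3]{L18}---and dividing rational functions in $q^{-s}$, we conclude that $L(s,\pi_{x_0}\times\tau_{x_0})$ is holomorphic on $\operatorname{Re}(s)\ge 1$.

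The main technical obstacle will be verifying the hypotheses of Proposition~\ref{Lpoly} at the ramified places $x\in S'$ after the highly ramified twist: one must check that $N(s,(\tau_x\cdot\eta_x)\otimes\Tilde\pi_x,\Tilde w_0)$ remains holomorphic and non-vanishing on $\operatorname{Re}(s)\ge 1/2$. This should follow from Proposition~\ref{pro:KimA} once one observes that the quasi-tempered exponents $0<r_1\le\cdots\le r_b$ appearing in the Langlands quotient description of $\tau_x\cdot\eta_x$ coincide with those of $\tau_x$, so the bound $r_b<1$ is preserved under the unitary twist by $\eta_x$; a subsidiary point is that such an $\eta$ with the prescribed local ramification and the generic nonconjugacy property can always be arranged.
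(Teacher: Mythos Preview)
Your approach contains a genuine circularity. The hypothesis of Proposition~\ref{pro:KimA} is a bound on the Langlands exponents $0<r_1\le\cdots\le r_b<1$ of the $\gb{SO}_{2n}^*$-representation $\pi_x$, \emph{not} of $\tau_x\cdot\eta_x$; since $\tau_x$ is tempered by L.~Lafforgue, the $\tau$-side is harmless, but the bound $r_b<1$ for $\pi_x$ is precisely Corollary~\ref{cor:ram-poly}, which is \emph{deduced from} Proposition~\ref{pro:Lkim}. Thus you cannot invoke Proposition~\ref{pro:KimA} at the places $x\in S'$ to feed into Proposition~\ref{Lpoly} with $S'=T\setminus\{x_0\}$, nor can you assert holomorphy of the completed $L$-function on $\operatorname{Re}(s)\ge 1$ (that too, in the paper's logical order, ultimately rests on Corollary~\ref{cor:ram-poly}). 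The same circularity undermines your ``direct'' treatment of the unramified case: holomorphy of the unramified local factor on $\operatorname{Re}(s)\ge 1$ again needs $r_b<1$.

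The paper breaks this loop by never requiring the normalized-operator hypothesis at any ramified place: it applies Proposition~\ref{Lpoly} with $S'=\emptyset$, which gives only information about $L^T$. The remaining local input at $x_0$ is extracted from the global identity for $M(s,\sigma,\Tilde w_0)$: one starts from a region $\operatorname{Re}(s)\ge N_0$ where $L(s,\pi_{x_0}\times\tau_{x_0})$ is trivially holomorphic, uses holomorphy of $M$ (since $\Tilde w_0\sigma\not\cong\sigma$) and non-vanishing of the unnormalized local operators to deduce that $N(s,\sigma_{x_0},\Tilde w_0)$ is holomorphic there, hence non-zero by Zhang~\cite{Zh}, and concludes that the ratio $L(s,\pi_{x_0}\times\tau_{x_0})/L(1+s,\pi_{x_0}\times\tau_{x_0})$ is holomorphic on $\operatorname{Re}(s)\ge N_0$. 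This pushes holomorphy of $L(s,\pi_{x_0}\times\tau_{x_0})$ from $\operatorname{Re}(s)\ge N_0+1$ down to $\operatorname{Re}(s)\ge N_0$, and one iterates to reach $\operatorname{Re}(s)\ge 1$. No twist by a Hecke character and no a priori bound on the $\pi_x$-exponents are needed.
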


\begin{proof}As before we input the definition of the normalized operator in the right hand side of the formula \cite[(3.2)]{L18} to get
\begin{align*}
M(s,\sigma,\Tilde w_0)f=&
\bigotimes_{x\in S\setminus\{x_0\}} A(s,\sigma_x,\Tilde w_0)f_x  \cdot \\ & r(s,\sigma_{x_0})^{-1}N(s,\sigma_{x_0},\Tilde w_0)f_{x_0} \cdot  \\ 
& \frac{L^S(s,\pi\times \tau)L^S(2s,\tau,\wedge^2\rho_m)}{L^S(1+s,\pi\times \tau)L^S(1+2s, \tau,\wedge^2\rho_m)}\bigotimes_{x\not \in S
}\Check{f}_x^\circ.
\end{align*}
Now let $N_0\geq 1$ be big enough so that $L(1+s,\pi_{x_0}\times \tau_{x_0})$ has no poles on $\operatorname{Re}(s)\geq N_0$, i.e. $L(s,\pi_{x_0}\times \tau_{x_0})$ is holomorphic on $\operatorname{Re}(s)\geq N_0+1$. This gives us that,  if $\operatorname{Re}(s)\geq N_0$, then
\[\frac{L(s,\pi_{x_0}\times \tau_{x_0})}{\varepsilon(s,\pi_{x_0}\times \tau_{x_0},\psi_{x_0})L(1+s,\pi_{x_0}\times \tau_{x_0})}\] is non-zero. Secondly, since $\tau$ is cuspidal automorphic, then thanks to \cite[Th\'eor\`eme VI.10]{L} $\tau_{x_0}$ is tempered. Then, using \cite[Corollary 5.5]{L18} we have that $L(s,\tau_{x_0},\wedge^2\rho_m)$ is holomorphic on $\operatorname{Re}(s)\geq 1$. Therefore
\[r(s,\tau_{x_0}\otimes\Tilde\pi_{x_0})=\frac{L(s,\pi_{x_0}\times \tau_{x_0})L(2s,\tau_{x_0},\wedge^2\rho_m)}{L(1+s,\pi_{x_0}\times \tau_{x_0})L(1+2s, \tau_{x_0},\wedge^2\rho_{m,x_0})\varepsilon(s,\pi_{x_0}\times \tau_{x_0},\psi_{x_0})\varepsilon(2s,\tau_{x_0},\wedge^2\rho_{m,x_0},\psi_{x_0})}\]
is non-zero on $\operatorname{Re}(s)\geq N_0$. Thirdly, using Corollary \ref{Lpoly} for $S'=\emptyset$, we get that
\[r(s,\pi_{x_0}\otimes\tau_{x_0})\frac{L^S(s,\pi\times \tau)L^S(2s,\tau,\wedge^2\rho_m)}{L^S(1+s,\pi\times \tau)L^S(1+2s, \tau,\wedge^2\rho_m)}\]
is non-zero on $\operatorname{Re}(s)\geq N_0$. 

Fourthly, recall that $M(s,\sigma,\Tilde w_0)$ is holomorphic on $\operatorname{Re}(s)\geq 0$, since $\Tilde w_0 \sigma\not \cong \sigma$ \cite[Lemma 3.3]{L18}. Thus,  using that  $A(s,\sigma_{x_0},\Tilde w_0)$ is non-zero and the equality at the beginning of the proof, we have that $N(s,\sigma_{x_0},\Tilde w_0)$ is holomorphic on $\operatorname{Re}(s)\geq N_0$.

Lastly, since the holomorphicity of $N(s,\sigma_{x_0},\Tilde w_0)$ implies its non-zeroness \cite[Theorem 3]{Zh}, we have that $N(s,\sigma_{x_0},\Tilde w_0)$ is also non-zero on $\operatorname{Re}(s)\geq N_0$. Hence
\[\frac{L(s,\pi_{x_0}\times \tau_{x_0})}{L(1+s,\pi_{x_0}\times \tau_{x_0})}\]
is holomorphic on $\operatorname{Re}(s)\geq N_0$, and thus $L(s,\pi_{x_0} \times \tau_{x_0})$ has no poles on $\operatorname{Re}(s)\geq N_0$. Arguing inductively, we conclude that $L(s,\pi_{x_0} \times \tau_{x_0})$ is holomorphic on $\operatorname{Re}(s)\geq 1$.
\end{proof}

Using the previous local-global result, we are able to prove the second property needed for the local Normalized Intertwining Operator. Again, it is inspired from \cite[Lemma 3.3]{Kim2000}.
\begin{cor}\label{cor:ram-poly}Let $\pi=\bigotimes_x'\pi_x$ be a globally generic cuspidal automorphic representation of $\gb{SO}_{2n}^*(\mathbb A_F)$. Then $r_{b,x}<1$ for every $x\in |F|$, where $r_{b,x}$ is as in equation \eqref{eq:SM}.
\end{cor}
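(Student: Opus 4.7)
The plan is to argue by contradiction, following Kim's strategy in \cite[Lemma 3.3]{Kim2000}. Fix $x_0 \in |F|$ and suppose, toward a contradiction, that $r_b := r_{b,x_0} \geq 1$. By the standard module conjecture (available in positive characteristic via \cite{dCHL}), $\pi_{x_0}$ is realised as the full induced representation
\[
i_{P}^{\gb{SO}_{2n}^*(F_{x_0})}\bigl(\pi_{b,x_0}|\det|^{r_b} \otimes \cdots \otimes \pi_{1,x_0}|\det|^{r_1} \otimes \pi_{0,x_0}\bigr),
\]
with each $\pi_{i,x_0}$ ($0 \leq i \leq b$) tempered generic and $0 < r_1 \leq \cdots \leq r_b$.

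The next step is a globalization: I would produce a globally generic unitary cuspidal automorphic representation $\tau = \bigotimes_x' \tau_x$ of $\gb{GL}_{n_b}(\mathbb A_F)$ such that $\tau_{x_0} \cong \pi_{b,x_0}$ and such that $\sigma = \tau \otimes \Tilde\pi$ satisfies $\Tilde w_0 \sigma \not\cong \sigma$. In positive characteristic this is standard via a Poincar\'e series construction; the inequivalence condition can be arranged by twisting $\tau$ at an auxiliary place by a sufficiently ramified character. With such $\tau$ in hand, Proposition \ref{pro:Lkim} immediately forces $L(s, \pi_{x_0} \times \tau_{x_0})$ to be holomorphic on $\operatorname{Re}(s) \geq 1$.

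To reach a contradiction, I would expand $L(s, \pi_{x_0} \times \tau_{x_0})$ via multiplicativity \eqref{eq:mult} of the $\gamma$-factors, transported to the level of $L$-factors using that $\varepsilon$-factors are entire and non-vanishing and the identification of Langlands-Shahidi and Rankin-Selberg $L$-factors for generic representations of $\gb{GL}$ (Remark \ref{RSLS}):
\[
L(s,\pi_{x_0} \times \tau_{x_0}) = L(s,\pi_{0,x_0} \times \tau_{x_0}) \prod_{i=1}^{b} L(s+r_i,\pi_{i,x_0} \times \tau_{x_0}) \, L(s-r_i,\Tilde\pi_{i,x_0} \times \tau_{x_0}).
\]
Each factor on the right is a tempered Langlands-Shahidi or Rankin-Selberg $L$-factor of generic tempered representations, so by the tempered $L$-function conjecture of \cite{dCHL} each is holomorphic whenever its shifted variable has strictly positive real part. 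At $s = r_b$ this covers every term except $L(s - r_b, \Tilde\pi_{b,x_0} \times \pi_{b,x_0})$, whose Rankin-Selberg $L$-factor has the classical self-pairing pole at $0$ for a tempered generic representation of $\gb{GL}_{n_b}$. Consequently $L(s, \pi_{x_0} \times \tau_{x_0})$ has a pole at $s = r_b \geq 1$, contradicting Proposition \ref{pro:Lkim}.

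The main obstacle I anticipate is the globalization step, namely producing a cuspidal automorphic $\tau$ with prescribed tempered local component at $x_0$ together with the condition $\Tilde w_0 \sigma \not\cong \sigma$. A subsidiary point is verifying that the pole of $L(s - r_b, \Tilde\pi_{b,x_0} \times \pi_{b,x_0})$ at $s = r_b$ is not absorbed by a zero in a neighbouring tempered factor; this follows from the holomorphy and non-vanishing of tempered Rankin-Selberg $L$-functions in the open right half plane.
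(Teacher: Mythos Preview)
Your proposal is correct and follows essentially the same route as the paper: globalize the tempered piece $\pi_{b,x_0}$ to a cuspidal automorphic $\tau$, invoke Proposition~\ref{pro:Lkim} to force $L(s,\pi_{x_0}\times\tau_{x_0})$ to be holomorphic on $\operatorname{Re}(s)\geq 1$, and then read off the contradiction from the pole of the factor $L(s-r_b,\Tilde\pi_{b,x_0}\times\pi_{b,x_0})$ at $s=r_b$. You are in fact more careful than the paper in two places---explicitly arranging $\Tilde w_0\sigma\not\cong\sigma$ (which Proposition~\ref{pro:Lkim} requires but the paper's proof leaves implicit) and checking that the pole is not cancelled by a zero of a neighbouring tempered factor---so there is nothing to fix.
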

\begin{proof}
Fix a place $x$. Let 
 \[i_P^{SO^*_{2n}}(\pi_{1,x}|\det|^{r_{1,x}}\otimes\dots\otimes\pi_{b,x}|\det|^{r_{b,x}}\otimes\pi_{0,x})\]
 be the induced representation such that $\pi_x$ is its Langlands quotient, and where $0<r_{1,x}\leq \cdots \leq r_{b,x}$, ${\bf P}$ is a parabolic subgroup of ${\bf SO}_{2n}^*$ containing $\gb B$,  $\pi_{i,x}$ is an irreducible discrete series representation of $\gb{GL}_{n_i}(F_x)$ for $1\leq i\leq b$, and $\pi_{0,x}$  is an irreducible tempered representation of $\gb{SO}_{2n_0}^*(F_x)$.

 By definition we have that $L(s-r_{b,x},\pi_{b,x}\times \Tilde{\pi}_{b,x})$ is a factor of $L(s,\pi_{x}\times \Tilde{\pi}_{b,x})$. Since $L(s-r_b,\pi_{b,x}\times \Tilde{\pi}_{b,x})$ is a Rankin-Selberg $L$-functions (Remark \ref{RSLS}), it has a pole for $s=r_{b,x}$. Now,  there is a cuspidal automorphic representation $\tau_{b}=\bigotimes_x'\tau_{b,x}$ of ${\bf GL}_{n_b}(\mathbb A_F)$, such that $\tau_{b,x}\cong \pi_{b,x}$. Using Proposition \ref{pro:Lkim}, we have that $L(s,\pi_{x}\times \Tilde{\pi}_{b,x})$ is holomorphic on $\operatorname{Re}(s)>1$, and thus we must have that $r_{b,x}<1$.
\end{proof}

Now, we are finally ready to prove the polynomial condition.  First, we can find as in \cite[Proposition 4.1]{L18} a sufficiently ramified character $\eta_{x_0}$, with $x_0\in S$ (nonempty by definition), such that $\Tilde w_0 (\pi\otimes \tau \cdot \eta )\not \cong \pi\otimes (\tau\cdot \eta)$. 
This allows us to obtain the condition needed for Proposition \ref{pro:Lkim}. Furthermore, combining Corollary \ref{cor:ram-poly} and Proposition \ref{pro:KimA}, we obtain that the local normalized intertwining operator
\[N(s, \tau_{x_0}\otimes \Tilde \pi_{x_0},\Tilde w_0)\]
is holomorphic and non-zero on $\operatorname{Re}(s)\geq 1/2$, for every inert place  $x\in |F|$. Now, using an analogous result of Proposition \ref{pro:KimA} in the split case $\gb{SO}_{2n}$ in \cite{dCHL} and applying  Proposition \ref{Lpoly} to the set $T=S'$,  we obtain
that 
\[L(s,\pi\times\tau)=\prod_{x \in |F|}L(s,\pi_x \times\tau_x)\]
is holomorphic on $\operatorname{Re}(s)\geq 1/2$. Finally using the Langlands-Shahidi functional equation \eqref{eq:funeqLS}, we get that $L(s,\pi\times\tau )$ is entire. In addition, using the rationality property of $L$-functions \cite[Theorem 1.2]{L18} we see that $L(s,\Pi\times \tau)$ is a polynomial.

(\emph{Trivial on $F^\times$}). Choosing characters $\nu_x$ for $x\in S$ sufficiently ramified as in the polynomial condition and  in the ramified case of  \eqref{ram}, we can find a character $\eta$ of $\mathbb{A}^\times_F$ trivial on $F^\times$ \cite[X, Theorem 5]{ATCFT} and which satisfies $\eta_x=\nu_x$ for $x\in S$.

\label{transfer} As we have checked all the hypothesis of the converse Theorem \ref{Con} in the previous section, we find an irreducible automorphic representation $\Pi'$ of $\gb{GL}_{2n}(\mathbb A_F)$ such that $\Pi_x'=\Pi_x$ for $x\notin S$. 
\begin{thm}\label{thm:weak}
Let $\pi = \bigotimes_x' \pi_x$ be a globally generic cuspidal automorphic representation of ${\bf SO}_{2n}^*(\mathbb{A}_F)$, unramifie outside of a finite set of places $S$. Then there exists an automorphic representation $\Pi = \bigotimes_x'\Pi_x$ of ${\bf  GL}_{2n}(\mathbb{A}_F)$  such that  the representation $\Pi_x$ is unramified for every $x\not \in S$ and that its Satake parameter $\phi_{\Pi_x}$ satisfy that $\phi_{\Pi_x}=\rho_{2n,x}^*\circ \phi_{\pi_x}$  and where its central character is given by \eqref{central}.
\end{thm}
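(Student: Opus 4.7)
The plan is to deduce Theorem \ref{thm:weak} by applying the Converse Theorem (Theorem \ref{Con}) to the candidate lift $\Pi = \bigotimes_x' \Pi_x$ constructed in Section \ref{subsec:candidate}. Three hypotheses need verification: triviality of the central character of $\Pi$ on $F^\times$; polynomial behavior of $L(s, \Pi \times \tau')$ and $L(s, \tilde\Pi \times \tilde\tau')$ for all $\tau' \in \mathcal{T}(S; \eta)$ for a suitable id\`ele class character $\eta$; and the functional equation linking these with $\varepsilon(s, \Pi \times \tau', \psi)$. The first is already in hand by \eqref{central}, and the functional equation will follow immediately from the Langlands-Shahidi functional equation \eqref{eq:funeqLS} once the global matching of Corollary \ref{ShRS} is invoked. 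The real work lies in the polynomial condition, which is where I would concentrate the argument.

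To establish it, I would first construct $\eta$ carefully. At each place $x_0 \in S$ I would choose a local character $\nu_{x_0}$ of $F_{x_0}^\times$ ramified highly enough that three things occur simultaneously: (a) the twisted ramified local identity \eqref{ram} holds; (b) $\tilde w_0(\tau_{x_0} \otimes \tilde\pi_{x_0} \cdot \nu_{x_0}) \not\cong \tau_{x_0} \otimes \tilde\pi_{x_0} \cdot \nu_{x_0}$, following the pattern of \cite[Proposition 4.1]{L18}; and (c) the local normalized intertwining operator $N(s, \tau_{x_0} \otimes \tilde\pi_{x_0} \cdot \nu_{x_0}, \tilde w_0)$ is holomorphic and non-vanishing on $\operatorname{Re}(s) \geq 1/2$. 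Condition (c) is provided at inert places by Proposition \ref{pro:KimA}, which applies thanks to the standard module conjecture and the bound $r_{b,x_0} < 1$ furnished by Corollary \ref{cor:ram-poly}; at split places the analogous statement is supplied by \cite{dCHL}. I would then globalize using \cite[X, Theorem 5]{ATCFT} to produce an id\`ele class character $\eta$ of $\mathbb{A}_F^\times$ trivial on $F^\times$ with $\eta_{x_0} = \nu_{x_0}$ for each $x_0 \in S$.

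With $\eta$ fixed, Corollary \ref{ShRS} transfers the polynomial question from $\Pi$ to $\pi$, and Proposition \ref{Lpoly} applied with $T = S' = S$ yields holomorphy of $L(s, \pi \times \tau')$ on $\operatorname{Re}(s) \geq 1/2$. The Langlands-Shahidi functional equation \eqref{eq:funeqLS} then propagates this to $\operatorname{Re}(s) \leq 1/2$, making $L(s, \pi \times \tau') = L(s, \Pi \times \tau')$ entire; the rationality of $L$-functions in positive characteristic \cite[Theorem 1.2]{L18} forces this entire function of $q^{-s}$ to be a polynomial. The same argument for the contragredients completes the verification. Applying Theorem \ref{Con} then yields an irreducible automorphic representation $\Pi'$ of $\gb{GL}_{2n}(\mathbb{A}_F)$ with $\Pi'_x \cong \Pi_x$ for every $x \notin S$; by the construction of Section \ref{subsec:description}, at such places $\Pi_x$ is unramified with Satake parameter $\rho_{2n,x}^* \circ \phi_{\pi_x}$. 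The principal obstacle throughout is the simultaneous coordination required of the ramified twist $\eta$: a single character must force holomorphy of the intertwining operator on $\operatorname{Re}(s) \geq 1/2$, break the self-duality $\tilde w_0(\tau \otimes \tilde\pi) \not\cong \tau \otimes \tilde\pi$, satisfy the local ramified identity \eqref{ram}, and descend from $\mathbb{A}_F^\times$ to $\mathbb{A}_F^\times/F^\times$, all at once.
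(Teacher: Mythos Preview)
Your proposal is correct and follows essentially the same route as the paper: build the candidate lift, manufacture an id\`ele class character $\eta$ via Grunwald--Wang that simultaneously forces the ramified local identities \eqref{ram}, breaks the symmetry $\tilde w_0(\tau\otimes\tilde\pi)\not\cong\tau\otimes\tilde\pi$, and (through Corollary~\ref{cor:ram-poly} feeding into Proposition~\ref{pro:KimA}, plus the split analogue from \cite{dCHL}) puts the normalized intertwining operators in the regime required by Proposition~\ref{Lpoly}; then transfer via Corollary~\ref{ShRS}, use the functional equation \eqref{eq:funeqLS} and rationality to get polynomials, and apply Theorem~\ref{Con}. One small slip: in invoking Proposition~\ref{Lpoly} you set $T=S'=S$, but a given $\tau'\in\mathcal{T}(S;\eta)$ may well be ramified at places outside $S$, so $T$ must be the full ramification set of $\pi$ and $\tau'$; since Corollary~\ref{cor:ram-poly} gives $r_{b,x}<1$ at \emph{every} place, Proposition~\ref{pro:KimA} applies throughout and one simply takes $S'=T$ (as the paper does), leaving the argument intact.
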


We will study further properties of these lifts, in the next section.

\section{Automorphic L-functions and image of the functorialiy}

 Inspired by \cite{L18} and \cite{Sdry}, we prove that the cuspidal factors of the isobaric sum are distinct, unitary and self dual, in positive characteristic.  %It has the feature that the $L$-functions input is  self contained within the Langlands-Shahidi's method.
  We check that this lift respects the arithmetic information coming from  $\gamma$-factors. We finish by proving, as an application of the functoriality, the unramified Ramanujan conjecture for globally generic cuspidal automorphic representations of $\gb{SO}_{2n}^*(\mathbb A_F)$.

  To obtain the result about the isobaric sum, we need the following fact about the unramified unitary dual of the split special orthogonal group ${\bf SO}_{2n}$ \cite{dCHL}.

\begin{pro}\label{A:non-uni}
Let $\tau$ be a tempered smooth irreducible unramified representation of $\gb{GL}_m(F)$ and $\pi$ a unitary smooth irreducible unramified representation of $\gb{SO}_{2n}(F)$. Let s be a complex parameter with $\operatorname{Re}(s)>1$. Then the unramified component of $i_{{\bf P}(F)}^{{\bf SO}_{2(n+m)}(F)}(|\det|^s\tau \otimes \pi)$ is not unitary.
\end{pro}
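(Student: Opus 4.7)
The plan is to argue by contradiction, reducing to a principal series computation on $\gb{SO}_{2(n+m)}(F)$ and then invoking the description of its unramified unitary dual in \cite{dCHL}. Suppose that the unramified component $\Sigma$ of $i_{{\bf P}(F)}^{{\bf SO}_{2(n+m)}(F)}(|\det|^s \tau \otimes \pi)$ is unitary for some $s$ with $\operatorname{Re}(s)>1$, and we will derive a contradiction.

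First, I would realise $\Sigma$ as the unramified subquotient of a single principal series. Since $\tau$ is tempered and unramified on $\gb{GL}_m(F)$, it is fully induced from a tuple $(\chi_1,\dots,\chi_m)$ of unitary unramified characters of $F^\times$. Since $\pi$ is unitary and unramified on $\gb{SO}_{2n}(F)$, the classification in \cite{dCHL} realises $\pi$ as the unramified subquotient of $i_{B(F)}^{{\bf SO}_{2n}(F)}(\lambda)$ for an unramified character $\lambda$ of the split torus. Transitivity of parabolic induction then gives that $\Sigma$ is the unramified subquotient of
\[i_{B(F)}^{{\bf SO}_{2(n+m)}(F)}\bigl(|\cdot|^s\chi_1 \otimes \cdots \otimes |\cdot|^s\chi_m \otimes \lambda\bigr).\]

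Next, I would read off the Satake class of $\Sigma$: each character $|\cdot|^s\chi_i$ contributes a Satake parameter of absolute value $q_F^{-\operatorname{Re}(s)}$, and by the symmetry of the dual group $\gb{SO}_{2(n+m)}(\mathbb{C})$ the class also contains matching parameters of absolute value $q_F^{\operatorname{Re}(s)}$. For $\operatorname{Re}(s)>1$ these parameters lie outside the region allowed for the unramified unitary dual as described in \cite{dCHL}: the complementary series issued from the maximal parabolic with Levi $\gb{GL}_m\times\gb{SO}_{2n}$ is shown there to terminate at $\operatorname{Re}(s)=1$. This forces $\operatorname{Re}(s)\le 1$, contradicting our assumption.

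The main obstacle, and the reason for invoking \cite{dCHL}, is the termination of the complementary series at $\operatorname{Re}(s)=1$ in positive characteristic. In characteristic zero this bound goes back to Kim's rank-one analysis \cite{Kim2000} of the reducibility point of the corresponding standard module, and \cite{dCHL} extends the relevant unitarity classification to our setting. Once this input is available, the remainder of the argument is the transitivity and Satake-parameter bookkeeping sketched above, and the reduction from unitary $\pi$ to a principal series is what allows the input from \cite{dCHL} to apply directly.
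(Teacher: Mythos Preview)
The paper does not actually prove this proposition: it is stated as a fact imported from \cite{dCHL} (``On three conjectures'', work in progress), with no argument given in the present article. Your proposal is therefore not comparable to a proof in the paper, since there is none; rather, your sketch is an explanation of \emph{why} the result in \cite{dCHL} applies, reducing via transitivity of induction and Satake parameters to the classification of the unramified unitary dual established there. In that sense your approach is entirely consistent with the paper's treatment --- both place the substantive content in \cite{dCHL} --- and your outline of the reduction (principal series realisation, Satake bookkeeping, complementary-series bound at $\operatorname{Re}(s)=1$) is correct. Just be aware that the ``termination at $\operatorname{Re}(s)=1$'' you invoke is precisely the assertion being cited, so your argument is a repackaging of the citation rather than an independent proof.
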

\subsection{Global L-functions} \label{subsec:GlobalL}With the unramified unitary dual result at hand, we are ready to continue with the study of the image of functoriality.

Let $\gb P=\gb M\gb N$ be a maximal parabolic subgroup of $\gb G$ containing $\gb B$ associated to $\theta=\Delta- \{\alpha\}\subset \Delta$  and let $w_0=w_{l,\gb G}w_{l,\gb M}\in W^{\gb G}$, where $w_{l,{\bf G}}$ and $w_{l,{\bf M}}$ are the longest element of the Weyl group of ${\bf G}$ and of ${\bf M}$, respectively. We fix a maximal compact open subgroup $K=\prod_x K_x$ of $G=\gb G(\mathbb A_F)$, as in \cite[Section I.1.4]{CW}.

Let $\sigma=\bigotimes_x'\sigma_x$ be a unitary cuspidal automorphic representation of $\gb M(\mathbb A_F)$, where the restricted product is taken with respect to functions \{$\varphi^\circ_{x}$\}. We write
\[i_P^G(s,\sigma)=\bigotimes_x{}'  i_{{\bf P}(F_x)}^{{\bf G}(F_x)}(s,\sigma_x)=\bigotimes_x{}' i_{{\bf P}(F_x)}^{{\bf G}(F_x)} (\sigma_x \otimes q^{\langle s\Tilde \alpha,H_{\gb P_x}(\cdot)\rangle}),\]
where the restricted product is taken with respect to the functions $f^\circ_{x,s}\in i_P^G(s,\sigma_x)$ such that $f^\circ_{x,s}(k_x)=\varphi_x^\circ$ for all $k_x\in K_x$.
For $\Tilde w_0$ a representative of $w_0$, we define the global intertwining operator for $\operatorname{Re}(s)$ big enough, as in \cite[Section 1.2]{L18}, % Let $\gb P=\gb M\gb N,\gb P'=\gb M'\gb N'$ be standard parabolic subgroups of $\gb G$ and $w\in \gb G(F)$ such that 
%\[w\gb Mw^{-1}=\gb M'.\] 
by
\begin{align*}
M(s ,\sigma,\Tilde w_0)\colon i_P^G(s,\sigma)&\to i_{P'}^G(\Tilde w_0(s),\Tilde w_0(\sigma))\\
f&\mapsto\left(g\mapsto\int_{\gb N'(\mathbb A_F)} f(\Tilde w^{-1}_0ng)dn\right),
\end{align*}
 where $\gb N'$ is the radical of $\gb P'=\gb P_{w_0(\theta)}$.%, and the Eisenstein series, for an automorphic form $\Phi\colon \gb M(F)\gb N(\mathbb A_F)\backslash \gb G(\mathbb A_F)\to \mathbb C$%, as in \cite[IV]{CW},  
%\[E(s,\Phi,g,\gb P)=\sum_{\gamma\in \gb P(k)\backslash \gb G(k)}\Phi_s(\gamma g),\]
%where $\Phi_s=\Phi\cdot q^{\langle s\Tilde \alpha +\rho_{\gb P},H_{\gb P}(\cdot)\rangle}$. These two are moreover rational in the variable $q^{-s}$ \cite[Proposition IV.1.12]{CW}.

 %We recall the following connection between the intertwining operator and the partial $L$-functions. Let $f_s=f_{S,s} \otimes f^S_s\in i_P^G(s,\pi)$, where $f^S_s=\otimes_{x\not \in S}f_{x,s}^\circ$. Then \cite[Eq. (3.2)]{L18}
%\begin{equation} M(s,\sigma,\Tilde w_0)f_s= \otimes_{x\in S}A(s,\sigma_x,\Tilde w_0)f_{x,s}  \cdot \prod_{i=1}^{m_r}\frac{L^S(is,\sigma,r_i)}{L^S(1+is,\sigma,r_i)}\otimes_{x\not \in S
%}\Check{f}_{x,s}^\circ, \label{intLS}\end{equation}
%where $\Check{f}_{x,s}^\circ(k_x)=\varphi_x^\circ$ for all $k_x\in K_x$ and
%\[L^S(s,\sigma,r_i)=\prod_{x\not \in S}L(s,\sigma_x,r_{i,x}) , \]
%with $L(s,\pi_x,r_{i,x})=\det(\operatorname{Id}-q_{F_x}^{-s}r_{i,x}(a_x))^{-1}$, where $[a_x]$ the semisimple conjugacy class in ${}^LG_x$ associated to $\pi_x$ and $r_{i,x}$,  the restriction via $\Gamma_{F_x}\hookrightarrow \Gamma_{F}$ of $r_i$.

\begin{pro}\label{Mholo2}Suppose that $\gb G=\gb{SO}^*_{2(m+n)}$, let $\gb P=\gb M\gb N$ be a parabolic subgroup containing $\gb B$ with Levi subgroup $\gb M$ isomorphic to   $\gb{GL}_m \times \gb{SO}^*_{2n}$ and $\Tilde w_0\in \gb G(F)$  a representative of $ w_0\in W^{\gb G}$. Let $\sigma=  \tau \otimes  \Tilde\pi $ be a unitary globally generic cuspidal automorphic representation of $\gb M(\mathbb A_F)$. Then $M(s,\sigma,\Tilde w_0)$ is holomorphic on $\operatorname{Re}(s)>1$.
\end{pro}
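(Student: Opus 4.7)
The plan is to adapt the factorization of the global intertwining operator used in \cite{L18} for the split case, combining it with Proposition \ref{pro:KimA} and the analytic properties of Langlands--Shahidi $L$-functions developed in the preceding sections. Fix a finite set $S$ of places containing all ramified places of $\pi$, $\tau$ and $\psi$, and a decomposable section $f = \bigotimes_x f_x$ spherical outside $S$. Then by \cite[(3.2)]{L18},
\[ M(s,\sigma,\Tilde w_0)f = \prod_{x\in S} r(s,\sigma_x)\, N(s,\sigma_x,\Tilde w_0)f_x \cdot \frac{L^S(s,\pi\times\tau)\, L^S(2s,\tau,\wedge^2\rho_m)}{L^S(1+s,\pi\times\tau)\, L^S(1+2s,\tau,\wedge^2\rho_m)} \bigotimes_{x\notin S}\Check f_x^\circ. \]

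Next, I apply Proposition \ref{pro:KimA} to each local normalized intertwining operator $N(s,\sigma_x,\Tilde w_0)$ at the ramified places $x \in S$. Its hypotheses are satisfied because $\pi_x$ is generic as a local component of a globally generic cuspidal representation; the standard module conjecture in positive characteristic \cite{dCHL} realizes $\pi_x$ as a full induced representation from tempered data; Corollary \ref{cor:ram-poly} provides the exponent bound $r_{b,x} < 1$; and $\tau_x$ is generic unitary as a local component of the unitary cuspidal $\tau$. Hence each $N(s,\sigma_x,\Tilde w_0)$ is holomorphic on $\operatorname{Re}(s) \geq 1/2$, and in particular on $\operatorname{Re}(s) > 1$.

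The next step is the analysis of the $L$-factors on $\operatorname{Re}(s) > 1$. The denominator factors $L^S(1+s,\pi\times\tau)$ and $L^S(1+2s,\tau,\wedge^2\rho_m)$ lie in the region of absolute convergence of their Euler products (since $\operatorname{Re}(1+s) > 2$ and $\operatorname{Re}(1+2s) > 3$) and are therefore nonvanishing. In the numerator, $L^S(2s,\tau,\wedge^2\rho_m)$ is likewise absolutely convergent and holomorphic. The remaining factor $L^S(s,\pi\times\tau)$ is holomorphic on $\operatorname{Re}(s) > 1$: in the non-self-dual case $\Tilde w_0\sigma \not\cong \sigma$, Proposition \ref{Lpoly} applied with $S' = \emptyset$ yields holomorphicity already on $\operatorname{Re}(s) \geq 1/2$; in the self-dual case, the Eisenstein-series interpretation of the Langlands--Shahidi $L$-function localizes its possible poles on the closed right half-plane only at $s = 1$, which is excluded from the open half-plane $\operatorname{Re}(s) > 1$. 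The local $r(s,\sigma_x)$-factors at $x \in S$ are controlled analogously, using the tempered $L$-function conjecture \cite[Corollary 5.5]{L18} together with Corollary \ref{cor:ram-poly} and the nonvanishing of $\varepsilon$-factors.

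Putting these pieces together, $M(s,\sigma,\Tilde w_0)$ is expressed as a product of factors each holomorphic and nonvanishing on $\operatorname{Re}(s) > 1$, yielding the claim. The main obstacle is the analysis of $L^S(s,\pi\times\tau)$ in the self-dual situation $\Tilde w_0\sigma \cong \sigma$, where Proposition \ref{Lpoly} does not directly apply and one must instead appeal to the Eisenstein-series origin of the completed $L$-function to localize its poles at the single point $s=1$; this is precisely what makes $\operatorname{Re}(s) > 1$ the natural domain of holomorphicity asserted by the proposition.
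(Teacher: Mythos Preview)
Your approach via the factorization \cite[(3.2)]{L18} works in the non-self-dual case $\Tilde w_0\sigma\not\cong\sigma$; indeed there \cite[Lemma 3.3]{L18} already gives holomorphicity of $M(s,\sigma,\Tilde w_0)$ on $\operatorname{Re}(s)\geq 0$ without further effort. The real content of the proposition is the self-dual case $\Tilde w_0\sigma\cong\sigma$, and here your argument is circular. You invoke ``the Eisenstein-series interpretation of the Langlands--Shahidi $L$-function'' to localize the poles of $L^S(s,\pi\times\tau)$ at $s=1$, but that statement is exactly Theorem \ref{GLS1}, whose proof in the paper begins by citing Proposition \ref{Mholo2}. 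The known Eisenstein-series arguments that pin down the poles of $L^S(s,\pi\times\tau)$ all pass through the holomorphicity of the global intertwining operator (via \cite[Eq.~(1.3) and (3.2)]{L18}), so you cannot use them here as input.

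The paper breaks this circle by an entirely different mechanism. Rather than bounding each factor in the product formula, it observes that a pole $s_0$ of $M(s,\sigma,\Tilde w_0)$ forces some subquotient of $i_P^G(s_0,\sigma)$ into the residual discrete spectrum, hence to be unitary at every place. One then chooses a place $x_0\notin S$ \emph{split} in $E$ (positive density by Chebotarev), where the group becomes the split $\gb{SO}_{2(m+n)}$, and appeals to Proposition \ref{A:non-uni}: for $\operatorname{Re}(s_0)>1$ and $\tau_{x_0}$ tempered (by L.~Lafforgue's theorem for $\gb{GL}_m$), the unramified constituent of $i_{P}^{G}(s_0,\sigma_{x_0})$ is not unitary, a contradiction. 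This unitarity-of-residual-spectrum argument, together with the input from the unramified unitary dual of the split group, is the idea missing from your proposal.
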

\begin{proof} 
Let $S$ be a finite subset  of $|F|$, such that $\sigma_x$ is unramified for $x\not \in S$.  

Thanks to the work of L. Lafforgue \cite[Th\'eor\`eme VI.10]{L}, we know that each local component of the globally generic cuspidal automorphic representation $\tau=\bigotimes_x'\tau_x$ of $\gb{GL}_m(\mathbb A_F)$ %arises from an induced representation  of the form \[i_{Q}^{GL_{m}}(\tau_{1,x}\otimes \cdots\otimes \tau_{f,x}),\]
%with  $Q$ a parabolic subgroup containing upper triangular matrices and each $\tau_{i,x}$
is tempered. Then, for each $x\not \in S$ we have that $\tau_x$ is of the form
\[i_{B_m}^{GL_m}(\chi_{1,x}\otimes \cdots\otimes \chi_{m,x}),\]
where $\chi_{j,x}$ is unitary unramified character of $F^\times_x$.

 Now, if $s_0$ is a pole of $M(s,\sigma,\Tilde w_0)$, then some subquotient of $i_{P}^{G} (s_0,\sigma)$  would be in the discrete residual spectrum   \cite[Section 1]{Kim2000}, thus unitary. Then for such $s_0$, we would have that for almost every $x\in |F|$,  the unramified component of  $i_{\gb P(F_x)}^{\gb G(F_{x})}(s_0,\sigma_x)$ is unitary. 
 
 We argue by contradiction, and thus we assume that $\operatorname{Re}(s_0)>1$. First, we can enlarge $S$, so that $i_{\gb P(F_x)}^{\gb G(F_{x})}(s_0,\sigma_x)$  has a unitary unramified component. As the set $\{x\in |F|\setminus S \colon x \text{ is split  in } E\}$ has density $1/2$, by Chebotarev's theorem, we can always some find $x_0\not \in S$ split  in $E$.  But, thanks to Proposition \ref{A:non-uni}, we get that the unramified component of $i_{\gb P(F_x)}^{\gb G(F_{x})}(s_0,\sigma_x)$ is not unitary, thus a contradiction.
\end{proof}

\begin{thm} \label{GLS1}Suppose that $\gb G=\gb{SO}^*_{2(m+n)}$, and $\gb P=\gb M\gb N$ parabolic subgroup with Levi subgroup $\gb M$ isomorphic to   $\gb{GL}_m \times \gb{SO}^*_{2n}$. Let $\sigma=\tau \otimes \Tilde\pi$ be a globally generic cuspidal automorphic representation of $\gb{M}(\mathbb A_F)$. Then $L^S(s,\pi \times \tau)$ is holomorphic and non-vanishing on $\operatorname{Re} (s)>1$ and has at most a simple pole at $s=1$.
\end{thm}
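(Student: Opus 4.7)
The plan is to exploit the factorization of the global intertwining operator from \cite[Eq.~(3.2)]{L18}. For $\sigma = \tau \otimes \Tilde\pi$ and a suitable factorizable test vector $f = \otimes_x f_x$, one has
\[
M(s,\sigma,\Tilde w_0)f = \bigotimes_{x\in S} A(s,\sigma_x,\Tilde w_0)f_x \cdot \frac{L^S(s,\pi\times\tau)\,L^S(2s,\tau,\wedge^2\rho_m)}{L^S(1+s,\pi\times\tau)\,L^S(1+2s,\tau,\wedge^2\rho_m)} \bigotimes_{x\notin S}\Check{f}_x^\circ.
\]
By Proposition \ref{Mholo2} the left-hand side is holomorphic on $\operatorname{Re}(s)>1$. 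The local intertwining operators $A(s,\sigma_x,\Tilde w_0)$ are generically non-vanishing by \cite[p.~283, Eq.~(10)]{Wplan}, so by choosing the components $f_x$ at $x\in S$ appropriately one extracts that the displayed ratio of partial $L$-functions is holomorphic on $\operatorname{Re}(s)>1$.

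Next I would handle the denominator and the $\wedge^2$-factor in the numerator by Euler-product absolute convergence. For $\operatorname{Re}(s)>1$, the factors $L^S(1+s,\pi\times\tau)$, $L^S(1+2s,\tau,\wedge^2\rho_m)$ and $L^S(2s,\tau,\wedge^2\rho_m)$ are defined at arguments with real part $>2$, $>3$ and $>2$ respectively, hence holomorphic and non-vanishing. Combined with the first step this forces $L^S(s,\pi\times\tau)$ to be holomorphic on $\operatorname{Re}(s)>1$. For the non-vanishing on the same region, I would apply the analogous argument after composing $M(\Tilde w_0(s),\Tilde w_0(\sigma),\Tilde w_0^{-1})\circ M(s,\sigma,\Tilde w_0)$ and invoke the non-vanishing of the local normalized intertwining operators provided by \cite[Theorem~3]{Zh}; the same absolute-convergence facts then transfer non-vanishing of the product on the left to $L^S(s,\pi\times\tau)$ on $\operatorname{Re}(s)>1$.

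For the at-most-simple-pole statement at $s=1$, I would extend $M(s,\sigma,\Tilde w_0)$ meromorphically across the line $\operatorname{Re}(s)=1$ and use the general pole analysis of Eisenstein series from maximal parabolics: by \cite[Section~1]{Kim2000} and its positive-characteristic counterpart in \cite{L18}, the only possible pole of $M(s,\sigma,\Tilde w_0)$ with $\operatorname{Re}(s)\geq 1$ lies at $s=1$ and is simple. Evaluating the denominator and the $\wedge^2$-factor of the numerator at $s=1$ gives non-zero finite values by absolute convergence, so the ratio of partial $L$-functions inherits at most a simple pole at $s=1$ from $M$, and this transfers to $L^S(s,\pi\times\tau)$. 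The main obstacle is precisely this boundary behaviour at $s=1$: Proposition \ref{Mholo2} only covers the open region $\operatorname{Re}(s)>1$, so controlling the order of the pole at the boundary requires the full Eisenstein-series framework in positive characteristic developed by Lomelí.
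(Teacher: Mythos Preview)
Your opening step---extracting holomorphy of the $L$-function ratio from Proposition \ref{Mholo2} via \cite[Eq.~(3.2)]{L18} and the non-vanishing of the local operators---matches the paper exactly. The divergence is in how you dispose of the denominator $L^S(1+s,\pi\times\tau)$ and how you obtain non-vanishing.

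You claim that for $\operatorname{Re}(s)>1$ the factor $L^S(1+s,\pi\times\tau)$ lies in a region of absolute convergence. This is \emph{not} a free Euler-product fact: since $\tau_x$ is tempered (Lafforgue) its Satake parameters have absolute value $1$, but the parameters of $\pi_x$ are only controlled once you know $r_{b,x}<1$, i.e.\ once you invoke Corollary~\ref{cor:ram-poly}. Without that bound the half-plane of absolute convergence for $L^S(s,\pi\times\tau)$ could a priori be $\operatorname{Re}(s)>N$ for some $N$ depending on $n$, and your ``$>2$'' is unjustified. The paper avoids this entirely: it handles the $\wedge^2$ factors via \cite[Corollary 6.4]{L18} (rather than absolute convergence), and for $L^S(1+s,\pi\times\tau)$ it uses the Whittaker coefficient formula for the Eisenstein series, \cite[Eq.~(1.3)]{L18}. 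Since $E$ is holomorphic on $\operatorname{Re}(s)>1$ (its constant term is $f+M(s,\sigma,\Tilde w_0)f$) and $\gb U(F)\backslash\gb U(\mathbb A_F)$ is compact, that formula directly yields that $L^S(1+s,\pi\times\tau)^{-1}$ is holomorphic and non-vanishing on $\operatorname{Re}(s)>1$; combined with the ratio being holomorphic this gives holomorphy of $L^S(s,\pi\times\tau)$ there. This Whittaker-coefficient route is the standard Shahidi mechanism and sidesteps any a priori Ramanujan-type bound.

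Your proposed non-vanishing argument via composing $M\circ M$ and invoking \cite{Zh} is also not what the paper does, and as written it is too vague: Zhang's result concerns \emph{local normalized} operators, while you need a global statement, and the functional equation for $M$ produces a scalar that involves the very $L$-values whose non-vanishing you are trying to prove. The paper again gets non-vanishing from the Eisenstein/Whittaker formula \cite[Eq.~(1.3)]{L18}. For the simple pole at $s=1$, the paper appeals to \cite[Proposition IV.1.11(c)]{CW} (simplicity of the poles of the global intertwining operator) together with the finiteness and non-vanishing of $L^S(2,\tau,\wedge^2\rho_m)$ and $L^S(3,\tau,\wedge^2\rho_m)$ from \cite[Corollary 6.4]{L18}; your reference to \cite[Section 1]{Kim2000} points to the same underlying spectral fact.
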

\begin{proof}
Let $S$ be a finite subset  of $|F|$, such that $\sigma_x$ is unramified for $x\not \in S$, as in the proof of Proposition \ref{Mholo2}. From \cite[Eq. (3.2)]{L18} and Proposition \ref{Mholo2}, we have that 
\[\frac{L^S(s,\pi\times \tau)L^S(2s,\tau,\wedge^2\rho_m)}{L^S(1+s,\pi\times \tau)L^S(1+2s, \tau,\wedge^2\rho_m)}\]
is holomorphic on $\operatorname{Re} (s)>1$. As $L^S(s,\tau,\wedge^2\rho_m)$ is holomorphic and non-zero on $\operatorname{Re}(s)>1$ \cite[Corollary 6.4]{L18}, we can conclude that 
\[\frac{L^S(s,\pi\times \tau)}{L^S(1+s,\pi\times \tau)}\]
is holomorphic on $\operatorname{Re} (s)>1$.

On the other hand, as  \cite[Proposition II.1.7]{CW}
\[E_{\gb P}(s,f,g,\gb P)=\int_{\gb U(F)\backslash \gb U(\mathbb A_F)}E(s,f,ug,\gb P)du=f(g)+M(s,\sigma,\Tilde w_0)f(g)\] and  Proposition \ref{Mholo2}, we have that the poles  of the constant terms $E_{\gb P}(s,f,g,\gb P)$ are contained in $\operatorname{Re}(s)\leq 1$.  Since $\gb U(F)\backslash \gb U(\mathbb A_F)$ is compact, the formula \cite[Eq. (1.3)]{L18} and that $L^S(s,\tau,\wedge^2\rho_m)$ is holomorphic and non-zero on $\operatorname{Re}(s)>1$, we conclude that $L^S(1+s,\pi\times \tau)^{-1}$
is holomorphic and non-vanishing on $\operatorname{Re}(s)>1$. Thus $L^S(s,\pi\times \tau)$ is holomorphic on $\operatorname{Re} (s)>1$. 
 
 Finally, we have that the poles of the global intertwining operator are all simple \cite[Proposition IV.1.11, (c)]{CW}. Then,  again by \cite[Eq. (3.2)]{L18} and the non-zeroness of the local intertwining operators $A(s,\tau_x\otimes \Tilde\pi_x ,\Tilde{w}_0)$, the quotient
\[\frac{L^S(s,\pi\times \tau)L^S(2s,\tau,\wedge^2\rho_m)}{L^S(1+s,\pi\times \tau)L^S(1+2s, \tau,\wedge^2\rho_m)}\]
 has at most a simple pole at $s=1$. From \cite[Corollary 6.4]{L18}, $L^S(2,\tau,\wedge^2\rho_m)$ and $L^S(3,\Tilde{\tau},\wedge^2\rho_m)$ are non-zero. Thus, $L^S(s,\pi\times \tau)$ has at most a simple pole at $s=1$.
\end{proof}

\subsection{Image and isobaric sums}
We recall that from \cite[Proposition 2]{RLiso}, every automorphic representation $\Pi$ of $\gb{GL}_{2n}(\mathbb A_F)$ arises as a subquotient of a representation induced from cuspidal automorphic  representations,
\begin{equation}
     i^{\gb{GL}_{2n}(\mathbb A_F)}_{\gb P(\mathbb A_F)}(\Pi_1\otimes\cdots\otimes\Pi_d) \label{indglob},\end{equation}
 where $\gb P$ is a parabolic subgroup of $\gb{GL}_{2n}$ containing the Borel subgroup of $\gb{GL}_{2n}$ consisting of upper triangular matrices, and with every $\Pi_i$ a cuspidal  automorphic representation of $\gb{GL}_{n_i}(\mathbb A_F)$.  We also recall that Langlands isobaric sum  gives us a subquotient of \eqref{indglob}, that we denote \cite[Section 2]{RLbaric}
 \[\Pi_1\boxplus \cdots \boxplus \Pi_d.\]
 
 We are going to study the representation $\Pi_j$ for every $1\leq j\leq d$ obtained from the representations in the image of functiorality.
\begin{thm}\label{thm:image} Let $\pi$ be a globally  generic cuspidal automorphic  representation of $\gb{SO}_{2n}^*(\mathbb A_F)$. Then, $\pi$ transfers to a globally generic irreducible automorphic representation $\Pi$ of $\gb{GL}_{2n}(\mathbb A_F)$. Its central character is given by  \eqref{central} and $\Pi$ can be expressed as an isobaric sum
\[\Pi=\Pi_1 \boxplus \cdots  \boxplus\Pi_d,\]
where each $\Pi_i$ is a unitary self-dual cuspidal automorphic representation of $\gb{GL}_{N_i}(\mathbb A_F)$, and $\Pi_i\not\cong \Pi_j$ for $i\neq j$.
\end{thm}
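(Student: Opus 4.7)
The plan is to upgrade the weak transfer $\Pi$ of Theorem \ref{thm:weak} into the structured isobaric sum asserted here, using Theorem \ref{GLS1} as the main analytic input. The central character is already pinned down in \eqref{central}, and genericity of $\Pi$ will follow automatically once the decomposition is known, since each cuspidal $\Pi_i$ on a general linear group is globally generic and isobaric sums of distinct cuspidals inherit this property.

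First I invoke Langlands' classification \cite{RLiso} to realise $\Pi$, up to replacing it by its isobaric subquotient (which shares Satake parameters almost everywhere with $\Pi$ and is therefore still a transfer), as an isobaric sum $\Pi = \Pi_1 \boxplus \cdots \boxplus \Pi_d$ of cuspidals $\Pi_i$ on $\gb{GL}_{N_i}(\mathbb A_F)$ with $\sum N_i = 2n$. Lafforgue's Ramanujan theorem \cite[Th\'eor\`eme VI.10]{L} gives $\Pi_i = \Pi_i^0 \otimes |\det|^{t_i}$ with $\Pi_i^0$ unitary cuspidal and $t_i\in\mathbb R$. Applying Theorem \ref{GLS1} to $\tau = \Pi_i^{0,\vee}$ and using the factorisation
\[
L^S(s,\pi\times\Pi_i^{0,\vee}) \;=\; L^S(s,\Pi\times\Pi_i^{0,\vee}) \;=\; \prod_j L^S(s+t_j,\Pi_j^0\times\Pi_i^{0,\vee}),
\]
the requirement that the right-hand side be holomorphic on $\operatorname{Re}(s)>1$ with at most a simple pole at $s=1$ yields, by taking $i$ with $t_i$ extremal and combining with the symmetric argument applied to $\tau=\Pi_i^0$, both that $t_i=0$ (unitarity) and that each isomorphism class of $\Pi_i^0$ appears at most once, namely pairwise distinctness.

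The remaining and most delicate step is self-duality of each $\Pi_i$. Since the Satake parameters of $\Pi$ lie in $\gb{SO}_{2n}(\mathbb C)\subset\gb{GL}_{2n}(\mathbb C)$ (invariant under $g\mapsto g^{-1}$ because $\gb{SO}_{2n}$ preserves a non-degenerate symmetric form), strong multiplicity one for isobaric sums (Jacquet-Shalika) gives $\Pi\cong\Pi^\vee$, and the collection $\{\Pi_i\}$ is stable under contragredient; there is then an involution $\sigma$ on $\{1,\ldots,d\}$ with $\Pi_{\sigma(i)}=\Pi_i^\vee$, and the goal is $\sigma=\operatorname{id}$. Following the strategy of \cite{Cogqsplit} and \cite{L18}, I would invoke the Langlands-Shahidi exterior-square $L$-function $L^S(s,\Pi,\wedge^2\rho_{2n})$ defined via the $r_2$-factor for the Levi $\gb{GL}_{2n}$ inside a larger $\gb{SO}^*_{2n+2m}$: the orthogonal nature of $\Pi$ forces $L^S(s,\Pi,\operatorname{Sym}^2\rho_{2n})$ to carry a simple pole at $s=1$ while $L^S(s,\Pi,\wedge^2\rho_{2n})$ is holomorphic and non-vanishing there. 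Expanding
\[
L^S(s,\Pi,\wedge^2\rho_{2n}) \;=\; \prod_i L^S(s,\Pi_i,\wedge^2\rho_{N_i})\prod_{i<j} L^S(s,\Pi_i\times\Pi_j)
\]
and comparing pole orders at $s=1$ rules out any two-cycle of $\sigma$, since such a cycle would force $L^S(s,\Pi_i\times\Pi_j)$ with $\Pi_i\cong\Pi_j^\vee$ to contribute a spurious pole, so $\sigma=\operatorname{id}$ and each $\Pi_i$ is self-dual. The hard part is pinning down the precise pole behaviour of these $\operatorname{Sym}^2$ and $\wedge^2$ $L$-functions in positive characteristic, for which I would rely on \cite{HLsym} and the Eisenstein-series arguments of \cite{L18}.
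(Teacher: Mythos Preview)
Your arguments for unitarity and pairwise distinctness via Theorem \ref{GLS1} match the paper's. The self-duality step, however, diverges and has a gap. You assert that $L^S(s,\Pi,\wedge^2\rho_{2n})$ is holomorphic at $s=1$ by ``the orthogonal nature of $\Pi$'', but this is exactly what needs proof: $\Pi$ is not cuspidal, so the Langlands--Shahidi $r_2$-theory does not apply to it directly, and in your own multiplicative expansion each putative $2$-cycle $\{i,j\}$ of the involution already contributes a pole through $L^S(s,\Pi_i\times\Pi_j)$ with $\Pi_j\cong\Tilde\Pi_i$, so holomorphy of the left-hand side cannot be read off the right. Identifying $L^S(s,\Pi,\wedge^2)$ with $L^S(s,\pi,\wedge^2\circ\rho_{2n}^*)$---an adjoint-type $L$-function for $\pi$---is formally correct at unramified places, but its holomorphy at $s=1$ is not among the results supplied by \cite{HLsym} or \cite{L18} and would require a separate argument of comparable depth.

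The paper's proof stays entirely within the $r_1$ framework already built. One checks that $\Tilde w_0$ acts on the Levi $\gb{GL}_{N_i}\times\gb{SO}_{2n}^*$ by $(g_1,g_2)\mapsto({}^tg_1^{-1},g_2)$, so for $\sigma=\Tilde\Pi_i\otimes\Tilde\pi$ one has $\Tilde w_0(\sigma)\cong\sigma$ if and only if $\Pi_i$ is self-dual. If it is not, Proposition \ref{Lpoly} (with $S'=\emptyset$, so only the hypothesis $\Tilde w_0(\sigma)\not\cong\sigma$ is invoked) gives that $L^S(s,\pi\times\Tilde\Pi_i)$ is holomorphic on $\operatorname{Re}(s)\geq 1/2$; but this equals $\prod_jL^S(s,\Pi_j\times\Tilde\Pi_i)$, which has a pole at $s=1$ from the term $j=i$. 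This contradiction yields self-duality directly, with no appeal to $\operatorname{Sym}^2$ or $\wedge^2$.
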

\begin{proof} 

The existence of the transfer of $\pi$  and its central character is Theorem \ref{thm:weak}. We now show the properties of $\Pi_i$. Let $S$ be a finite set of $|F|$ such that $\pi$ is unramified outside of $S$.

(\textit{Unitarity}). We  write $\Pi_i=|\det|^{n_i}\Pi_i'$, where $\Pi_i'$ is unitary for  every $1\leq i \leq d$ and $n_d\geq\cdots\geq n_1$. Given that the central character of $\Pi$ is unitary, we have that  $n_1\leq 0$.
By  \eqref{unr}  and the multiplicativity property of Rankin-Selberg $L$-functions we have
\begin{align*}
L^S(s,\pi \times \Tilde \Pi_1)=L^S(s,\Pi\times \Tilde{\Pi}_1')&=\prod_j L^S(s,\Pi_j\times \Tilde{\Pi}_1')\\
&=\prod_jL^S(s+n_j,\Pi_j'\times\Tilde{\Pi}_1').
\end{align*}
Since the left hand side has at most a pole at $s=1$ and it is holomorphic and non-vanishing for $\operatorname{Re}(s)>1$ by Theorem \ref{GLS1}, we must have that $n_1=0$. Recursively we can check that $n_i=0$ for all $i$. Thus $\Pi_i$ is unitary for all $i$.

As a consequence we have that $\Pi$ is equal to the isobaric sum of the $\Pi$'s, as each $\Pi_i$ is unitary and thus $\Pi$ is the full induced representation. Moreover, as the $\Pi_i$'s are globally generic then $\Pi$ is also globally generic.

(\textit{Distinct}). As before we consider
\begin{align*}
L^S(s,\pi\times\Tilde \Pi_i )=L^S(s,\Pi\times \Tilde{\Pi}_i)&=\prod_j L^S(s,\Pi_j\times \Tilde{\Pi}_i)\\
&=\prod_jL^S(s,\Pi_j\times\Tilde{\Pi}_i)
\end{align*}
Arguing as above, we must have $\Pi_i\not \cong \Pi_j$ for $i\neq j$, because otherwise the right hand side would not have a simple pole \cite[II;  (3.6)] {RS}. 

(\textit{Self-dual}).  First observe that linear map $\Tilde w_0$  of $Q_{F,n+m}$ (See Section \ref{subsec:SO*}), given by $\Tilde w_0e_i=e_{2(n+m)-(n-i)}$ for $1\leq i \leq n$, $\Tilde w_0e_i=e_i$ for $n+1\leq i \leq 2m+n$, trivial on $l$ and $\Tilde w_0e_i=e_{i-n-2m}$ for $n+2m+1\leq i\leq 2n+2m$ is in $\gb{SO}^*_{2n+2m}(F)$ and is a representative of $w_0=w_{l,\gb G}w_{l,\gb M}\in W^{\gb G}$, where $\gb M\cong \gb{GL}_m\times \gb{SO}_{2n}^*$. The action of $\Tilde w_0$ on $(g_1,g_2)\in \gb M(\mathbb A_F)$ is $({}^tg^{-1}_1,g_2)$. Furthermore, $\Tilde w_0(\sigma)=\Tilde{\Pi_i}\otimes  \Tilde \pi$. Assume that $\Pi_i$ not selfdual. In that we would have $\sigma\not \cong \Tilde w_0(\sigma)$. In that case, Corollary \ref{Lpoly} implies that  the left hand side

\begin{align*}
L^S(s,\pi  \times \Tilde \Pi_i)=L^S(s,\Pi\times \Tilde{\Pi}_i)&=\prod_j L^S(s,\Pi_j\times \Tilde{\Pi}_i)\\
&=\prod_jL^S(s,\Pi_j\times\Tilde{\Pi}_i)
\end{align*}
is holomorphic on $\operatorname{Re}(s)>1/2$. But the right hand side has a pole coming from $L(s,\Pi_i\times\Tilde{\Pi}_i)$ \cite[II; (3.6)]{RS}. A contradiction, thus the $\Pi_i$'s are self-dual.
\end{proof}

\begin{rmk}
We can compare this construction with the following construction that uses a combination of recent results of V. Lafforgue and the already used result of L. Lafforgue \cite{L,VL}. More precisely, given a cuspidal automorphic representation $\pi$ of ${\bf SO}_{2n}^*(\mathbb A_F)$, we get a  Langlands parameter $\phi: \Gamma_F \to {}^LG(\overline{\mathbb Q}_\ell)$ of ${\bf SO}_{2n}^*$. After composing with $\rho_{2n}^*$ (the $\ell$-adic version), we get a (semisimple) $2n$-dimensional Galois representation $V$ \cite{VL}. We let $V_l$ for $1\leq l \leq e$ be its irreducible subrepresentations. Every irreducible representation $V_l$  corresponds to an irreducible cuspidal automorphic representation $\Pi_{L,i}$ of ${\bf  GL}_{M_l}(\mathbb A_F)$ \cite[Th\'eor\`eme VI.9]{L}. We consider the parabolically induced representation 
\[i_{{\bf Q}(\mathbb A_F)}^{{\bf GL}_{2n}(\mathbb A_F)}(\Pi_{L,1}\otimes \cdots \otimes \Pi_{L,e}).\]
Every irreducible subquotient of this representation is a lift.  

If $\pi$ is globally generic, we will compare the construction above with the construction $\Pi=\Pi_1 \boxplus \cdots  \boxplus\Pi_d$, from Theorem \ref{thm:image}.  Since the representations 
\[i_{{\bf Q}(\mathbb A_F)}^{{\bf GL}_{2n}(\mathbb A_F)}(\Pi_{L,1}\otimes \cdots \otimes \Pi_{L,e}) \quad  \& \quad i_{{\bf P}(\mathbb A_F)}^{{\bf GL}_{2n}(\mathbb A_F)}(\Pi_{1}\otimes \cdots \otimes \Pi_{d})  \] have the same unramified component, we have that $d=e$ and that there exists a bijection $\sigma$ of $\{1,\dots,e\}$ such that $\Pi_i\cong \Pi_{\sigma(i),L}$ \cite[Theorem 4.4]{RS}. Therefore, as $\Pi_i$ is also unitary for every $1\leq i \leq d$ and thus the induced representation above is irreducible, we have that both constructions coincide. Finally, as the Langlands parameter is orthogonal, we obtain that the representation $\Pi$ is orthogonal.
\end{rmk}

We finish this section with a result about the compatibility between the $\gamma$-factors of the representation $\pi$ and its transfer $\Pi$.
\begin{thm} \label{gamlocal} Let $\Pi_x$ be the local component of a transfer as in Theorem \ref{thm:image} and  $\tau_x$  be an irreducible  generic   representation of $\gb{GL}_m(F_x)$. Then
\[\gamma(s, \pi_x\times \tau_x,\psi_x) =\gamma(s,\Pi_{x} \times \tau_x,\psi_x)\label{gammalocal}.\]
\end{thm}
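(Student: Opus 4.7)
My plan is a standard globalization-and-functional-equation argument. If $x_0\notin S$ (so $\pi_{x_0}$ is unramified), the first proposition of Section~\ref{subsec:locallift} handles the case when $\tau_{x_0}$ is unramified; I would extend this to an arbitrary generic $\tau_{x_0}$ by writing it as a Langlands quotient of induction from tempered data and invoking the multiplicativity \eqref{eq:mult1} of Langlands--Shahidi $\gamma$-factors together with the analogous multiplicativity of Rankin--Selberg $\gamma$-factors, reducing to the rank-one matchings already verified.

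For $x_0\in S$, I would globalize $\tau_{x_0}$ to a cuspidal automorphic representation $\tau$ of $\gb{GL}_m(\mathbb A_F)$ using a positive-characteristic globalization theorem, and then, by class field theory \cite[X, Theorem 5]{ATCFT}, choose a Hecke character $\eta$ of $F^\times\backslash\mathbb A_F^\times$ such that $\eta_{x_0}=1$ while $\eta_x$ is sufficiently ramified at every $x\in (S\cup T)\setminus\{x_0\}$, where $T$ is the ramification locus of $\tau$. Since $\tau\cdot\eta\in\mathcal T(S;\eta)$, Corollary~\ref{ShRS} applied to this twist yields
\[L(s,\pi\times(\tau\cdot\eta))=L(s,\Pi\times(\tau\cdot\eta)),\quad \varepsilon(s,\pi\times(\tau\cdot\eta))=\varepsilon(s,\Pi\times(\tau\cdot\eta)),\]
and hence the global identity
\[\prod_x\gamma(s,\pi_x\times(\tau_x\cdot\eta_x),\psi_x)=\prod_x\gamma(s,\Pi_x\times(\tau_x\cdot\eta_x),\psi_x).\]

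I would then extract the factor at $x_0$ place-by-place. At $x\notin S$ the local $\gamma$-factors agree by the extended unramified proposition above; at $x\in S\setminus\{x_0\}$ they agree by the second proposition of Section~\ref{subsec:locallift} combined with Rankin--Selberg stability (analogous to Theorem~\ref{sthm}), since the sufficiently ramified twist $\eta_x$ reduces both $\gamma(s,\pi_x\times(\tau_x\cdot\eta_x),\psi_x)$ and $\gamma(s,\Pi_x\times(\tau_x\cdot\eta_x),\psi_x)$ to functions of their central characters, which coincide by~\eqref{central}. At $x_0$, the choice $\eta_{x_0}=1$ leaves the ratio $\gamma(s,\pi_{x_0}\times\tau_{x_0},\psi_{x_0})/\gamma(s,\Pi_{x_0}\times\tau_{x_0},\psi_{x_0})$, which must therefore equal $1$.

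The hard part will be the step at $x\in S\setminus\{x_0\}$: the second proposition of Section~\ref{subsec:locallift} compares $\pi_x$ to the local \emph{candidate} lift of Section~\ref{subsec:candidate}, whereas the theorem concerns the local component of the global transfer supplied by the converse theorem, and a priori these need not coincide at ramified places. Bridging them requires Rankin--Selberg stability together with the fact that both representations share the central character~\eqref{central}, so that after the sufficiently ramified twist they produce the same $\gamma$-factor. A subordinate technical difficulty is arranging the cuspidal globalization of $\tau_{x_0}$ with the required ramification control at the remaining places of $S$.
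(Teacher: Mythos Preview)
Your overall strategy---globalize $\tau_{x_0}$, twist by a highly ramified Hecke character trivial at $x_0$, compare global identities, and strip off the auxiliary places---is exactly the paper's approach, and you have correctly identified the delicate point at $x\in S\setminus\{x_0\}$ together with its resolution via stability and the matching of central characters \eqref{central}. Two points need repair, however.

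First, the appeal to Corollary~\ref{ShRS} is not legitimate. That corollary applies only to $\tau'\in\mathcal T(S;\eta)$, which by definition means $\tau'=\tau''\cdot\eta$ with $\tau''$ \emph{unramified at every place of $S$}. Since $x_0\in S$ and $\tau_{x_0}$ is the given local representation (in general ramified), your globalization $\tau$ does not lie in $\mathcal T^S(m)$, so $\tau\cdot\eta\notin\mathcal T(S;\eta)$. The paper avoids this by comparing the two \emph{functional equations} directly: the Langlands--Shahidi functional equation for $\pi\times(\tau\cdot\eta)$ and the Rankin--Selberg functional equation for $\Pi\times(\tau\cdot\eta)$ both express the same partial $L$-function $L^S(s,\,\cdot\,)$ (the local factors agree for $x\notin S$ by \eqref{unr}) in terms of a product of $\gamma$-factors over $S$ times $L^S(1-s,\,\widetilde{\cdot}\,)$. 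Equating them gives $\prod_{x\in S}\gamma(s,\pi_x\times(\tau_x\cdot\eta_x),\psi_x)=\prod_{x\in S}\gamma(s,\Pi_x\times(\tau_x\cdot\eta_x),\psi_x)$, after which your place-by-place extraction goes through.

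Second, a generic $\tau_{x_0}$ cannot in general be globalized to a cuspidal automorphic representation; the globalization lemma used (\cite[Lemma~3.1]{L18}) requires $\tau_{x_0}$ supercuspidal. The paper therefore first proves the identity for supercuspidal $\tau_{x_0}$, and only afterwards passes to general generic $\tau_{x_0}$ by writing it as a subquotient of an induction from supercuspidals and applying multiplicativity \eqref{eq:mult1} on both sides. Your proposal should be reordered accordingly. (Incidentally, for $x_0\notin S$ no extension argument is needed: the first proposition of Section~\ref{subsec:locallift} already treats arbitrary generic $\tau_x$.)
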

\begin{proof}
We first note that this is true when $\pi_x$ is unramified, i.e when $x\not \in S$ \eqref{unr}.

Let us fix $x_0\in |F|$  and suppose first that $\tau_{x_0}$ is cuspidal.  Then there is a cuspidal automorphic representation $\tau=\bigotimes' \tau_x$ of $\gb{GL}_m(\mathbb A_F)$ that is $\tau_{x_0}$ at $x_0$ and such that $\tau_x$ is unramified for $x\not \in S$ \cite[Lemma 3.1]{L18}. Furthermore, thanks to the Grunwald-Wang theorem \cite[X, Theorem 5]{ATCFT} and Section \ref{subsec:locallift} (central character of $\Pi_x$ is $\varkappa_x$), we can choose a character $\eta=\otimes \eta_x$ such that $\eta_x$ is sufficiently ramified for $x\in S$ and $x\neq x_0$ so that 
\[\gamma(s, \pi_x\times (\tau_x\cdot\eta_x),\psi_x)=\gamma(s,\Pi_x\times (\tau_x\cdot \eta_x),\psi_x)\]
and $\eta_{x_0}=1$.

On the other hand, the Langlands-Shahidi  functional equation \cite[Theorem 5.1 (vi)]{L15}  gives us that
\begin{align*}
L^S(s, \pi\times (\tau\cdot\eta))=\gamma(s&, \pi_{x_0}\times (\tau_{x_0}\cdot\eta_{x_0}),\psi_{x_0}) \\
&\prod_{x\in S-\{x_0\}}\gamma(s,\pi_x\times (\tau_x\cdot\eta_x),\psi_x)  L^S(1-s,\Tilde \pi\times (\Tilde\tau\cdot\Tilde \eta)).    
\end{align*}
Similarly for the Rankin-Selberg $L$-functions
\[L^S(s,\Pi \times (\tau\cdot \eta))=\gamma(s,\Pi_{x_0} \times \tau_{x_0} ,\psi_{x_0})  \prod_{x\in S-\{x_0\}}\gamma(s,\Pi_x \times (\tau_x\cdot \eta_x),\psi_x)  L^S(1-s,\Tilde \Pi \times (\widetilde{\tau}\cdot \Tilde\eta)).\]
Thus, after simplifying we get 
\[\gamma(s, \pi_{x_0}\times \tau_{x_0},\psi_{x_0}) =\gamma(s,\Pi_{x_0} \times \tau_{x_0},\psi_{x_0}),\]
obtaining thus the relation for the cuspidal representation $\tau_x$. 

The representation $\tau_x$ can be expressed as subquotient  of 
    \[i_{\gb P(F_x)}^{\gb{GL}_m(F_x)}(\rho_{1}\otimes \cdots  \otimes
\rho_d), \]
where $\gb P$ is a parabolic subgroup containing the Borel subgroup of $\gb{GL}_m$ consisting of upper triangular matrices and the $\rho_i$'s are generic cuspidal representations of $\gb{GL}_{m_i}(F_x)$. Using the multiplicativity property of $\gamma$-factors \eqref{eq:mult1} we get
\begin{align*}
\gamma(s, \pi_{x}\times \tau_{x},\psi_x)=&
\prod_{i=1}^d \gamma(s, \pi_x \times \rho_{i} ,\psi_x).
\end{align*}
 Similarly,
  \begin{align*}
\gamma(s,\Pi_x\times \tau_x,\psi_x)=&
\prod_{i=1}^d \gamma(s,\Pi_x\times\rho_i,\psi_x).
\end{align*}
 As we know the desired relation when the representation is cuspidal, we obtain
\begin{align*}
\gamma(s, \pi_x\times \tau_x,\psi_x)&=\prod_{i=1}^{d} \gamma(s, \pi_x \times \rho_i  ,\psi_x)\\
&=\prod_{k=1}^{d} \gamma(s,\Pi_x\times \rho_i,\psi_x)\\
&=\gamma(s,\Pi_x\times \tau_x,\psi_x).
\end{align*}
\end{proof}
\begin{rmk} \label{rmk:Lift}\begin{itemize}
    \item 
 First, we remark that $\Pi$ only depends on $\pi$, thanks to the multiplicity one result for isobaric sums \cite{RS}. We also note that, combining Theorem \ref{gammalocal} and \cite[Th\'eor\`eme 1.1]{GHcar}, we can prove that $\Pi_x$ only depends on $\pi_x$. Finally, we also mention that, conjecturally, the image  is characterised by the condition in the Theorem \ref{thm:image} and the fact that $L^T(s,\Pi_i,\operatorname{Sym}^2)$ has a pole at $s=1$ for any sufficiently large finite set  of  places $T$ containing all archimedean places. This is established in the work of Cogdell, Piatetski-Shapiro and  Shahidi over number fields \cite{Cogqsplit}. 
\item We expect a relation between $L$-functions and $\varepsilon$-factors, that is similar to the one between $\gamma$-factors in Theorem \ref{gammalocal}. In positive characteristic, we have these relations established for the split classical groups \cite{L09}. 
\end{itemize}
\end{rmk}

% \begin{pro}\label{Llocal}Assume Property $(T)$ from Section \ref{locon} at every ramified place. Let $\Pi_x\coloneqq T_\rho(\pi)_x$ and $\tau_x$ be an irreducible  generic unitary  representation of $\gb{GL}_m(F_x)$. Then, 
%\begin{equation}
 %\label{uni}
 %\begin{gathered}
 %L(s, \pi_{x}\times \tau_{x},\psi_x)=L(s,\Pi_x\times\tau_x )\\
 %\varepsilon(s, \pi_{x}\times \tau_{x},\psi_x)=\varepsilon(s,\Pi_x\times\tau_x,\psi_x ).
 %\end{gathered}
 %\end{equation}
 %\end{pro}
 
% \begin{thm}. Let $\pi=\otimes \pi_x$ be a globally  generic cuspidal representation of $\gb{SO}_{2n}^*(\mathbb A_F)$ and $\Pi=\otimes \Pi_x \coloneqq T_\rho(\pi)$  its transfer to $\gb{GL}_{2n}(\mathbb A_F)$. Let $x\in |F|$ and assume Property $(T)$ from Section \ref{locon} at every ramified place. Then  for $\tau_x$ an irreducible  generic unitary  representation of $\gb{GL}_m(F_x)$
%\begin{align*}
 %   \gamma(s,\pi_{x}\times \tau_{x},\psi_x) &=\gamma(s,\Pi_{x} \times \tau_{x},\psi_x)\\
  %  L(s,\pi_{x}\times \tau_{x})&=L(s,\Pi_x\times\tau_x )\\
 %\varepsilon(s,\pi_{x}\times \tau_{x},\psi_x)&=\varepsilon(s,\Pi_x\times\tau_x, \psi_x).
%\end{align*}
%\end{thm}
\subsection{Unramified Ramanujan conjecture}
\begin{thm}\label{rama}
 Let $\pi=\otimes_x \pi_x$ be a globally generic cuspidal representation of $\gb{SO}_{2n}^*(\mathbb A_F)$.  If $\pi_x$ is unramified, then its Satake parameter has absolute value 1.
\end{thm}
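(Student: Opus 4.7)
The plan is to deduce the Ramanujan bound for $\pi_x$ from L.~Lafforgue's Ramanujan conjecture for general linear groups over function fields \cite[Th\'eor\`eme VI.10]{L}, applied to the cuspidal constituents of the functorial transfer constructed in Theorem \ref{thm:image}. Once the transfer is in hand, the statement is essentially a formal consequence: the Satake parameters of $\pi_x$ are encoded in those of $\Pi_x$ via $\rho_{2n,x}^*$, and the latter inherit the Ramanujan bound from the cuspidal components $\Pi_i$.

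Concretely, I would first invoke Theorem \ref{thm:image} to produce a transfer $\Pi=\Pi_1 \boxplus \cdots \boxplus \Pi_d$, where each $\Pi_i$ is a unitary self-dual cuspidal automorphic representation of $\gb{GL}_{N_i}(\mathbb A_F)$. By Lafforgue's theorem, each $\Pi_i$ is tempered at every place; in particular, at any unramified place, the eigenvalues of the Satake parameter of $\Pi_{i,x}$ have absolute value $1$. Since $\Pi$ is the full parabolic induction from $\Pi_1 \otimes \cdots \otimes \Pi_d$ (the induced representation being irreducible because every $\Pi_i$ is unitary, as observed in the proof of Theorem \ref{thm:image}), the local component $\Pi_x$ at a place where it is unramified is the unramified constituent of the local induction, so its Satake parameter is the concatenation of those of the $\Pi_{i,x}$'s, and hence all its eigenvalues have absolute value $1$.

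Next, at an unramified place $x$ of $\pi$, Theorem \ref{thm:weak} ensures that $\Pi_x$ is unramified and its Satake parameter is $\rho_{2n,x}^* \circ \phi_{\pi_x}$. Using the explicit description of the candidate lift in Section \ref{subsec:description}, in particular \eqref{L-hom} and \eqref{liftu}, the multiset of Satake eigenvalues of $\Pi_x$ is obtained from the Satake parameter $(\chi_{1,x},\ldots,\chi_{n-1,x},\chi_{n,x})$ of $\pi_x$ by listing $\chi_{1,x}(\varpi_x),\ldots,\chi_{n-1,x}(\varpi_x)$ together with their inverses and a contribution from the $\gb{SO}_2^*$-factor (which is a fixed character of absolute value $1$, being unitary in the unramified case). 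Since each entry of this multiset has absolute value $1$ by the previous step, so do all the $\chi_{i,x}(\varpi_x)$, which are exactly the Satake parameters of $\pi_x$.

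The main obstacle lies entirely upstream, in the construction of the lift: once Theorems \ref{thm:weak} and \ref{thm:image} are established---which is the bulk of this article---the Ramanujan result above is a direct corollary of Lafforgue's deep theorem. The only delicate point in the reduction itself is the bookkeeping identifying the Satake parameters of $\pi_x$ with a sub-multiset of those of $\Pi_x$, which is clear from the explicit form of $\rho_{2n}^*$ combined with Section \ref{subsec:description}.
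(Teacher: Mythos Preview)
Your proposal is correct and follows essentially the same approach as the paper: invoke the functorial transfer of Theorem \ref{thm:image}, apply L.~Lafforgue's Ramanujan theorem to the cuspidal constituents $\Pi_i$, and read off the bound on the Satake parameters of $\pi_x$ from the explicit match $\phi_{\Pi_x}=\rho_{2n,x}^*\circ\phi_{\pi_x}$. The paper's proof is slightly terser (writing the Satake class as $\operatorname{diag}(\alpha_1,\dots,\alpha_{n-1},1)\rtimes\operatorname{Fr}_x$ and separating inert and split places, referring to \cite{L09} for the latter), but the substance is identical.
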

\begin{proof}
Let us fix $x\in |F|$ an inert place. The split case is obtained as in \cite[Theorem 9.14]{L09}. Using Theorem \ref{thm:image}, we have can find a transfer $\Pi$ of $\pi$ that is the isobaric sum
\[\Pi=\Pi_1 \boxplus \cdots  \boxplus\Pi_e,\]
where each $\Pi_i$ is a unitary self-dual cuspidal automorphic representation of $\gb{GL}_{N_i}(\mathbb A_F)$, and $\Pi_i\not\cong \Pi_j$ for $i\neq j$. By \cite[Th\'eor\`eme VI.10]{L}, each $\Pi_{i,x}$ is tempered.

If $\pi_x$ is unramified, let 
\[\operatorname{diag}(\alpha_1,\cdots,\alpha_{n-1},1)\rtimes \operatorname{Fr}_x\]
be its semisimple conjugacy class.
Then, by definition, the semisimple conjugacy class of $\Pi_x$ is given by
\[\operatorname{diag}(\alpha_1,\cdots,\alpha_{n-1},1,1,\alpha_{n-1}^{-1},\cdots,\alpha_{1}^{-1}).\]
 Every  $\alpha_j$ or $\alpha_j^{-1}$ is the Satake parameter of one the representations $\Pi_{i,x}$. As every $\Pi_{i,x}$ is tempered, we must have
 \[|\alpha_i|=1.\]

\begin{rmk}
If one proves the relation between $L$-functions, mentioned in Remark \ref{rmk:Lift}, then we expect to prove that  $\pi_x$ is tempered for every $x\in |F|$.
\end{rmk}

%To prove the general case, we argue by contradiction. Using Langlands classification, we can write $\pi_x$ as the unique Langlands subquotient of  
%\[i_{\gb P(F_x)}^{\gb{SO}_{2n}^*(F_x)}(\pi_{1,x}\nu^{t_{1,x}}\otimes\cdots \otimes \pi_{d,x}\nu^{t_{d,x }}\otimes \pi_{0,x})\]
%where $\gb P$ is a parabolic subgroup containing $\gb P_0$ (See Section \ref{SO*}), $\pi_{j,x},\pi_{0,x}$ are tempered representations of $\gb{GL}_{n_j}(F_x)$ and $\gb{SO}^*_{2n_0}(F_x)$ and $0< t_{1,x}\leq \cdots \leq t_{d,x}$.

%On the other hand,  let us fix $j\in \mathbb N$, such that $1\leq j \leq d$. After applying the multiplicativity of Rankin-Selberg and Langlands-Shahidi $L$-functions (Section \ref{locon}, property ix) and the relation  \ref{uni}, we get
%\begin{align*}
%\prod_{i=1}^eL(s,\Pi_{i,x}\times\Tilde\pi_{j,x})&=L(s,\Pi_x\times\Tilde\pi_{j,x})\\
%&=L(s,\pi_x\times \Tilde\pi_{j,x})  \\
%&=L(s,\pi_0\times\Tilde\pi_{j,x})\prod_{i=1}^d  L(s+t_{i,x},\pi_{i,x}\times\Tilde\pi_{j,x})L(s-t_{i,x},\pi_{i,x}\times\Tilde\pi_{j,x}).    
%\end{align*}
%Then the $L$-functions $L(s-t_{j,x},\pi_{x,j}\times\Tilde \pi_{x,j})$ on the right hand side have a pole at $t_{j,x}$. But the factors $L$-functions $L(s,\Pi_{i,x}\times \Tilde\tau_{j,x})$ on the left hand side  are holomorphic on $\operatorname{Re}(s)>0$. This is possible only if $t_{j,x}=0$, a contradiction. 
\end{proof}


\begin{thebibliography}{9}
\bibitem{ATCFT} Emil Artin and John Tate. \textit{Class Field Theory}. AMS Chelsea Publishing, V. 366, Primary 11 (2008)

\bibitem{BZ}
Joseph Bernstein and Andrei Zelevinsky. \textit{Induced representations of reductive p-adic groups I}. Annales Scientifiques de l'École Normale Supérieure, Série 4, 10 (4), pp. 441–472 (1977)


\bibitem{BI} 
Armand Borel. \textit{Admissible representations of a semi-simple group over a local field with vectors fixed under an iwahori subgroup}. Invent Math 35, pp. 233–259 (1976)

\bibitem{CorB} 
Armand Borel. 
\textit{Automorphic L-functions}. Proc. Symp. Pure Math. 33 (Part 2), pp. 27-61 (1979)

\bibitem{CorBJ}
Armand Borel and Herv\'e Jacquet. 
\textit{Automorphic forms and Automorphic representations}. Proc. Symp. Pure Math. 33 (Part 1), pp. 189-202 (1979)

%\bibitem{BrbkiL}Nicolas Bourbaki. \textit{Groupes et algèbres de Lie, Chapitres 4 \`a 6}. Springer-Verlag Berlin Heidelberg (2007)

%\bibitem{BK}Colin J. Bushnell and Philip C. Kutzko. \textit{Smooth representations of reductive p-adic groups: structure theory via types}. Proceedings of the London Mathematical Society. Volume 77, Issue 3, pp. 582-634 (1998)

\bibitem{BHGL2}Colin J. Bushnell and Guy Henniart. \textit{The Local Langlands Conjecture for GL(2)}. Springer-Verlag Berlin Heidelberg (2006)

\bibitem{Cogcon} James W. Cogdell and Ilya I. Piatetski-Shapiro. \textit{Converse Theorems for $\operatorname{GL}_n$}. Publications Math\'ematiques de l'IH\'ES, Volume 79, pp. 157-214 (1994)


\bibitem{Cogclas} James W. Cogdell, Ilya I. Piatetski-Shapiro and Freydoon Shahidi. \textit{Functoriality for the classical groups}. Publications Mathématiques de l'IHÉS, Volume 99, pp. 163-233 (2004)

\bibitem{Cogqsplit} James W. Cogdell, Ilya I. Piatetski-Shapiro and Freydoon Shahidi. \textit{Functoriality for the Quasisplit Classical Groups}. On Certain L-Functions. Clay Mathematics Proceedings
Vol. 13, pp. 117-140 (2011)




\bibitem{Bgrp}
Brian Conrad. \textit{Reductive group schemes}.
Autour des Schémas en Groupes, École d’été "Schémas en Groupes", a celebration of SGA3. Volume I, pp. 93-440. Panoramas et Syntheses
Volume: 42-43 (2014)

%\bibitem{Bpsgrp}
%Brian Conrad, Gopal Prasad and Ofer Gabber
%\textit{Pseudo-reductive Groups}.
%Cambridge University Press (2015)

\bibitem{dCHL}H\'ector del Castillo, Guy Henniart and Luis Lomel\'i.   \textit{On three conjectures}. Work in progress (2021)

%\bibitem{D} Pierre Deligne. \textit{Les Constantes des Equations Fonctionnelles des Fonctions L}.  Modular Functions of One Variable II. Lecture Notes in Mathematics, vol 349, pp. 501-595 (1973) 




%\bibitem{Flath} Daniel Flath. \textit{Decomposition of representations into tensor products}.  Proc. Symp. Pure Math. 33 (Part 1), pp. 179-183 (1979)

\bibitem{GL} 
Wee Teck Gan and Luis Lomelí. 
\textit{Globalization of supercuspidal representations over function fields and applications}. Journal of the European Mathematical Society, 20, pp 2813-2858 (2018) 

%\bibitem{GrssLP} 
%Benedict H. Gross and Mark Reeder. \textit{Arithmetic invariants of discrete Langlands parameters}. Duke Math. J. Volume 154, pp. 431-508, Number 3 (2010)

\bibitem{Ha}Günter Harder. \textit{Chevalley Groups Over Function Fields and Automorphic Forms}. Annals of Mathematics
Second Series, Vol. 100, No. 2, pp. 249-306 (Sep., 1974)


\bibitem{H1}
Volker Heiermann. \textit{Décomposition spectrale et représentations spéciales d'un groupe réductif p-adique}, J. Inst. Math. Jussieu 3, No. 3, pp. 327-395  (2004)

\bibitem{HM} Volker Heiermann and Goran Muic. \textit{On the standard modules conjecture}. Mathematische Zeitschrift volume 255, pp. 847–853 (2007)

\bibitem{HO}
Volker Heiermann and Eric Opdam. \textit{On the tempered $L$-functions conjecture}. American Journal of Mathematics, Vol. 135, No. 3 , pp. 777-799, (June 2013)

\bibitem{GHcar} Guy Henniart. \textit{Caractérisation de la correspondance de Langlands locale par les facteurs $\varepsilon$ de paires}. Inventiones mathematicae
Vol. 113, No. 2, pp. 339-350 (1993)


\bibitem{HLsym} Guy Henniart and Luis Lomel\'i. \textit{Local-to-global extensions for GL(n) in non-zero characteristic: a characterization of $\gamma_F(s,\pi,\operatorname{Sym}^2,\psi)$ and $\gamma_F(s,\pi,\wedge^2,\psi)$}. American Journal of Mathematics
Vol. 133, No. 1, pp. 187-196 (February 2011)

\bibitem{HL} Guy Henniart and Luis Lomel\'i. \textit{Uniqueness of Rankin-Selberg products.} Journal of Number Theory 133 pp. 4024-4035 (2013) 

\bibitem{RS} Herv\'e Jacquet and Joseph A. Shalika. \textit{On Euler products and the classification of automorphic representation I and II}. American Journal of Mathematics 103, pp. 499-558 and pp. 777-815 (1981)

\bibitem{GL3II}Herv\'e Jacquet,  Ilya I. Piatetski-Shapiro and  Joseph A. Shalika. \textit{Automorphic Forms on GL(3) II.}  Annals of Mathematics, 109(2), second series, pp. 213-258 (1979)


\bibitem{RSc}Herv\'e Jacquet,  Ilya I. Piatetski-Shapiro and  Joseph A. Shalika. \textit{Rankin-Selberg convolutions.} Amer. J. Math. 105 pp. 367-464 (1983)



\bibitem{Kim1999}Henry H. Kim, \textit{Langlands-Shahidi method and poles of automorphic L-functions:
application to exterior square L-functions}. Canadian Journal of Mathematics
51, pp. 835-849 (1999)

\bibitem{Kim2000} Henry H. Kim. \textit{Langlands-Shahidi method and poles of automorphic $L$-functions II}. Irs. J. Math Volumen 117, pp. 261-284 (2000)

\bibitem{Kim2005} Henry H. Kim. \textit{On Local $L$-functions and Normalized Intertwining Operators}. Canadian Journal of Mathematics 57 (3), pp. 535-597 (2005)

\bibitem{HK} Henry H. Kim and Wook Kim. \textit{On Local L-Functions and Normalized Intertwining Operators II; Quasi-Split
Groups}. On Certain L-Functions. Clay Mathematics Proceedings
Vol. 13, pp. 265-295 (2011)


\bibitem{KimSha} Henry H. Kim and Freydoon Shahidi. \textit{Functorial products for $\gb{GL}_2\times\gb{GL}_3$ and functorial symmetric cube for $\gb{GL}_2$}. Annals of Mathematics
Second Series, Vol. 155, No. 3, pp. 837-893  (May, 2002)

\bibitem{Kn} Max-Albert Knus. \textit{Quadratic and Hermitian Forms Over Rings}. Springer-Verlag Berlin Heidelberg (1991)

\bibitem{KS} Robert E. Kottwitz  and  Diana Shelstad. \textit{Foundations of twisted endoscopy}. Astérisque, no. 255 (1999)
 

\bibitem{L} 
Laurent Lafforgue.
\textit{Chtoucas de Drinfeld et correspondance de Langlands}.
Inventiones mathematicae 147 (1), pp. 1-242, (2002)

\bibitem{VL} 
Vincent Lafforgue.
\textit{Chtoucas pour les groupes réductifs et paramétrisation de Langlands globale}.
 Amer. Math. Soc. 31 pp. 719-891 (2018)




\bibitem{RLbaric} Robert P. Langlands. \textit{Automorphic Representation, Shimura Varieties, and Motives. Ein Marchen}. Proc.Symp. Pure Math. 33 (Part 2), pp. 205-246 (1979) 

\bibitem{RLiso} Robert P. Langlands. \textit{On the notion of an automorphic representation}. Proc.Symp. Pure Math. 33 (Part 1), pp. 203-207 (1979)  

\bibitem{RLab} 
Robert P. Langlands.
\textit{Representations of Abelian Algebraic Groups}.  Pacific J. Math., (Special Issue): Vol. 181, No. 3, pp. 231–250. Olga Taussky-Todd: In Memorium (1997)



%\bibitem{Li}Jian-Shu Li. \textit{Some results on the unramified principal series of p-adic groups}. Mathematische Annalen 292.4 pp. 747-761 (1992)


\bibitem{L09} Luis Lomel\'i. \textit{Functoriality for the Classical Groups over Function Fields}. International Mathematics Research Notices, Volume 2009, Issue 22, pp. 4271–4335 (2009)


\bibitem{L15} Luis Lomelí. 
\textit{The Langlands-Shahidi method over function fields: Ramanujan Conjecture and Riemann Hypothesis for the unitary groups}. arXiv:1507.03625 (2015)

\bibitem{L18} 
Luis Lomelí. 
\textit{Rationality and holomorphy of Langlands–Shahidi L-functions over function fields}. 
Mathematische Zeitschrift 291 (1-2), pp. 711-739 (2019)

\bibitem{CW}Colette Moeglin and Jean-Loup Waldspurger. \textit{Spectral Decomposition and Eisentein Series}. Cambridge University Press (1995)

\bibitem{MWspectre} Colette Moeglin and Jean-Loup Waldspurger. \textit{Le spectre résiduel de GL(n)}. Annales scientifiques de l'École Normale Supérieure, Serie 4, Volume 22  no. 4, p. 605-674 (1989)


\bibitem{M}Goran Muic. \textit{A Proof of Casselman-Shahidi's Conjecture for Quasi-split Classical Groups}. Canadian Mathematical Bulletin 44 pp. 298-312 (2001)








\bibitem{Neu} Jürgen Neukirch. \textit{Algebraic Number Theory}. Springer-Verlag Berlin Heidelberg (1999)

\bibitem{PSmult1}
Ilya Piatetski-Shapiro. \textit{Multiciplity one Theorems}. Proc. Symp. Pure Math. 33 (Part 1), pp. 209-212 (1979)








%\bibitem{DRenard} David Renard. \textit{Représentations des groupes réductifs p-adiques}. Société Mathématique de France, Volume 17 de Cours spécialisés (2010)

\bibitem{ShL} 
Freydoon Shahidi. \textit{On Certain L-Functions}. American Journal of Mathematics
Vol. 103, No. 2, pp. 297-355 (1981)

\bibitem{Shram} 
Freydoon Shahidi. \textit{On the Ramanujan Conjecture and Finiteness of Poles for Certain L-Functions}. The Annals of Mathematics, Second Series, Vol. 127, Issue 3, pp. 547-585 (1988)


\bibitem{Shplan} 
Freydoon Shahidi. \textit{A Proof of Langlands' Conjecture on Plancherel Measures; Complementary Series of $\mathfrak{p}$-adic groups}. Annals of Mathematics
Second Series, Vol. 132, No. 2, pp. 273-330 (1990)

\bibitem{Shtwist}Freydoon Shahidi. \textit{Twists of a General Class of L-Functions by Highly Ramified Characters}. Canadian Mathematical Bulletin 43(3) pp. 380-384 (2000)

\bibitem{Shal} Joseph A. Shalika. \textit{The multiplicity one theorem for $\operatorname{GL}(n)$}. Ann. Math. V. 100, pp. 171-193 (1974)


\bibitem{Si} Allan J. Silberger. \textit{Introduction to Harmonic Analysis on Reductive p-adic Groups. (MN-23): based on lectures by Harish-Chandra at the Institute for Advanced Study, 1971-73}. Princeton University Press (1979)

\bibitem{Sdry}
David Soudry, \textit{On Langlands functoriality from classical groups to $\operatorname{GL}_n$}. Formes Automorphes (I), Ast\'erisque 298, pp. 335-390 (2005)

%\bibitem{CorSpr}
%Tonny A. Springer. \textit{Reductive Groups}. Proc. Symp. Pure Math. 33 (Part 1), pp. 3-27 (1979)

\bibitem{Tad} Marko Tadi\'c. \textit{ Classification of unitary representations in irreducible representations of general linear group (non-Archimedean case)}. Annales scientifiques de l'École Normale Supérieure, Serie 4, Volume 19 no. 3, p. 335-382 (1986) 

%\bibitem{CorT}
%John Tate. \textit{Number Theoretic Background}. Proc. Symp. Pure Math. 33 (Part 2), pp. 3-26 (1979)

\bibitem{Wplan}
Jean-Loup Waldspurger. \textit{La Formule de Plancherel pour les Groupes $p$-adiques. D'après Harish-Chandra}. Journal of the Inst. of Math. Jussieu {\bfseries 2}(2) pp. 235-333 (2003)

%\bibitem{Ze} Andrei V. Zelevinsky, \textit{Induced representations of reductive p-adic groups. II. On irreducible representations of GL(n)}. Annales scientifiques de l'École Normale Supérieure, Serie 4, Volume 13  no. 2, pp. 165-210 (1980)

\bibitem{Zh}Yuanli Zhang. \textit{The holomorphy and nonvanishing of normalized local intertwining operators}. Vol. 180, No. 2, pp. 385–398 (1997)


\end{thebibliography}
\end{document}